\documentclass[NoFloatCountersInSection]{cedram-aif}

\usepackage[T1]{fontenc}
\usepackage[utf8]{inputenc}
\usepackage[english]{babel}
\usepackage[fixlanguage]{babelbib}
\usepackage{cite}
\usepackage{yfonts}
\usepackage{mathrsfs}
\usepackage{colonequals}
\usepackage{enumerate}
\usepackage{tikz}

\usepackage[all,pdf]{xy} 
\SelectTips{cm}{11} 

\usepackage{hyperref}



\newcommand{\cU}{\ensuremath{\mathcal{U}}}
\newcommand{\cV}{\ensuremath{\mathcal{V}}}

\newcommand{\dd}{\mathop{}\!\mathrm{d}}

\newcommand{\varep}{\varepsilon}

\newcommand{\R}{\ensuremath{\mathbb{R}}}
\newcommand{\N}{\ensuremath{\mathbb{N}}}

\DeclareMathOperator{\id}{id}
\DeclareMathOperator{\pr}{pr}
\DeclareMathOperator{\supp}{supp}
\DeclareMathOperator{\ev}{ev}
\DeclareMathOperator{\res}{res}
\DeclareMathOperator{\proj}{proj}
\DeclareMathOperator{\Tay}{Tay}
\DeclareMathOperator{\im}{im}
\newcommand{\Frechet}{Fr\'{e}chet }

\newcommand{\coloneq}{\colonequals}

\equalenv{proposition}{prop}
\equalenv{corollary}{coro}
\equalenv{remark}{rema}
\equalenv{definition}{defi}
\equalenv{example}{exem}
\newtheorem*{problemone}{Problem 1}
\newtheorem*{problemtwo}{Problem 2}
\newtheorem*{example_nonumber}{Example}

\title
[Extending Whitney's extension theorem]
{Extending Whitney's extension theorem: \\ nonlinear function spaces}

\alttitle
{Extension du théorème d'extension de Whitney: espaces de fonctions non linéaires}

\author{\firstname{David} \middlename{Michael} \lastname{Roberts}}

\address{School of Mathematical Sciences\\
The University of Adelaide\\
North Terrace\\
Adelaide SA 5005 (Australia)}


\email{david.roberts@adelaide.edu}


\thanks{DMR is supported by the Australian Research Council's \emph{Discovery Projects} funding scheme (grant number DP180100383), funded by the Australian Government. AS was supported by the Einstein foundation Berlin.}

\author{\firstname{Alexander} \lastname{Schmeding}}
\address{Department of Mathematics\\
University of Bergen\\ 
P.O. Box 7803\\
5020 Bergen (Norway)}
\email{alexander.schmeding@uib.no}

\keywords{Whitney extension theorem, smooth functions on closed domain, Whitney jet, polynomial cusps, Fréchet space, submersion, manifolds with corners, manifolds with rough boundary, manifold of mappings, exponential law}
  
\altkeywords{théorème de l’extension de Whitney, fonctions lisses sur des domaines fermés, jet de Whitney, cuspides polynomiales, espace de Fréchet, submersion, variétés à bord anguleux, variétés à bords non-lisses, variétés d'applications, loi de l’exponentielle}

\subjclass{58D15 (primary), 46T10, 58C07, 54C35, 46A04, 46A13, 53C21}

\begin{document}
\theoremstyle{remark}
\newtheorem{setup}[cdrthm]{}
\begin{abstract}
We consider a global, nonlinear version of the Whitney extension problem for manifold-valued smooth functions on closed domains $C$, with non-smooth boundary, in possibly non-compact manifolds.
Assuming $C$ is a submanifold with corners, or is compact and locally convex with rough boundary, we prove that the restriction map from everywhere-defined functions is a submersion of locally convex manifolds and so admits local linear splittings on charts.
This is achieved by considering the corresponding restriction map for locally convex spaces of compactly-supported sections of vector bundles, allowing the even more general case where $C$ only has mild restrictions on inward and outward cusps, and proving the existence of an extension operator.
\end{abstract}

\begin{altabstract}
Nous considérons une version du problème de l’extension de Whitney, globale et non linéaire, pour les fonctions lisses à valeurs dans des variétés et définies sur des domaines fermés $C$ à bords non-lisses dans des variétés possiblement non compactes.
Supposant que $C$ est une sous-variété à bord anguleux, ou qu'elle est compacte et localement convexe à bords non-lisses, nous montrons que l’opérateur de restriction, à partir des fonctions définies partout, est une submersion de variétés localement convexes, et donc permet des scindages linéaires locaux sur les cartes.
Nous considérons à cet effet l’opérateur de restriction correspondant pour les espaces localement convexes de sections de fibrés vectoriels à support compact, permettant aussi le cas plus général où $C$ n’a que des restrictions légères sur les cusps vers l’intérieur et l’extérieur, et montrons l’existence d’un opérateur de prolongement.
\end{altabstract}

\maketitle

\section{Introduction}

The extension of differentiable real-valued functions from closed subsets of Euclidean space, was decisively solved in the $1$-dimensional case by Whitney \cite{whitney} and for finite order differentiability on $\mathbb{R}^n$ by Fefferman \cite{MR2233850}. 
In the $1$-dimensional case, for any closed set $C\subset \mathbb{R}$ and the data of a \emph{Whitney jet} on $C$---a formal Taylor series defined everywhere on $C$---there is a real-valued differentiable function on $\mathbb{R}$ whose Taylor series on $C$ coincide with the original Whitney jet (the \emph{recognition problem}). 
Moreover, the restriction map from globally-defined functions to jets on $C$ is not just surjective but has a continuous (linear) section, also called an extension operator (the \emph{operator problem}), solved in \cite{MR1501749} and \cite{MR2373373}, the latter for $C^m$ functions on $\mathbb{R}^n$. 
We shall refer to these two problems jointly as the \emph{Whitney extension problem}.

For closed sets in $n$-dimensional space the differentiability class of the functions starts to impact the results and techniques (contrast \cite{Frerick_07b} with \cite{MR2373373}, for example), as well as regularity assumptions on the boundary of $C$ in the smooth case \cite[Theorem~2.1]{Frerick_07b} (obstructions to the operator problem).
A variation of these problems (\cite[Proposition~2.16]{MR671464}, \cite{Frerick_07b}) is to consider not the data of Whitney jets on $C$, but continuous extensions of differentiable functions from the interior $C^\circ$ of $C$, all of whose derivatives also extend continuously to $C$.
In this case the roughness of the boundary can prevent such smooth functions from defining Whitney jets (obstructions to the recogition problem) and from extending to a larger domain.
It is in this more delicate setting that we prove our theorems. 
Namely, we consider the following generalisation of the Whitney extension problem: Let $M$ and $N$ be smooth manifolds and $C\subset M$ closed. Recall that there is a smooth manifold structure on the space $C^\infty(M,N)$ of smooth maps from $M$ to $N$, modelled on spaces of sections in certain vector bundles (cf.\ Appendix \ref{app: calculus}).

\begin{problemone}\label{problem:1}
  To what extent and under what conditions can one define extension operators for the restriction map
  \[
    \res_C\colon C^\infty(M,N) \to \{\text{smooth functions }C\to N\}?
  \]
 Here by `smooth function' we mean smooth on $C^\circ$ such that all derivatives extend continuously to $C$. Part of this problem is to determine the appropriate definition of, and structure on, the latter function space.
\end{problemone}

Before we give an answer to this problem in Theorem~B stated below, let us illustrate an example toy application.  

\begin{example_nonumber}
  Let the manifold $M$ be a torus, an infinite cylinder, or more generally a quotient $\mathbb{R}^n/\Gamma$ by a proper action of some discrete group $\Gamma$, and let $C$ be the closure of an open set $C^\circ \subset M$ with non-smooth boundary.
  Take $N$ to be a Lie group $G$, and let $f\colon C \to G$ be a given function. 
  Assuming $f$ extends to a smooth function $M\to G$, when can we find an extension operator whose domain is all smooth functions $C\to G$ sufficiently close to $f$?
\end{example_nonumber}

If $G$ were a connected solvable Lie group, hence an aspherical manifold, one could reduce this problem to one of finding an extension operator for functions $C' \to \widetilde{G} \simeq \mathbb{R}^k$, where $C'\subset \mathbb{R}^n$ is the preimage of $C$ under the covering map $\mathbb{R}^n \to M$.
We might then apply existing extension theorems (e.g.\ \cite{MR2300454}), but then one needs to guarantee the resulting extended functions $\mathbb{R}^n \to \widetilde{G} \to G$ are $\Gamma$-equivariant so as to descend to $M$ (e.g.\ by averaging over $\Gamma$, if finite).
This approach fails even under mild generalisation, for instance to a non-aspherical homogeneous space on the source or non-solvable Lie group in the target.

One result of this article (Theorem~\ref{thm: mm:rb}) is that under suitable assumptions on $C$ (independent of being a subset of $M$) we can define a smooth, locally convex manifold structure on the codomain of $\res_C$. 
More precisely, if $C$ is compact and a manifold with rough boundary (Definition~\ref{defn: RBM}) then the space of smooth functions $C\to N$ is a Fr\'echet manifold with charts modelled on space of sections of vector bundles over $C$.
We recall also that if $C$ is in fact a manifold with corners (a special case of having a rough boundary), then we can drop the assumption of compactness and recover the construction of Michor \cite[Theorem~10.4]{michor} of a smooth manifold of smooth maps $C\to N$.
In this case, the charts are given by \emph{compactly supported} spaces of sections, so it is in this generality we will work. \\[2.3mm]
\textbf{Remark}
  The restriction \emph{in the general rough boundary case} to compact sets $C$ is only due to current manifold of mappings technology; a generalised $\Omega$-lemma in the forthcoming \cite{GloecknerNeebBuch} is one main missing ingredient. 
  We conjecture that the results of this paper relying on compactness of $C$ will be true for non-compact $C$.
\\[2.3mm]
Given Theorem~\ref{thm: mm:rb}, then, the nonlinear map $\res_C$ looks like, on charts, a \emph{linear} restriction map for spaces of (compactly supported) sections of vector bundles.
We thus attack Problem~1 by reducing it to a linear extension problem involving spaces of sections of vector bundles.
To set this problem up, let $M$ be a smooth manifold, $E\to M$ be a real vector bundle and $C\subset M$ some closed set.
Let $\Gamma_c(M,E)$ denote the space of compactly-supported smooth sections of $E\to M$.

\begin{problemtwo}\label{problem:2}
  Under what conditions can one define extension operators for the restriction map
  \[
    \res_C\colon \Gamma_c(M,E) \to \{\text{compactly-supported smooth sections } C\to E\big|_C\}?
  \]
 Again, part of this problem is to determine the appropriate definition of and structure on the latter function space.
\end{problemtwo}

One can make much weaker assumptions on the closed set in this case, leading to a much stronger theorem than we need for the application to Problem~1.
An answer to this problem is given in Theorem~A below, but let us first consider a special case.

\begin{example_nonumber}
  Consider again a quotient manifold $M = \mathbb{R}^n/\Gamma$ as in the previous Example, and a complex line bundle $E\to M$. 
  Sections of $E$ can be identified with functions $\mathbb{R}^n \to \mathbb{C}$ satisfying a twisted equivariance conditions.
  For a closed set $C\subset M$ with non-smooth boundary, when is there an extension operator from smooth sections over $C$ to global smooth sections?
\end{example_nonumber}

Again, under very special assumptions on the geometry, existing results (e.g.\ \cite[Theorem~2.1]{MR2300454}) might be adapted as in Example~1, since the space of sections of a rank-$k$ vector bundle $E \to \mathbb{R}^n/\Gamma$ is isomorphic to a space of suitably twisted-equivariant functions $\mathbb{R}^n \to \mathbb{C}^k$ (for instance, using a family of $GL(k,\mathbb{C})$-valued multipliers); this approach fails under mild generalisation.

\subsection*{Statement of results}

We describe our results now in more detail, starting from the linear case (Theorem~A) and working up to the main, nonlinear case (Theorem~B). 
Fix a pair of finite-dimensional manifolds $M,N$ with $M$ being a $\sigma$-compact and equipped with a Riemannian metric.
Let $C \subset M$ be a closed set satisfying a cusp condition, defined below in Definition~\ref{def:cusp_condition}.
This condition allows general Lipschitz domains, but also much rougher boundary conditions, for instance Koch snowflake-like sets.
Note that at this stage we do not assume that $C$ carries any submanifold structure of its own, whence smoothness is only a meaningful concept because we can test in charts of the manifolds $M$ and $N$ which do not have a boundary.  

\paragraph{Theorem A} \emph{
Let $E\to M$ be a (finite-rank) vector bundle and $C\subset M$ 
a closed set satisfying the cusp condition.
The restriction map on compactly-supported smooth sections 
\begin{align*}
\res_C\colon \Gamma_c(M,E) & \to \Gamma_c(C,E)\\ 
\sigma & \mapsto \sigma|_C
\end{align*}  has a continuous linear splitting.
}\smallskip

We will use this result to show that $\res^M_C \colon C^\infty (M,N) \rightarrow C^\infty (C,N)$ admits \emph{local} splittings.
Recall that for a smooth manifold $M$ (possibly with corners), the space $C^\infty_{\text{fS}} (M,N)$ of smooth mappings with the fine very strong topology (see \cite{HS17} and \cite{michor}, where the fS-topology is called $\mathcal{FD}$-topology) can be turned into an infinite dimensional manifold. 
If $M$ is compact the fine very strong topology coincides with the well known compact open $C^\infty$-topology.
We prove in Section \ref{sect: MFDMAP} that Theorem~A yields local sections of $\res_C^M$ if $C$ is a submanifold with corners of $M$.
If $C$ is compact, we can even relax the condition and allow \emph{submanifolds with rough boundary}, a definition introduced by Karl-Hermann Neeb \cite{GloecknerNeebBuch}. 
Thus our next main result can be formulated as follows:

\paragraph{Theorem B}\emph{
For $C\subset M$ a submanifold with corners, or compact and a submanifold with rough boundary, the restriction map $\res_C^M \colon C^\infty_{\text{fS}} (M,N) \rightarrow C^\infty_{\text{fS}} (C,N)$ is a submersion of locally convex manifolds.
}\smallskip

Recall that for infinite-dimensional manifolds whose model spaces are more general than Banach spaces, a submersion is a map that locally, in submersion charts, looks like a projection out of a product.
This is a stronger condition than the map on tangent spaces being a split surjection (cf.\ \cite{1502.05795v4} for a detailed study).

We remark here that Theorem~B does not imply that $\res_C^M$ is surjective as not necessarily all smooth functions on closed submanifolds with (rough) boundary will admit extensions to the ambient manifold (compare \cite[Corollary 6.27]{MR2954043}).
A simple example is the case where $M=S^2$, $C\subset S^2$ is a closed equatorial `belt' and $N=S^1$. 
A map $C \to S^1$ cannot extend to $S^2$ if has non-zero winding number.

Finally, we look at nested closed subsets which satisfy the assumptions of Theorem~B. 

\paragraph{Corollary C} \emph{
With the manifolds $M,N$ as above and closed sets $C\subset D\subset M$ which both satisfy the assumptions of Theorem B, the restriction map 
\begin{align*}
\res_C^{D}\colon C^\infty_{\text{fS}} (D,N) & \to C^\infty_{\text{fS}} (C,N)\\ 
f & \mapsto f|_{C}
\end{align*}
is a submersion of locally convex manifolds.
}\smallskip

A more specific corollary applies the above collection of results to closed sets that are geodesically strongly convex, for example closures $\overline{U_{i\ldots j}}$ of iterated finite intersections  $U_{i\ldots j} = U_i \cap \ldots \cap U_j$ of geodesically strongly convex charts. 
Such closed sets satisfy the required cusp condition and we prove in Lemma \ref{lem: sconv:rbsmfd} that they are submanifolds with rough boundary.

\paragraph{Corollary D}\emph{
  Let $M$ be a smooth Riemannian manifold with geodesically 
  strongly convex compact sets $C \subset D \subset M$ and $N$ another smooth manifold.
  Then the restriction map}
  \[
    \res_C^{D}\colon C^\infty (D,N) 
    \to 
    C^\infty(C,N) 
  \]
\emph{  is a submersion of \Frechet manifolds.}\smallskip

Corollary~D allows the construction of various spaces of tuples of maps satisfying equations on suitable closed subsets of their domain; one can use the submersions it gives to ensure certain limits of diagrams of \Frechet manifolds exist. 
To this end, a close analogue of this corollary was stated as \cite[Proposition 3]{1610.05904}, with only a rough sketch of a proof, ignoring the function space topologies, and also allowing $M$ to be a manifold with corners.
This was used to construct infinite-dimensional manifolds of certain functors from a \v Cech groupoid to an arbitrary Lie groupoid.
However, the correct hypothesis is rather `rough boundary', rather than corners, so Corollary~D should be taken to replace \cite[Proposition 3]{1610.05904}. 

One can ask the obvious questions as to how much further the results here can be pushed, especially in light of the results of Frerick on general sets satisfying the cusp condition \cite{MR2300454}.
The biggest obstacle in pursuing this, is to define the relevant locally convex topologies or manifold structures in the linear and non-linear cases respectively. In light of this, an extension of the results in the present paper might be possible but there seems to be no straightforward way to do this.

\smallskip

A brief outline of the paper is as follows. 
In Section~\ref{sect:prelim} we give basic notions that are needed for the paper, relegating most technical results for infinite-dimensional calculus and manifolds to Appendix~\ref{app: calculus}. 
Section~\ref{sect: WElin} gives the necessary ingredients to build towards Theorem~A, namely various bits of extension theory and patching results in the linear setting, and these are assembled in Section~\ref{sect: proof of thm A}.
We then recall (from the forthcoming \cite[Chapter 1.4]{GloecknerNeebBuch}) the fundamentals of the theory of manifolds with rough boundary in Section~\ref{sect: mfdmap:rb} and construct the smooth manifolds of maps in that case. 
In Section~\ref{sect: MFDMAP} we then finally prove Theorem~B.
Appendix~\ref{app: Whitney} is a summary of the theory of Whitney jets, for ease of reference.

\subsection*{Acknowledgements}

The first author thanks various commenters on MathOverflow, especially Uri Bader, as well as Beno\^it Kloeckner for helpful discussions.
The second author thanks Helge Gl\"ockner for useful comments on manifolds with rough boundary and their spaces of mappings (in particular that Theorem \ref{thm: mm:rb} follows from the exponential law). Both authors thank Seppo Hiltunen who made them aware of a critical error in an earlier version of this paper, and the referee for their helpful comments.

\section{Preliminaries and Notation}\label{sect:prelim}

We wish to study an extension operator between spaces of smooth functions on manifolds. In the end, we will see that, as for the vector space case, an extension operator for functions defined on a ``suitably nice'' subset $C$ of a manifold $M$ to smooth functions on the whole manifold exists. Further, we want to establish that the \emph{restriction of $N$-valued functions} is a submersion in the sense of \cite{1502.05795v4}.

\begin{setup}[Notation and conventions]
 We write $\N \coloneq \{1,2,\ldots\}$ and $\N_0 \coloneq \N \cup \{0\}$.
 Frequently we will use standard multiindex notation to denote (iterated) partial derivatives of a (smooth) function $f \colon \R^d \supseteq U \rightarrow \R^m$ as $\partial^\alpha f$ for $\alpha \in \N_0^d$ (see \ref{defn: multiindex}).
 For a subset $S$ of a topological space we denote by $S^\circ$ its interior. 
 We say that a subset $C$ of a topological space is \emph{regular}, if $C^\circ$ is dense in $C$. 
 We note that closed subsets satisfying the cusp condition to be defined below are always regular.
 
 Further, every finite-dimensional manifold considered in the following will always be assumed to be Hausdorff and $\sigma$-compact. 
 
 We say $M$ is a \emph{Banach} (or \emph{Fr\'echet}) manifold if all its modelling spaces are Banach (or Fr\'echet) spaces. In general, infinite-dimensional manifolds will not required to be $\sigma$-compact or paracompact. 
\end{setup}
 
We consider functions on non-open sets following \cite{Wockel06} (where these mappings are used to define manifolds with boundary). Further we frequently have need for smooth functions on possibly infinite-dimensional manifolds (think manifold of mappings). 
To this end we base our investigation on the so called Bastiani calculus \cite{bastiani} which readily generalises beyond the realm of Banach spaces (cf.\ Appendix \ref{app: calculus} for a short introduction).

\begin{definition}
 Let $E,F $ be locally convex spaces and $C \subseteq E$ be a set with dense interior. A continuous mapping $f \colon C \rightarrow F$ is called $C^1$-map if 
 $f\big|_{C^\circ}$ is $C^1$ in the sense of Bastiani calculus and the derivative $d(f\big|_{C^\circ})$ extends (necessarily uniquely) to a continuous mapping $df \colon C \times E \rightarrow F$.
 
 Similarly we say $f$ is $C^k$ for $k \in \N \cup \{ \infty \}$ if $f\big|_{C^\circ}$ is $C^k$ and the iterated differentials extend (uniquely) to all of $C$. 
 We say $f$ is \emph{smooth (or $C^\infty$)} if $f$ is $C^k$ for every $k \in \N$ and write $C^\infty (C,F)$ for the set of all smooth maps on $C$.
\end{definition}

We have the following version of the chain rule (cf \cite[Remark 5]{Wockel06}, and \cite[Lemma 3.17]{alas2012} for a more general statement; this is also treated in \cite[Proposition 1.4.10]{GloecknerNeebBuch}):

\begin{lemma}\label{lem: reschain}
 Let $C \subseteq E$ and $D \subseteq F$ be regular subsets of locally convex spaces $E,F$ and $H$ be another locally convex space.
 Consider $C^k$ mappings $f \colon C \rightarrow D \subseteq F$ and $g \colon D \rightarrow H$ then $g \circ f$ is a $C^k$ mapping if one of the following conditions is satisfied
 \begin{enumerate}
 \item $f(C^\circ) \subseteq D^\circ$ (no condition on $C$ and $D$), 
 \item $C,D$ are locally convex sets, i.e.\ every point has a neighborhood in the set which is convex (no condition on $f$ and $g$).
 \end{enumerate}
\end{lemma}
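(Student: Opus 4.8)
The plan is to reduce both cases to the classical chain rule for Bastiani $C^k$-maps between \emph{open} subsets of locally convex spaces, and then to promote the resulting higher-order chain rule formula (of Fa\`a di Bruno type) from the interiors $C^\circ$, $D^\circ$ to the closed domains by continuity and density.

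First I would record the ingredients: $f|_{C^\circ}$ is Bastiani $C^k$ on the open set $C^\circ$ and the iterated differentials $d^jf\colon C\times E^j\to F$, $0\le j\le k$, are continuous on all of $C\times E^j$; likewise $g|_{D^\circ}$ is $C^k$ and the $d^ig\colon D\times F^i\to H$ are continuous on $D\times F^i$. On $C^\circ$ the higher chain rule reads
\[
  d^j(g\circ f)(x;v_1,\ldots,v_j)=\sum_{P}d^{|P|}g\Bigl(f(x);\bigl(d^{|B|}f(x;(v_i)_{i\in B})\bigr)_{B\in P}\Bigr),
\]
the sum over partitions $P$ of $\{1,\ldots,j\}$. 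The right-hand side is a composite of the continuous maps $f$, the $d^{|B|}f$ and the $d^{|P|}g$, hence extends to a continuous map $C\times E^j\to H$; since $C^\circ\times E^j$ is dense in $C\times E^j$ and the target is Hausdorff this extension is unique, so it must be $d^j(g\circ f)$. Thus the lemma reduces to showing that $(g\circ f)|_{C^\circ}$ is Bastiani $C^k$ with differentials given by the displayed formula.

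Under hypothesis (1) this is immediate: $f|_{C^\circ}$ is $C^k$ with values in the open set $D^\circ$ and $g|_{D^\circ}$ is $C^k$, so the open-domain chain rule applies verbatim. Under hypothesis (2) the obstruction is that $f$ may send points of $C^\circ$ into $\partial D$, so that $g$ is being differentiated at boundary points of $D$ along the segments $t\mapsto f(x+tv)$; this is where local convexity enters. Being $C^k$ on a locally convex set is a local condition, so I would fix $x_0\in C$, pass to a convex neighbourhood $C_0$ of $x_0$ in $C$ with dense interior (using that the ambient space is locally convex) and a convex neighbourhood $D_0$ of $f(x_0)$ in $D$ with $f(C_0)\subseteq D_0$, and choose an interior point $y_1\in D_0^\circ$ (nonempty since $D$ is regular). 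For $\lambda\in(0,1]$ the affine contraction $h_\lambda(y)=(1-\lambda)y+\lambda y_1$ carries $D_0$ into $D_0^\circ\subseteq D^\circ$, and $h_\lambda\circ f\colon C_0\to D^\circ$ is $C^k$ with $d^j(h_\lambda\circ f)=(1-\lambda)\,d^jf$ for $j\ge 1$; so by hypothesis (1), already proved, $g\circ(h_\lambda\circ f)$ is $C^k$ on $C_0$ with differentials given by the displayed formula with $f$ replaced by $h_\lambda\circ f$. Letting $\lambda\to 0$, continuity of $g$ and of the maps $d^ig$ on $D\times F^i$ shows that $g\circ(h_\lambda\circ f)$ and all its differentials of order $\le k$ converge, uniformly on compact sets, to $g\circ f$ and to the Fa\`a di Bruno expressions built from the $d^jf$ and the $d^ig(f(\cdot);\cdot)$; the standard stability of $C^k$-maps under such (uniform-on-compacts) convergence then gives that $g\circ f$ is $C^k$ on $C_0$ with these differentials. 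Patching over $x_0$ completes case (2).

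The main obstacle is exactly this last step: differentiating $g$ along curves that reach $\partial D$ is not covered by the open-domain calculus, and handling it forces the detour into the interior by convex combinations together with a convergence-of-derivatives argument — equivalently, one could invoke a local $C^k$-extension lemma for convex sets with dense interior and compose local extensions $\tilde g\circ\tilde f$ on open sets. The convex structure of $C$ and $D$ is genuinely needed for this, and, as hypothesis (1) shows, cannot simply be omitted; everything else (the reduction to open domains, the combinatorics of the higher chain rule, and the density/Hausdorff extension) is routine bookkeeping.
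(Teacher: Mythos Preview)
The paper does not give its own proof of this lemma; it merely cites \cite[Remark 5]{Wockel06} and \cite[Lemma 3.17]{alas2012}. So there is nothing to compare against directly, and the question is simply whether your argument is sound.

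Case (1) is fine: the open-domain chain rule gives the Fa\`a di Bruno formula on $C^\circ$, and each ingredient on the right-hand side is a continuous function on the appropriate product with $C$, so the extension by density is automatic. Case (2) is also correct in outline, but the step you label ``standard stability of $C^k$-maps under uniform-on-compacts convergence'' is doing all the work and is not a black box in Bastiani calculus. Unpacking it, what you need is that for each $\lambda$ the fundamental theorem of calculus
\[
\phi_\lambda(x+tv)-\phi_\lambda(x)=t\int_0^1 d\phi_\lambda(x+stv;v)\,ds
\]
holds along segments in the convex set $C_0$ (true on $C_0^\circ$ and then by continuity on $C_0$), and that one may pass to the limit $\lambda\to 0$ under the integral using continuity of $dg$ on $D\times F$. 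That is exactly the mean-value argument that the cited references carry out directly, without the detour through the contractions $h_\lambda$: they apply the integral mean value theorem to $g$ along the segment from $f(x)$ to $f(x+tv)$, which lies in a convex neighbourhood $D_0\subseteq D$ for small $t$, and read off the derivative. Your contraction trick is a pleasant repackaging of the same idea---pushing into the interior by convex combination and invoking continuity of the extended differentials---but you should make the limiting step explicit rather than appeal to an unspecified stability principle; as written, a reader could reasonably object that this principle is precisely what is to be proved.
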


Note that for an open set $C \cap U$ is a regular set if $C$ is regular.  
Thus the chain rule allows us to make sense of  $C^k$-mappings on regular subsets of smooth manifolds \textbf{without boundary}. 

\begin{definition}
Let $C \subseteq M$ be a regular subset of a manifold without boundary. A continuous map $f \colon C \rightarrow N$ to a manifold $N$ without boundary is a $C^k$-mapping if for every $x\in C$ there is a pair of charts $(\varphi, U), (\psi,V)$ with $x\in U$, $f(x)\in V$ such that $\psi \circ f \circ \varphi^{-1}|_{\varphi (U \cap C)}$ makes sense and is a $C^k$-mapping.
\end{definition}

Clearly by Lemma \ref{lem: reschain} condition 1.\ this definition is independent of the choice of charts. 
However, we note that many of the familiar rules of calculus are no longer valid for $C^k$-mappings on sets with dense interior which are not locally convex. 
In any case, these results are not needed to treat spaces of sections as locally convex spaces in Section \ref{sect: WElin} below and to prove Theorem A.

To retain the ``usual behaviour'' of differentiable functions (most importantly, the Mean Value Theorem, and hence the chain rule) it is well known (e.g. \cite{Keller}) one needs to work with locally convex topological vector spaces. In the non-linear setting one needs to require in addition that the subset $C$ of the domain manifold is locally convex\footnote{This observation seems to be due to Karl-Hermann Neeb, and will be treated in the forthcoming book \cite{GloecknerNeebBuch}.}. 
This will be important to establish the global setting required in Theorem B. 
Namely, the usual rules of calculus enable the construction of manifolds of mappings as outlined in Section \ref{sect: mfdmap:rb}.

\section{Whitney's extension theorem for linear spaces of functions}\label{sect: WElin}

The aim of this section is to recall the Whitney extension theorem in the vector space case. 
Further, we discuss conditions under which the space of Whitney jets can be identified with spaces of smooth functions on a regular closed set. 
In this section we let $C \subseteq \R^d, d \in \N$ be a regular closed set.

\begin{setup}[Ideals of functions vanishing on closed sets]
Let $m \in \N$ and $W \subseteq \R^d$ be an open neighborhood of the regular closed set $C$. We consider 
\[
\mathcal{I}_C (W,\R^m) \coloneq \{g \in C^\infty (W,\R^m) \mid \partial^\alpha g|_C \equiv 0 \quad \forall \alpha \in \N_0^d \}.
\]
Since $\partial^\alpha \colon C^\infty_{\text{co}} (U,\R^m) \rightarrow C_{\text{co}} (U,\R^m), f \mapsto \partial^\alpha f$ and $\ev_x \colon C_{\text{co}} (W,\R^m) \rightarrow \R^m, f \mapsto f(x)$ are continuous linear (cf.\ \cite[Definition 2.5 and Proposition 3.20]{alas2012} with respect to the compact open $C^\infty$-topology (cf.\ Appendix \ref{app: calculus}), 
\[
\mathcal{I}_C (W,\R^m) = \bigcap_{\alpha \in \N_0^d} \bigcap_{x \in C} (\ev_x \circ \partial^\alpha)^{-1} (0)
\] 
is a closed vector subspace of the \Frechet space $C^\infty_{co} (U,\R^m)$.
Indeed, if we denote by $\mathcal{E} (C,\R^m)$ the $\R^m$-valued Whitney jets on $C$ (see Appendix \ref{app: Whitney}), we can view $\mathcal{I}_C (W,\R^m)$ as the kernel of the linear map $r_W \colon C^\infty (W,\R^m) \rightarrow \mathcal{E}(C,\R^m), g \mapsto (\partial^\alpha g\big|_C)_\alpha$. 
Recall from \cite[p.\ 126]{MR2300454} that $r_W$ is continuous if $m=1$.\footnote{Indeed the article claims this only for $W=\R^d$ but continuity follows directly from the remarks above Definition 2.1 in loc.~cit. as explained in Remark \ref{Whitney:fun}.} 
Identifying $C^\infty_{co} (W,\R^m) \cong C^\infty_{co} (W,\R)^m$ (cf.\ \cite[Lemma 3.4]{MR1934608}) we obtain continuity of $r_W$ for arbitrary neighborhoods $W$ and $m \in \N$. 
\end{setup}

\begin{thm}[{Whitney extension theorem \cite[Theorem 1]{whitney}, or \cite[Theorem 2.2]{MR2300454} for a modern introduction}]\label{thm: whitney:ext}
The following sequence of \Frechet spaces is exact:
\begin{equation}\label{eq: seq0}
\begin{xy}
\xymatrix{
    0 \ar[r] & \mathcal{I}_C (W, \R^m) \ar[r] & C^\infty_{\text{co}} (W, \R^m) \ar[r] & \mathcal{E} (C, \R^m)\ar[r] & 0 
    }
\end{xy}
\end{equation}
\end{thm}

\begin{remark}
Recall that in the category of locally convex spaces, a sequence 
\begin{displaymath}
\begin{xy}
\xymatrix{
    0 \ar[r] & A \ar[r]^i & B \ar[r]^q & C \ar[r] & 0 
    }
\end{xy}
\end{displaymath}
of continuous linear maps is \emph{exact} if it satisfies both of the following conditions
\begin{enumerate}
\item \emph{algebraically exact}, i.e.\ images of maps coincide with kernels of the next map,
\item \emph{topologically exact}, i.e.\ $i$ and $q$ are open mappings onto their images. 
\end{enumerate}
If $A$, $B$ and $C$ are \Frechet spaces topological exactness follows from algebraic exactness by virtue of the open mapping theorem; for general locally convex spaces this is not the case (cf.\ e.g.\ \cite{MR1977923}).
\end{remark}

Note that the Whitney extension theorem in general requires only a closed set $C$ and not (as we required) a closed and regular set. 
However, in our approach we will replace the space of Whitney jets by a space of smooth functions on a closed set. 
Here the regularity assumption comes into play (cf.\ Appendix \ref{app: calculus}) and we will now construct a mapping which deals with the identification:  

\begin{setup}\label{Dmap}
Consider the mapping 
\begin{align*}
D \colon C^\infty (C,\R^m) &\rightarrow \prod_{\alpha \in \N_0^d} C_{\text{co}}(C,\R^m) \\ 
f & \mapsto (\partial^\alpha f)_{\alpha}.
\end{align*}
Then $D$ makes sense by our definition of $C^\infty (C,\R^m)$ and is injective and linear. 
Arguing as in \cite[Section 2]{MR2300454} the image of $D$ is a closed subspace of the \Frechet space $\prod_{\alpha \in \N_0^d} C_{\text{co}}(C,\R^m)$ (note that we have compact convergence of functions and all derivatives on the dense interior of $C$!). 
\end{setup}

As the mapping $D$ takes a smooth function on $C$ to a jet expansion (i.e.\ its family of derivatives), one is tempted to think that $D$ takes its image in the space $\mathcal{E}(C,\R^m)$ of Whitney jets. 
However, this is wrong in general as the following example from \cite[Example 2.18]{MR671464} shows:

\begin{example}
Let $C$ be the complement of the open subset $\{(x,y) \in \R^2 \mid 0 < y < \exp (-1/x^2), x > 0\}$. 
Then $C$ is a regular closed set and we define a function $f \in C^\infty (C,\R^2)$ as follows 
\[
f(x,y) = \begin{cases} \exp (-1/x^2) & \text{ if } x>0 ,\ y \geq \exp (-1/x^2) \\
0 & \text{ otherwise }
\end{cases}
\]
A computation of $\left(f\!\left(x,\exp(-1/x^2)\right)-f(x,0)\right)/(\exp(-1/x^2)-0) =1$ shows that $f$ cannot be extended to a smooth function on $\R^2$. 
Thus in particular, the image of $f$ under the mapping $D$ from \ref{Dmap} is not a Whitney jet. 
\end{example}

As a consequence $D$ can take its image in the space of Whitney jets only if every smooth function on $C$ extends to a smooth function on an open neighborhood of $C$. 
It turns out that the non existence of extensions is tied to the exponential type cusps of the set $X$ in the example. 
Prohibiting such inward cusps, which we shall call \emph{narrow fjords}, ensures that every smooth function can indeed be extended.

\begin{definition}[{\cite[2.16.1]{MR671464}}]\label{defn: incusp}
Let $A$ be a regular closed subset of $\R^d$. We say $A$ has \emph{no narrow fjords} if for all $a\in A$ exists an integer $p$, a compact neighborhood $K$ of $a$ in $A$ and a constant $C>0$ such that any $x,y\in K$ can be joined by a rectifiable path $\gamma$ lying inside $A^\circ$, except perhaps for finitely many points, and the length $\ell (\gamma)$ of $\gamma$ satisfies
\[
  \lVert x-y\rVert \geq C\,\ell (\gamma)^p.
\]
\end{definition}

Note that this definition gives control over how fast the width of fjords can shrink as one moves inwards along them, see Figure~\ref{fig:fjords_condition}. Further, the no narrow fjords condition is closely related to the conditions called $C$-quasiconvexity and the $(C,\omega)$-convexity from \cite[Definition 2.63]{MR2882877}.

\begin{figure}
\begin{center}
\usetikzlibrary{decorations.markings}
\begin{tikzpicture}[decoration={ 
markings,
mark=at position 0.08 with {\arrow{stealth}};,
mark=at position 0.3 with {\arrow{stealth}};,
mark=at position 0.8 with {\arrow{stealth}}
}]

  \path [fill=lightgray] 
        (-1,0)
        to (-1,5.5)
        to (6,5.5) 
        to (5.5,4.5)
        to (4,4.5)
        to [out=-50] (4.5,3.5)
        to [out=-165,in=0] (0.5,3)
        to [out=0,in=170] (3.5,2.5)
        to (4,2.75) 
        to [out=-80,in=135] (5,1.5)
        to [out=-75] (6,0)
        to (-1,0);
  \draw (6,5.5) to (5.5,4.5)
        to (4,4.5)
        to [out=-50] (4.5,3.5)
        to [out=-165,in=0] (0.5,3)
        to [out=0,in=170] (3.5,2.5)
        to (4,2.75) 
        to [out=-80,in=135] (5,1.5)
        to [out=-75] (6,0);
  \node [below] at (0,0.5) {$A$};

  \draw [fill] (0.5,3) circle [radius=0.03];
  \node [left] at (0.5,3) {$a$};

  \draw (5.19,1) -- (-0.5,1) -- (-0.5,4.63)  -- (5.56,4.63);
  \node [above right] at (-0.5,4.15) {$K$};

  \draw [fill] (5.5,4.5) circle [radius=0.03];
  \node [below left] at (5.5,4.5) {$x$};

  \draw [fill] (5,1.5) circle [radius=0.03];
  \node [right] at (5,1.5) {$y$};

  \draw [dotted] (5.5,4.5) to (5,1.5);

  \draw[postaction={decorate}, red] (5.5,4.5) to (5,4.75)
              to (4,4.5)
              to (0.5,3)
              to (5,1.5);
  \node [left] at (3,4.2) {$\gamma$};

  \draw [fill] (3.5,3.25) circle [radius=0.03];

  \draw [fill] (3.5,2.5) circle [radius=0.03];

  \draw [dotted] (3.5,3.25) to (3.5,2.5);


  \draw [fill] (2.3,3.07) circle [radius=0.03];

  \draw [fill] (2.3,2.76) circle [radius=0.03];

  \draw [dotted] (2.3,3.07) to (2.3,2.76);

\end{tikzpicture}

\caption{No narrow fjords condition}\label{fig:fjords_condition}
\end{center}
\end{figure}
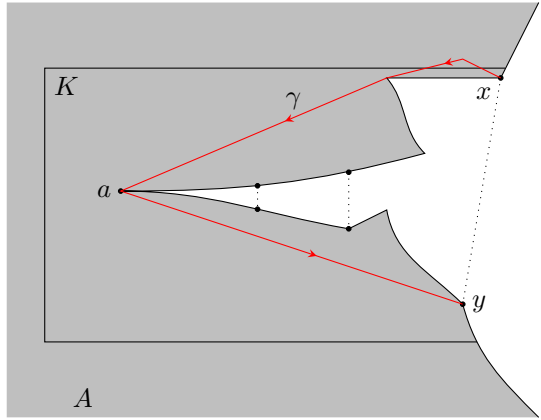

\begin{example}
  Let $A$ be a regular closed set.
  Recall that the open set $A^\circ$ satisfes the \emph{bounded turning condition} if there is a constant $C>0$ such that for all $x,y\in A$, there is a rectifiable path $\gamma$ from $x$ to $y$ such that $\lVert x - y\rVert \geq C\, \ell(\gamma)$. 
  If $A^\circ$ satisfies the bounded turning condition then $A$ has no narrow fjords.
  Any uniform domain \cite{Martio-Sarvas} (see, for example, \cite[Definition 2.2]{Mourgoglou} for an updated formulation) satisfies the bounded turning condition, which includes all H\"older domains and NTA domains ("non-tangentially accessible domains" as introduced by \cite{JK82}), and so the closures of all these sets all have no narrow fjords.
\end{example}

\begin{setup}\label{setup: WJ}
Let now $C$ be a regular closed set with no narrow fjords.
Then $D \colon C^\infty (C,\R^m) \rightarrow \prod_{\alpha \in \N_0^d} C_{\text{co}} (C,\R^m)$
takes its image in $\mathcal{E}(C,\R^m)$ by  \cite[Proposition 2.16]{MR671464}.
As a consequence of the Whitney extension theorem \ref{thm: whitney:ext}, every element in $C^{\infty} (C,\R^m)$ extends to a smooth map on $\R^d$, whence the image of $D$ coincides with the space of Whitney jets $\mathcal{E} (C, \R^m)$ (see Definition \ref{defn: WJm}).
Thus we topologize $C^\infty (C,\R^m)$ with the identification topology induced by $D$, turning it into a \Frechet space isomorphic to the space of Whitney jets on $C$.
In particular, the exact sequence \eqref{eq: seq0} yields an exact sequence of \Frechet spaces
\begin{equation}\label{eq: WSQ1}
  \begin{xy}
    \xymatrix{
        0 \ar[r] & \mathcal{I}_C (W, \R^m) \ar[r] & C^\infty_{\text{co}} (W, \R^m) \ar[r] & C^\infty  (C, \R^m)\ar[r] & 0 .
        }
  \end{xy}
\end{equation}
\end{setup}

In the next section we are going to investigate \emph{outward} cusp conditions on the boundary of closed subsets and show how they can be transferred to Riemannian manifolds.

\section{The cusp condition}

In the last section we have already encountered a cusp condition preventing the occurrence of certain (inward) cusps on the boundary of the closed set on which we are working.
The key functional-analytic result we use to extend sections is due to Frerick in \cite{MR2300454}.
It uses a metric condition on a closed domain $F$ in $\R^n$ to ensure there is a continuous extension operator for Whitney jets on $F$ to smooth functions on $\R^n$.
The following definition abstracts the hypothesis from \cite[Theorem 3.16]{MR2300454} and from Definition \ref{defn: incusp} so as to apply to closed sets in a metric space more general than $\R^n$. See Figure~\ref{fig:cusp_condition} for an illustration of the various quantities in the following definition.

\begin{definition}\label{def:cusp_condition}
  Let $(M,d)$ be a locally compact metric space. A closed set $F\subset M$ has 
  \begin{enumerate}
  \item \emph{no narrow fjords} if for each $x \in F$ there exists $p\in \N$, $K \subseteq F$ a compact neighborhood of $x$ and $D>0$ such that all $y,z \in K$ can be joined by a rectifiable curve $\gamma$ lying inside $F^\circ$, except perhaps for finitely many points, such that its length $\ell_d(\gamma)$ satisfies $d(y,z) \geq D\,\ell_d (\gamma)^p$.
  \item \emph{at worst polynomial outward cusps} if for all compact $K\subset M$ there exist $\varep_0,\rho >0$ and $r\geq 1$ such that for all $z\in K\cap \partial F$ and $0<\varep<\varep_0$ there is an $x\in F$ with $d(x,z)<\varep$ such that if $d(x,y)<\rho\varep^r$ then $y\in F$ and $d(z,y)<\varep$.
  \end{enumerate}
  If $F$ has at worst polynomial outward cusps and no narrow fjords we simply say that $F$ \emph{satisfies the cusp condition}. 
\end{definition}

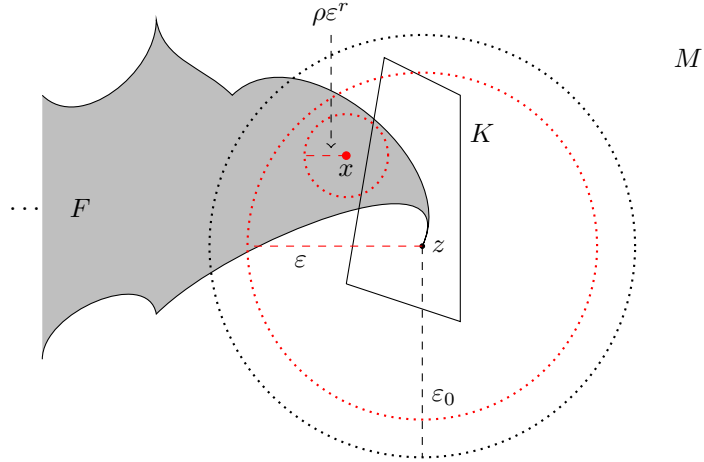
\begin{figure} 
\begin{center}

\begin{tikzpicture}

  \node at (10,6) {$M$};

  \draw [fill=lightgray] 
          (1.5,5.5) to [out=-45,in=-90] (3,6.5) 
          to [out=-80,in=135] (4,5.5) 
          to [out=45,in=64]  (6.5,3.5) 
          to [out=66,in=45] (3,2.6) 
          to [out=100,in=90] (1.5,2);
  \node at (2,4) {$F$};
  \node [left] at (1.6,4) {$\ldots$};

  \draw (5.5,3) -- (6,6) -- (7,5.5) -- (7,2.5) -- (5.5,3);
  \node [right] at (7,5) {$K$};

  \draw [fill] (6.5,3.5) circle [radius=0.03];
  \node [right] at (6.5,3.5) {$z$};

  \draw [thick,dotted] (6.5,3.5) circle [radius=2.8];
  \draw [dashed] (6.5,3.5) -- (6.5,0.7);
  \node [right] at (6.5,1.5) {$\varepsilon_0$};

  \draw [red, dotted, thick] (6.5,3.5) circle [radius=2.3];
  \draw [red,dashed] (6.5,3.5) -- (4.2,3.5);
  \node [below] at (4.9,3.5) {$\varepsilon$};
  
  \draw [fill=red,red] (5.5,4.7) circle [radius=0.05];
  \node [below] at (5.5,4.7) {$x$};

  \draw [red, dotted,thick] (5.5,4.7) circle [radius=0.55];
  \draw [red,dashed] (5.5,4.7) -- (4.95,4.7);
  \draw [->,dashed] (5.3,6.3) -- (5.3,4.8);
  \node [above] at (5.3,6.3) {$\rho\varepsilon^r$};

\end{tikzpicture}
\caption{Polynomial outward cusps}\label{fig:cusp_condition}
\end{center}
\end{figure}

In the case that $r=1$, the condition on outward polynomial cusps is sometimes called the (interior) corkscrew condition \cite[p.~123]{MR2305115}, and so our polynomial cusps can be seen as corkscrews with nonlinear growth. 

\begin{remark}\label{wlog_constants_nice}
  In Definition~\ref{def:cusp_condition}.2, if the constants $\varep_0,\rho,r$ work for the compact set $K$, then so do smaller such constants, and if $\varep_0 \leq 1$ then we can also increase $r$.
  Putting this together, we can assume that $\varep_0 = \rho < 1$ and increase $r$ as needed, and as a result can replace $\rho \varep^r$ by $\varep^{r+1}$.
  Hence we can, without loss of generality, assume that $\rho=1$ and $r \geq 2$.
\end{remark}

\begin{example}\label{ex: cusp}
  Every Lipschitz domain satisfies the cusp condition, as do H\"older domains and NTA domains. 
  The compact subset of $\R^2$ whose boundary is the Koch curve satisfies the cusp condition.
\end{example}

Recall that $\mathcal{E}(F)$ denotes the space of Whitney jets on the closed set $F$.
In the following Theorem, $\R^n$ is taken with the Euclidean metric.

\begin{theorem}[{\cite[Theorem 3.16]{MR2300454}}]\label{thm:Frerick}
  Let $F\subset \R^n$ be closed and have at worst polynomial outward cusps. 
  Then the surjective map $C^\infty(\R^n) \to \mathcal{E}(F)$ of Fr\'echet spaces has a continuous linear splitting. 
\end{theorem}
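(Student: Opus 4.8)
The plan is to reduce the statement to the known result of Frerick on the existence of a continuous \emph{extension operator} for Whitney jets, and then to recast the content of that operator as a continuous linear splitting of the surjection in \eqref{eq: seq0}. Concretely, \cite[Theorem 3.16]{MR2300454} (under the at-worst-polynomial-outward-cusps hypothesis) produces a continuous linear map $\mathcal{E}(F) \to C^\infty(\R^n)$, call it $T$, which is right inverse to the restriction-to-jet map $r \colon C^\infty(\R^n) \to \mathcal{E}(F)$, i.e.\ $r \circ T = \id_{\mathcal{E}(F)}$. So the first thing I would do is state precisely which form of Frerick's theorem is being invoked: not merely that the sequence \eqref{eq: seq0} (with $W = \R^n$, $m=1$) is exact — that is the classical Whitney theorem, recalled in \ref{setup: whitney:ext} — but that the quotient map admits a continuous linear section. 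The point of the cusp condition is exactly to upgrade the bare surjection to a \emph{split} surjection.

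The second step is bookkeeping about the map $C^\infty(\R^n) \to \mathcal{E}(F)$ itself: one must check that the map whose splitting is claimed is genuinely the continuous linear restriction map $r_W$ discussed in the ``Ideals of functions vanishing on closed sets'' setup, with $W = \R^n$ and $m = 1$. Continuity of $r_W$ was recorded there (citing \cite[p.\ 126]{MR2300454} and the footnote), and surjectivity is the Whitney extension theorem. So ``the surjective map $C^\infty(\R^n) \to \mathcal{E}(F)$ of Fr\'echet spaces'' in the statement is unambiguous, and a continuous splitting of it is precisely a continuous linear $s \colon \mathcal{E}(F) \to C^\infty(\R^n)$ with $r_W \circ s = \id$. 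One then simply takes $s = T$ from Frerick's extension operator. If one wanted to be self-contained about the equivalence ``continuous linear splitting of the quotient map'' $\iff$ ``continuous linear extension operator for jets,'' that equivalence is a tautology once one identifies $\mathcal{E}(F)$ with the quotient $C^\infty(\R^n) / \mathcal{I}_F(\R^n,\R)$ via the Whitney theorem, so there is nothing further to prove there.

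The only genuine mathematical content therefore sits inside \cite[Theorem 3.16]{MR2300454}, which I am permitted to assume; the work in the present paper is organizational, namely verifying that Definition~\ref{def:cusp_condition}.2 (at worst polynomial outward cusps), possibly after the harmless normalization in Remark~\ref{wlog_constants_nice}, is literally the hypothesis of that theorem for closed sets in $\R^n$ with the Euclidean metric. So I would write the proof as: (i) note $F \subset \R^n$ closed with at worst polynomial outward cusps satisfies the metric hypothesis of \cite[Theorem 3.16]{MR2300454} (this is how Definition~\ref{def:cusp_condition}.2 was abstracted); (ii) invoke that theorem to obtain a continuous linear extension operator $T \colon \mathcal{E}(F) \to C^\infty(\R^n)$ with $r_W \circ T = \id_{\mathcal{E}(F)}$; (iii) conclude that $T$ is the desired continuous splitting of $C^\infty(\R^n) \to \mathcal{E}(F)$.

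I expect the main (and essentially only) obstacle to be purely expository rather than mathematical: one must be careful that the statement in \cite{MR2300454} is phrased as existence of an extension operator for Whitney jets and possibly with slightly different constants or formulation of the cusp condition, so the substance of the proof is checking that the hypotheses match up — in particular that the ``for all compact $K$'' quantification in Definition~\ref{def:cusp_condition}.2, which is tailored so that the result survives transfer to Riemannian manifolds later, specializes correctly to Frerick's (local, or globally-uniform-on-compacts) condition on $\R^n$. There is no hard analysis to redo; the only risk is a mismatch in conventions, which the citation and the deliberate abstraction in Definition~\ref{def:cusp_condition} are designed to avoid.
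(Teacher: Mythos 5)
Your proposal is correct and matches the paper exactly: Theorem~\ref{thm:Frerick} is stated as a direct citation of \cite[Theorem 3.16]{MR2300454} and the paper supplies no proof of its own, since Definition~\ref{def:cusp_condition}.2 was deliberately abstracted from Frerick's hypothesis so that for closed sets in $\R^n$ with the Euclidean metric the two conditions coincide. Your observation that the continuous linear extension operator for Whitney jets is tautologically the same thing as a continuous splitting of the restriction map $C^\infty(\R^n)\to\mathcal{E}(F)$ is the only point that needs saying, and you say it.
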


Moreover, Theorem \ref{thm:Frerick} combined with \ref{setup: WJ} yields the following Corollary which generalises \cite[Theorem~2.1]{Frerick_07b}.

\begin{corollary}\label{cor:cusps!}
Let $F\subset \R^n$ be closed and satisfy the cusp condition, then the surjective map $C^\infty(\R^n) \to C^\infty (F,\R)$ of Fr\'echet spaces has a continuous linear splitting.
\end{corollary}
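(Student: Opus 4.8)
The plan is to deduce the statement from the two black boxes already in place, namely Frerick's splitting theorem for Whitney jets (Theorem~\ref{thm:Frerick}) and the identification of $C^\infty(F,\R)$ with the \Frechet space of Whitney jets carried out in \ref{setup: WJ}; the only real work is to check that these two identifications are compatible with the relevant restriction maps.

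First I would record that a closed set satisfying the cusp condition is regular. This is asserted in the Conventions, and in any case follows from the no-narrow-fjords clause: given $x\in F$, any point $y$ of a compact neighbourhood $K$ of $x$ can be joined to $x$ by a rectifiable curve lying in $F^\circ$ apart from finitely many points, so $x\in\overline{F^\circ}$. Hence $F$ is a regular closed set with no narrow fjords, and \ref{setup: WJ} applies: the map
$$D\colon C^\infty(F,\R)\to \mathcal{E}(F),\qquad f\mapsto(\partial^\alpha f)_\alpha,$$
is an isomorphism of \Frechet spaces, since it is injective and linear by construction, its image is the closed subspace $\mathcal{E}(F)$ by \cite[Proposition~2.16]{MR671464} together with the Whitney extension theorem, and $C^\infty(F,\R)$ was by definition given the topology that makes $D$ a topological embedding.

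Next I would identify the two restriction maps involved. Write $\res_F\colon C^\infty(\R^n)\to C^\infty(F,\R)$, $g\mapsto g|_F$, and recall the jet restriction $r_{\R^n}\colon C^\infty(\R^n)\to\mathcal{E}(F)$, $g\mapsto(\partial^\alpha g|_F)_\alpha$. For $g\in C^\infty(\R^n)$ one has $\partial^\alpha(g|_F)=(\partial^\alpha g)|_F$ for every $\alpha$: the derivatives of $g|_F$ are computed on the dense interior $F^\circ$, where they agree with those of $g$, and both sides extend continuously to $F$. Therefore $D\circ\res_F=r_{\R^n}$. In particular surjectivity of $\res_F$ is immediate, since $r_{\R^n}$ is onto $\mathcal{E}(F)$ by the Whitney extension theorem and $D$ is a bijection onto $\mathcal{E}(F)$; this is the surjectivity referred to in the statement, and it is compatible with the exact sequence \eqref{eq: WSQ1}.

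Finally, since $F$ has at worst polynomial outward cusps, Theorem~\ref{thm:Frerick} provides a continuous linear map $S\colon\mathcal{E}(F)\to C^\infty(\R^n)$ with $r_{\R^n}\circ S=\id_{\mathcal{E}(F)}$. I claim $S\circ D\colon C^\infty(F,\R)\to C^\infty(\R^n)$ is the desired continuous linear splitting of $\res_F$. Indeed it is continuous and linear, and
$$D\circ(\res_F\circ S\circ D)=(D\circ\res_F)\circ S\circ D=r_{\R^n}\circ S\circ D=D,$$
so injectivity of $D$ forces $\res_F\circ S\circ D=\id_{C^\infty(F,\R)}$. I do not expect a genuine obstacle here: the argument is a diagram chase once Theorem~\ref{thm:Frerick} and \ref{setup: WJ} are taken for granted. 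The only point that needs a moment's care is the identity $D\circ\res_F=r_{\R^n}$, i.e.\ that taking the jet of a $C^k$-map on a regular set (derivatives on the dense interior, extended continuously) commutes with restriction of an honestly smooth function on the ambient $\R^n$.
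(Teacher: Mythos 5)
Your proof is correct and is precisely the argument the paper intends: the corollary is stated as an immediate consequence of Theorem~\ref{thm:Frerick} and the identification in \ref{setup: WJ}, and your diagram chase ($D\circ\res_F=r_{\R^n}$, hence $S\circ D$ splits $\res_F$) is exactly the implicit content of that combination. The only cosmetic remark is that regularity of $F$ is most directly forced by the outward-cusp half of the condition (each boundary point is approximated by centres of small balls contained in $F$), rather than by the no-narrow-fjords clause alone, but the paper itself simply asserts regularity in its Conventions, so this changes nothing.
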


We want to be able to sensibly transfer both Frerick's Theorem and Corollary \ref{cor:cusps!} in Euclidean space to a Riemannian manifold, so we will need a result that allows change of metric.
The following result is stated in more generality than we need, since it should be of independent interest.

\begin{lemma}\label{lem:transfer_cusp_condition}
  Let $(M,d_1)$ be a locally compact, complete metric space, $F\subset M$ be closed and let $F$ have at worst polynomial outward cusps using the metric $d_1$.
  If $d_2$ is another metric on $M$ that is locally bi-H\"older equivalent to $d_1$, then $F$ has at worst polynomial outward cusps using the metric $d_2$.
\end{lemma}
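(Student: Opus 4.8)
The plan is to derive the $d_2$-version of the polynomial outward cusp condition from the $d_1$-version applied to a slightly enlarged compact set, using a finite subcover together with Lebesgue numbers for \emph{both} metrics to make the local bi-Hölder comparison uniform. First I would record that Hölder maps are continuous, so $d_1$ and $d_2$ define the same topology on $M$; consequently $F$ being closed, its interior $F^\circ$, and its boundary $\partial F$ are all unchanged, and only the metric estimates in Definition~\ref{def:cusp_condition}.2 are in question. So fix an arbitrary compact $K\subseteq M$; the task is to produce $\varep_0',\rho',r'$ witnessing the condition for $d_2$ on $K$.

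The next step is to enlarge $K$. Using local compactness of $(M,d_1)$ and compactness of $K$, cover $K$ by finitely many balls $B_{d_1}(p_i,r_i/2)$ whose doubles $\overline{B}_{d_1}(p_i,r_i)$ are compact, and set $\delta:=\frac{1}{2}\min_i r_i$; then $K'':=\{q\in M:d_1(q,K)\le\delta\}$ is closed and contained in $\bigcup_i \overline{B}_{d_1}(p_i,r_i)$, hence compact. Apply the hypothesis (at worst polynomial outward cusps for $d_1$) to $K''$ and invoke Remark~\ref{wlog_constants_nice}: this gives $\varep_0>0$ and an exponent $r\ge 2$, with $\rho$ taken to be $1$. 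Then I would extract uniform Hölder data on $K''$: each of its points has a relatively compact neighbourhood on which $d_1$ and $d_2$ compare in both directions, and since such neighbourhoods have bounded diameter one may normalise the exponents (absorbing any power $>1$ into the constant) so that the comparison reads $\alpha_1 d_1(x,y)^{\beta_1}\le d_2(x,y)\le \alpha_2 d_1(x,y)^{\beta_2}$ with $\beta_1\ge 1\ge\beta_2>0$. Passing to a finite subcover $\{V_i\}$ of $K''$ and enlarging the constants to reconcile the finitely many exponents, one obtains a single quadruple $(\alpha_1,\beta_1,\alpha_2,\beta_2)$ for which these inequalities hold whenever $x,y$ lie in a common $V_i$. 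Let $\lambda>0$ be such that every $d_1$-ball of radius $\lambda$ centred in $K''$ lies in some $V_i$, and $\mu>0$ such that the same holds for $d_2$-balls (both exist since $K''$ is compact and $\{V_i\}$ is open in either metric); then shrink $\varep_0$ below $\min\{\delta,\lambda\}$.

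For the core estimate I would fix $z\in K\cap\partial F$ and a small $\varep'>0$, put $\varep:=(\varep'/\alpha_2)^{1/\beta_2}$, and note $\varep\in(0,\varep_0)$ once $\varep'<\varep_0':=\alpha_2\varep_0^{\beta_2}$. The $d_1$-condition for $K''$ supplies $x\in F$ with $d_1(x,z)<\varep$; since $x\in K''$ and $d_1(x,z)<\lambda$, the pair $x,z$ lies in a common $V_i$, so $d_2(x,z)\le\alpha_2 d_1(x,z)^{\beta_2}<\alpha_2\varep^{\beta_2}=\varep'$. Now set $r':=r\beta_1/\beta_2\geq r\geq 1$ and $\rho':=\alpha_1\alpha_2^{-r'}$, and shrink $\varep_0'$ once more so that $\rho'(\varep_0')^{r'}<\mu$. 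Given $y$ with $d_2(x,y)<\rho'(\varep')^{r'}$, we then have $d_2(x,y)<\mu$, so $x,y$ lie in a common $V_j$, and the lower Hölder inequality gives $d_1(x,y)\le(d_2(x,y)/\alpha_1)^{1/\beta_1}<(\alpha_2^{-r'}(\varep')^{r'})^{1/\beta_1}=\varep^{r}$, the last equality being a one-line exponent check using $r'/\beta_1=r/\beta_2$. By the $d_1$-condition, $y\in F$ and $d_1(z,y)<\varep$; hence $y\in K''$, $d_1(z,y)<\lambda$, so $z,y$ lie in a common $V_i$ and $d_2(z,y)\le\alpha_2 d_1(z,y)^{\beta_2}<\varep'$. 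Thus $\varep_0',\rho',r'$ witness the condition for $d_2$ on $K$; since $K$ was arbitrary, $F$ has at worst polynomial outward cusps for $d_2$.

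The step I expect to be the main obstacle is the one handled in building the cover: the bi-Hölder comparison is only available between points that share a small neighbourhood, whereas the conclusion of the $d_1$-cusp condition has to be applied to a point $y$ about which we initially know only that $d_2(x,y)$ is small — not that $y$ is $d_1$-close to $x$, nor even that it lies in $K''$. Choosing a Lebesgue number of the cover with respect to $d_2$ (and not only $d_1$) is exactly what certifies that such a $y$ still shares a $V_j$ with $x$, so that the comparison applies and the conclusion can be pulled back to the $d_1$ side; the remainder is bookkeeping with exponents and finitely many shrinkings of $\varep_0'$. I note that completeness of $(M,d_1)$ is not actually used in this argument: local compactness together with compactness of $K$ suffices to construct $K''$.
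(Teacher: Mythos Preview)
Your proof is correct and follows the same overall strategy as the paper's: pass to an enlarged compact set on which the local bi-H\"older comparison becomes uniform, then convert the $d_1$-constants into $d_2$-constants by exponent bookkeeping. The execution differs in two respects worth noting. First, you build the enlarged compact $K''$ directly from local compactness via a finite cover by relatively compact balls and then apply the $d_1$-cusp hypothesis to $K''$; the paper instead applies the hypothesis to the original $K$ and only afterwards introduces an auxiliary compact neighbourhood $N$ of $F\cap K$ (defined using both metrics), whose compactness is precisely where the paper invokes completeness. Second, you use Lebesgue numbers of the finite cover in \emph{both} metrics to guarantee that each of the pairs $(x,z)$, $(x,y)$, $(z,y)$ shares a common $V_i$ before applying the H\"older inequalities, whereas the paper argues by a direct triangle-inequality estimate that the auxiliary point $y$ lands in $N$. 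Your remark that completeness is not actually used is correct for your argument; the paper's route needs it only to secure compactness of $N$, and your explicit construction of $K''$ sidesteps that step entirely.
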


\begin{proof}
  Let $K\subset M$ be any compact set and $\varep_{0,1}$, $\rho_1$ and $r_1$ be the constants guaranteed to exist for $K$ by virtue of $F$ satisfying Definition~\ref{def:cusp_condition}.2 for $d_1$. 
  By Remark~\ref{wlog_constants_nice} we will assume $\rho_1 = 1$, $r_1\geq 2$ and $\varep_{0,1} < 1$.

  Define the compact set
  \[
    N \coloneq \overline{\{ x\in M \mid 
          d_1(F\cap K,x) < 2
          \text{ and } d_2(F\cap K,x) < 2\}}.
  \]
  Now as $d_1$ and $d_2$ are locally bi-H\"older equivalent there are constants $C \geq 1$ and $0<\alpha\leq 1$ such that 
  \[
    \frac{1}{C}\, d_1(a,b)^{\frac1\alpha} \leq d_2(a,b) \leq C\, d_1(a,b)^\alpha
  \]
  for all $a,b\in N$.
  
  Take 
  \begin{align*}
    \varep_{0,2} &\coloneq \min\{C\,\varep_{0,1}^\alpha,{\textstyle \frac12} \},\\
     \rho    &\coloneq \frac{1}{C^{1+r_1/\alpha^2}},\text{ and}\\
     r_2       &\geq r_1/\alpha^2\quad \text{such that } \rho\,\varep_{0,2}^{r_2} \leq \varep_{0,1}
  \end{align*}
  to be the putative uniform constants required so that $F$ satisfies Definition~\ref{def:cusp_condition}.2 for the metric $d_2$.
  Note that since $\varep_{0,2} < 1$ it does makes sense to enlarge $r_2$ until the upper bound on $\rho\,\varep_{0,2}^{r_2}$ is satisfied.

  Let $z\in \partial F\cap K$ be arbitrary, and take any $\varep_2$ such that $0<\varep_2 < \varep_{0,2}$.
  Define $\varep_1 = (\varep_2/C)^{\frac1\alpha}$. 
  Since $\varep_1  = (\varep_2/C)^{\frac1\alpha} < (\varep_{0,2}/C)^{\frac1\alpha} = \varep_{0,1}$ then there is an $x\in F$ satisfying $d_1(x,z) < \varep_1$ such that
  \[
    d_1(x,y) < \varep_1^r \quad \Rightarrow \quad d_1(z,y) < \varep_1\,\text{ and }\, y\in F.
  \]
  Note that as $z\in K$ and $d_1(x,z) < \varep_1 < \varep_{0,1} < 2$, we have $x\in N$.
  Hence $d_2(x,z) \leq C\, d_1(x,z)^\alpha < \, \varep_1^\alpha = \varep_2$, as required.

  Now take $y\in M$ such that $d_2(x,y) < \rho\, \varep_2^{r_2}$.
  Then $d_2(y,z) \leq d_2(y,x) + d_2(x,z) < \rho\,\varep_2^{r_2} + \varep_2 < \varep_{0,1} + 1 < 2$, and so $y\in N$. So we can calculate that
  \begin{align*}
    d_1(x,y) & \leq \big( C\, d_2(x,y)\big)^\alpha\\
         & < \left(C\, \rho_2\right)^\alpha \varep_2^{r_2 \alpha} \\
         & = \left(\frac{\varep_2^{\alpha^2r_2/r_1}}{C} \right)^{\frac{r_1}{\alpha}}\\
         & \leq \left(\frac{\varep_2}{C}\right)^{\frac{r_1}{\alpha}} = \varep_1^{r_1}
  \end{align*}
  where we have used that $\alpha^2r_2 \geq r_1$ and $\varep_2<1$.
  Using the cusp condition for $K$ in $d_1$,
  \begin{align*}
    d_1(z,y) & < \varep_1 \quad (\text{and } y\in F) \\
    \Rightarrow 
    d_2(z,y) & \leq C\, d_1(z,y)^\alpha \\
         & < C\,\varep_1^\alpha = \varep_2.
  \end{align*}
  Hence $F$ has at worst polynomial cusps for $d_2$.
\end{proof}

Note that if we have \emph{uniformly} bi-H\"older equivalent metrics then we can dispense with the assumption of completeness; the proof goes through the same without the need to define the compact set $N$.

We also have the following simple result for transferring the other half of the cusp condition.

\begin{lemma}\label{lem: trans:nnf}
  Let $(M,d_1)$ be a locally compact, complete metric space, $F\subset M$ be closed and let $F$ have no narrow fjords using the metric $d_1$. Then if $d_2$ is another metric on $M$ that is locally bi-Lipschitz to $d_1$, then $F$ has no narrow fjords using the metric $d_2$.
\end{lemma}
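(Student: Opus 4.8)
The strategy is to mimic the structure of the proof of Lemma~\ref{lem:transfer_cusp_condition}, but the situation is much easier because the "no narrow fjords" condition involves only a single pair of inequalities between the distance $d(y,z)$ and a power of the path-length $\ell_d(\gamma)$, and a bi-Lipschitz change of metric distorts both quantities (distances and curve lengths) by at most a constant factor. First I would fix $x \in F$ and invoke the hypothesis: there exist $p \in \N$, a compact neighbourhood $K$ of $x$ in $F$, and a constant $D_1 > 0$ such that any $y,z \in K$ can be joined by a rectifiable curve $\gamma$ lying inside $F^\circ$ (except perhaps for finitely many points) with $d_1(y,z) \geq D_1\, \ell_{d_1}(\gamma)^p$. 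This is the curve I will reuse verbatim for $d_2$; in particular the set on which it fails to lie in $F^\circ$ is still finite and $F^\circ$ does not change, so the only thing to check is the length inequality.

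Next I would extract the local bi-Lipschitz constants. Since $d_2$ is locally bi-Lipschitz to $d_1$ and $M$ is locally compact, after shrinking $K$ to a smaller compact neighbourhood of $x$ (which is harmless, as the condition is phrased pointwise with a freely-chosen compact neighbourhood) I may assume there is a constant $L \geq 1$ with
\[
  \tfrac{1}{L}\, d_1(a,b) \leq d_2(a,b) \leq L\, d_1(a,b)
\]
for all $a,b$ in a compact set containing $K$ together with all the connecting curves $\gamma$ — here one uses completeness exactly as in the previous lemma, by first forming the compact "thickening" $N = \overline{\{x : d_1(F\cap K, x) < \delta \text{ and } d_2(F\cap K,x) < \delta\}}$ for a suitable $\delta$ and arguing that the curves $\gamma$, being short, stay inside $N$ (after possibly shrinking $K$ and $D_1$ further so that $\ell_{d_1}(\gamma)$ is uniformly bounded). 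The key estimate is then that lengths transform compatibly: for any rectifiable curve $\gamma$ contained in $N$,
\[
  \ell_{d_2}(\gamma) \leq L\, \ell_{d_1}(\gamma),
\]
which follows by taking suprema over partitions in the definition of arc-length and applying the bi-Lipschitz bound to each segment.

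Combining these, for $y,z \in K$ and the associated curve $\gamma$ I get
\[
  d_2(y,z) \geq \tfrac{1}{L}\, d_1(y,z) \geq \tfrac{1}{L}\, D_1\, \ell_{d_1}(\gamma)^p \geq \tfrac{1}{L}\, D_1\, \big(L^{-1}\ell_{d_2}(\gamma)\big)^p = \tfrac{D_1}{L^{p+1}}\, \ell_{d_2}(\gamma)^p,
\]
so the constants $p$ (unchanged), the shrunken $K$, and $D_2 := D_1 / L^{p+1}$ witness the no-narrow-fjords condition for $d_2$ at the point $x$. Since $x \in F$ was arbitrary, $F$ has no narrow fjords in the metric $d_2$.

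I expect the only genuine subtlety — and hence the main obstacle — to be the bookkeeping around ensuring that the connecting curves $\gamma$ actually lie in a region where the bi-Lipschitz comparison holds uniformly, since "locally bi-Lipschitz" only gives the comparison on a neighbourhood of each point, not globally. This is handled exactly as in Lemma~\ref{lem:transfer_cusp_condition}: shrink $K$ (and correspondingly decrease $D_1$) so that all the relevant curves have uniformly bounded $d_1$-length, confine them to the compact thickening $N$ on which a single Lipschitz constant $L$ works, and use completeness to guarantee $N$ is compact. Everything else is the routine length/distance estimate above; in particular, unlike the H\"older case there is no loss of exponent, which is why Lipschitz equivalence (rather than merely bi-H\"older) is the natural hypothesis here.
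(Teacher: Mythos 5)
Your argument is correct and is exactly the elaboration of the one-line justification the paper gives (that rectifiable paths may be taken to be Lipschitz, so a locally bi-Lipschitz change of metric distorts both the distance $d(y,z)$ and the length $\ell(\gamma)$ by at most a uniform constant on a compact neighbourhood, and the inequality survives with $D$ replaced by $D/L^{p+1}$). The bookkeeping you flag — confining the curves to a compact thickening on which a single Lipschitz constant works — is handled at the same level of detail as in the paper's proof of Lemma~\ref{lem:transfer_cusp_condition}, so there is nothing further to add.
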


This follows once recalling that rectifiable paths can be taken to be Lipschitz functions $I\to M$.

\begin{corollary}
  Take a manifold $M$ with a continuous Riemannian metric $g$, and a locally bi-Lipschitz chart, $\phi \colon U \xrightarrow{\sim} \R^n$ on $M$. 
  Here $\R^n$ is given the Euclidean metric, and $U$ the restriction of the geodesic metric $d^g$ on $(M,g)$.
  If $C \subset M$ is closed and satisfies the cusp condition for the metric $d^g$, then $F = \phi(C\cap U)\subset \R^n$ satisfies the cusp condition in the Euclidean metric.
\end{corollary}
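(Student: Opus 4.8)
The plan is to pull the whole problem into the chart and then invoke the two transfer results already proved, Lemmas~\ref{lem:transfer_cusp_condition} and~\ref{lem: trans:nnf}. Write $d_1 \coloneq d^g|_U$ and let $d_2$ be the metric on $U$ obtained by pulling back the Euclidean metric along $\phi$, so that $\phi\colon(U,d_2)\to(\R^n,d_{\mathrm{eucl}})$ is an isometry; both $d_1$ and $d_2$ induce the manifold topology on $U$. Since $C$ is closed in $M$ and $U$ is open, $C\cap U$ is closed in $U$, so $F=\phi(C\cap U)$ is closed in $\R^n$, and being a homeomorphism $\phi$ carries the interior $(C\cap U)^{\circ}$ taken in $U$ (which equals $C^{\circ}\cap U$ because $U$ is open), the boundary $\partial_U(C\cap U)=\partial_M C\cap U$, rectifiable curves and their lengths, and compact subsets, onto the corresponding data for $F$ in $\R^n$. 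So the statement to prove becomes: \emph{$C\cap U$ satisfies the cusp condition in $(U,d_2)$}. I would get there in three steps.

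First I would observe that $C\cap U$ satisfies the cusp condition in $(U,d_1)$. Both clauses of Definition~\ref{def:cusp_condition} quantify only over points of the closed set and over compact subsets of the ambient space, $d_1$ agrees literally with $d^g$ on pairs of points of $U$, and lengths of curves contained in $U$ are the same for $d_1$ as for $d^g$; so this is purely a matter of localising inside the open set $U$. For the no-narrow-fjords clause at a point $x\in C\cap U$ I would fix $s>0$ with $\overline{B_{d^g}(x,2s)}\subseteq U$ and shrink the witnessing compact neighbourhood $K$ of $x$ coming from the hypothesis on $C$ so far that, for $y,z\in K$, the bound $\ell_{d^g}(\gamma)\le (d^g(y,z)/D)^{1/p}$ makes every joining curve $\gamma$ too short to leave $B_{d^g}(x,2s)$; such $\gamma$ then lies in $C^{\circ}\cap U=(C\cap U)^{\circ}$ off finitely many points. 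For the polynomial-outward-cusps clause and a compact $K'\subseteq U$ I would apply the hypothesis on $C$ to $K'$ (compact in $M$) and then shrink $\varep_0$ — keeping $\rho\varep^{r}<\varep_0$ via Remark~\ref{wlog_constants_nice} — below the positive number $d^g(K',M\setminus U)$, forcing all the points $x$ and $y$ occurring in the condition to lie in $U$, so that ``$y\in F$'' in the sense of the hypothesis already means $y\in C\cap U$.

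Second, I would use that $\phi$ is a locally bi-Lipschitz chart, which is exactly the statement that $d_1$ and $d_2$ are locally bi-Lipschitz equivalent on $U$, hence also locally bi-H\"older equivalent; Lemma~\ref{lem:transfer_cusp_condition} then transfers the polynomial-outward-cusps clause and Lemma~\ref{lem: trans:nnf} transfers the no-narrow-fjords clause from $d_1$ to $d_2$. Third, $\phi$ being an isometry turns the resulting cusp condition for $C\cap U$ in $(U,d_2)$ into the cusp condition for $F$ in $(\R^n,d_{\mathrm{eucl}})$, which is what is asserted.

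The one delicate point, and the step I expect to be the main obstacle, is that Lemmas~\ref{lem:transfer_cusp_condition} and~\ref{lem: trans:nnf} are stated for a \emph{complete} metric space, whereas $(U,d_1)=(U,d^g|_U)$ need not be complete, since an $M$-geodesic realising $d^g$ between two points of $U$ may leave $U$. Completeness enters the proof of Lemma~\ref{lem:transfer_cusp_condition} only in order to manufacture, around a given compact set, a compact ``buffer'' neighbourhood on which the two metrics are \emph{uniformly} bi-H\"older equivalent, and the remark immediately after that lemma records that uniform bi-H\"older equivalence makes the completeness hypothesis superfluous. In the present situation that is precisely what is available locally: the cusp condition is verified one compact set at a time, and about any compact $K'\subseteq U$ one can take a compact neighbourhood $\widehat K$ in $U$ — for instance the $\phi$-preimage of a closed Euclidean ball around $\phi(K')$, which is compact because $(U,d_2)\cong(\R^n,d_{\mathrm{eucl}})$ — on which the locally bi-Lipschitz homeomorphism $\phi$ is, by compactness, \emph{uniformly} bi-Lipschitz. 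Re-running the arguments of Lemmas~\ref{lem:transfer_cusp_condition} and~\ref{lem: trans:nnf} with $\widehat K$ in the role of the buffer set then completes the transfer, and hence the corollary.
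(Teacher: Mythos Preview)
The paper states this corollary without proof, evidently regarding it as an immediate consequence of Lemmas~\ref{lem:transfer_cusp_condition} and~\ref{lem: trans:nnf}. Your argument follows that intended route and supplies the details the paper suppresses: the localisation step (that $C\cap U$ inherits both halves of the cusp condition in $(U,d^g|_U)$ from $C$ in $(M,d^g)$), and, more substantively, the observation that the completeness hypothesis in the transfer lemmas need not hold for $(U,d^g|_U)$. Your fix via the remark following Lemma~\ref{lem:transfer_cusp_condition}---pass to a compact buffer neighbourhood $\widehat K\subseteq U$ on which the locally bi-Lipschitz chart is uniformly bi-Lipschitz, and then run the transfer arguments there---is the natural one and is correct. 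In short, your proof is sound and in fact more careful than the paper at this point.
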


\begin{remark}\label{rem: standard:arguments}
  Note that by standard arguments\footnote{see eg. the answer by Beno\^it Kloeckner at \url{https://mathoverflow.net/a/236851/}}, every $C^1$ manifold with a continuous Riemannian metric $g$ has an atlas of charts that are locally bi-Lipschitz to Euclidean space, hence \emph{a fortiori} locally bi-H\"older.
\end{remark}

We can apply this (perhaps overly general) result to our setup, namely where we take a relatively compact smooth chart $U$ on the smooth manifold $M$. Observe that $C\cap U$ satisfies the cusp condition if $C$ satisfies it. Thus we obtain a regular and closed (in $U$!) subset which satisfies the no-narrow fjord condition, hence $C^\infty(C\cap U)$ is a \Frechet space with the topology from \ref{setup: WJ}.We have a commutative diagram of \Frechet spaces (cf.\ \ref{setup: WJ} and Appendix \ref{app: Whitney} for a description of the topologies)
\[
  \xymatrix{
    C^\infty_{\text{co}}(U)  \ar[d] & \ar[l]_-{\simeq} C^\infty_{\text{co}}(\R^n) \ar[d] \\
    C^\infty(C\cap U) &  \ar[l]_-{\simeq} C^\infty(F) \ar@/_1pc/[u]_{\text{Theorem \ref{thm:Frerick}}}
  }
\]
where the vertical arrows are surjective, and a continuous section of the restriction map $C^\infty(\R^n) \to C^\infty(F)$. Thus:
\begin{lemma}\label{lemma:splitting restriction in charts}
  Let $C \subset M$ be a 
  closed set satisfying the cusp condition and $U \xrightarrow{\simeq} \R^n$ be a smooth chart on $M$.
  Then the restriction map $C^\infty_{\text{co}}(U) \to C^\infty(C\cap U)$ of \Frechet spaces has a continuous section.
\end{lemma}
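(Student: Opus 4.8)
The plan is to transport the Euclidean-space splitting of Corollary~\ref{cor:cusps!} across the chart $\phi\colon U \xrightarrow{\simeq} \R^n$, i.e.\ to fill in the curved arrow of the commutative square displayed just above. Set $F \coloneq \phi(C\cap U) \subseteq \R^n$. Since $C$ is closed in $M$ and $\phi$ is a homeomorphism, $F$ is closed in $\R^n$; and since a smooth chart is in particular locally bi-Lipschitz for the geodesic distance $d^g$ and the Euclidean distance (compose $\phi$ with a locally bi-Lipschitz chart from Remark~\ref{rem: standard:arguments} and note that a smooth transition map is locally bi-Lipschitz on relatively compact pieces), the Corollary preceding Remark~\ref{rem: standard:arguments} shows that $F$ satisfies the cusp condition in the Euclidean metric. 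In particular $F$ is regular with no narrow fjords, so $C^\infty(F,\R)$ carries the \Frechet topology of \ref{setup: WJ}, and $C^\infty(C\cap U,\R)$ carries the \Frechet topology transported from it along the chart.

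First I would record the two chart-induced isomorphisms. Pullback along the diffeomorphism $\phi$ is a topological isomorphism $\phi^*\colon C^\infty_{\text{co}}(\R^n) \xrightarrow{\simeq} C^\infty_{\text{co}}(U)$, $g\mapsto g\circ\phi$, of \Frechet spaces for the compact-open $C^\infty$-topologies. The restriction $\phi|_{C\cap U}\colon C\cap U \to F$ is a bijection which, together with its inverse, is smooth in the sense of maps on regular sets, and precomposition with it is a topological isomorphism $\Phi\colon C^\infty(F,\R)\xrightarrow{\simeq} C^\infty(C\cap U,\R)$. These fit into the square: the two vertical restriction maps $\res_F\colon C^\infty_{\text{co}}(\R^n)\to C^\infty(F,\R)$ and $\res_{C\cap U}\colon C^\infty_{\text{co}}(U)\to C^\infty(C\cap U,\R)$ are continuous, linear and surjective (by the Whitney extension theorem together with \ref{setup: WJ}), and they satisfy the naturality identity $\res_{C\cap U}\circ\phi^* = \Phi\circ\res_F$, which is simply the statement that restricting a function and then precomposing with $\phi$ is the same as precomposing and then restricting.

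Now let $s\colon C^\infty(F,\R)\to C^\infty_{\text{co}}(\R^n)$ be the continuous linear section of $\res_F$ provided by Corollary~\ref{cor:cusps!}, and define
\[
  \sigma \coloneq \phi^*\circ s\circ \Phi^{-1}\colon C^\infty(C\cap U,\R)\longrightarrow C^\infty_{\text{co}}(U).
\]
This is a composite of continuous linear maps, and
\begin{align*}
  \res_{C\cap U}\circ\sigma
  &= \res_{C\cap U}\circ\phi^*\circ s\circ\Phi^{-1}
   = \Phi\circ\res_F\circ s\circ\Phi^{-1}\\
  &= \Phi\circ\Phi^{-1} = \id,
\end{align*}
so $\sigma$ is the desired continuous section of $\res_{C\cap U}$.

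The only step requiring genuine care is the claim that $\phi^*$ and $\Phi$ are compatible with the \Frechet structures involved --- equivalently, that the identification of $C^\infty(\,\cdot\,,\R)$ on a regular closed set with its space of Whitney jets (as in \ref{setup: WJ}) is natural under diffeomorphisms of the ambient manifold. This is where the chain rule, in the guise of the Fa\`a di Bruno formula, enters, together with the observation that compact convergence of a function and all of its partial derivatives on a dense interior is preserved under composition with a diffeomorphism and its inverse. Everything else is a formal diagram chase; note in particular that surjectivity of the vertical maps, while true, is not actually needed for the conclusion --- only the existence of the section $s$ is.
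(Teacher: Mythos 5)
Your proposal is correct and is essentially the paper's own argument: the paper proves the lemma by exactly the commutative square you describe, with the horizontal chart-induced isomorphisms, the vertical restriction maps, and the curved arrow given by the section from Theorem~\ref{thm:Frerick} (equivalently Corollary~\ref{cor:cusps!} after the identification of \ref{setup: WJ}), the desired section being the composite $\phi^*\circ s\circ\Phi^{-1}$. Your explicit attention to why $\Phi$ and $\phi^*$ respect the \Frechet topologies is a detail the paper leaves implicit, but it is the same route.
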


\section{Proof of Theorem A}\label{sect: proof of thm A}

In this section we provide the necessary details for the proof of Theorem A from the introduction.
As a first step, we consider spaces of sections on certain regular closed subsets of a Riemannian manifold. After these sections have been discussed, it will turn out that we only need to collect the bits and pieces from the previous sections to obtain the result.
Throughout this section we fix the following data:

\begin{setup}\label{setup: prep:ThmA}
From now on $M$ will be a $d$-dimensional $\sigma$-compact manifold with a fixed choice of Riemannian metric $g$, $E\rightarrow M$ a rank $m$-vector bundle and $C \subseteq M$ a 
 closed subset which satisfies the cusp condition with respect to the geodesic metric $d^g$.
 
 Further, we choose and fix auxiliary data as outlined in \ref{setup: PData}. In particular, denote the locally finite atlas by $(U_i, \varphi_i)_{i \in I}$, $C_i = U_i \cap C$ and the relatively compact charts by $V_i \subseteq U_i$ which satisfy $C \subseteq \bigcup_{i\in I} V_i$ . 
\end{setup}

The main idea of the proof of Theorem A is as follows: We take a section and use local triviality of the bundle to cut it into pieces which can be extended due to the cusp condition. Then we reassemble the pieces into a section by using a classical local to global approach with a partition of unity. 
In the next subsections we provide the necessary tools: First we define the spaces of sections, then we prepare the local to global result.

\subsection*{Smooth bundle sections on a closed set without narrow fjords}
\addcontentsline{toc}{subsection}{Smooth bundle sections on a regular closed set without narrow fjords}

Our first task is to construct a suitable topology for the vector space of sections into $E$ on $C$.

\begin{definition}\label{defn: sectsp:cl}
 For a regular closed set $C$ which has no narrow fjords we define \begin{align*}
 \Gamma_c (C,E) \coloneq \{\sigma \in C^\infty (C,E) \mid \pi_E \circ \sigma = \id_C \text{ and } \supp \sigma \text{ is compact}\}
 \end{align*}
 the \emph{compactly supported smooth sections on $C$}.
 Further, define 
 \begin{align*}
 \mathcal{I}_{c} (C,E) \coloneq \{\sigma \in \Gamma_c (M,E) \mid T^k_x \sigma = 0\quad \forall x \in C, k \in \N\}
 \end{align*}
 the subspace of all compactly supported sections vanishing (with all their derivatives) on $C$.\footnote{Here we use the notation $T^k = T \circ T \cdots \circ T (k$ times) to denote the $k$-fold iterated tangent functor $T$. Note that unpacking the definition of the iterated tangent functors (see e.g.\ \cite[Lemma 1.14]{MR1911979} for a local version) the vanishing of all iterated tangent functors at a point is equivalent to the vanishing of all iterated partial derivatives in any chart containing the point.}
\end{definition}

\begin{remark}\label{rem: closedideal}
 Clearly the pointwise operations turn $ \Gamma_c (C,E)$ and $\mathcal{I}_{c} (C,E)$ into vector spaces. One can argue as in the vector space case to see that $\mathcal{I}_{c} (C,E)$ is a closed subspace of $\Gamma_c (M,E)$ (with the fine very strong topology). Indeed using an atlas of $M$ we can use Lemma \ref{lem: top:sect} and Remark \ref{rem: loc:sect} to rewrite the problem in charts, where closedness follows from the argument in \ref{thm: whitney:ext}. (Avoiding localisation in charts, one can alternatively use Lemma \ref{lem: top:sect} together with \cite[Lemma 3.8]{MR1934608}.)
\end{remark}

\begin{setup}\label{setup: PData}
 Let $\cU = (U_i, \varphi_i)_{i \in I}$ be a locally finite atlas of relatively compact charts of $M$ such that $\varphi_i (U_i) = \R^{d}$ and there is a collection $\cV$ of open sets $V_i \subseteq \overline{V}_i \subseteq U_i, \ i \in I$ with  
 \begin{itemize}
  \item $C \subseteq \bigcup_{i \in I} V_i$
  \item $(\chi_i)_{i\in I}$ is a smooth partition of unity with $\supp \chi_i \subseteq V_i$
 \end{itemize}
We set $C_i \coloneq C \cap U_i$ for $i \in I$ and note that $\varphi_i (C_i) \subseteq \varphi_i (U_i) = \R^{d}$ is closed.
\end{setup}

The following proposition is (apart from the topological assertions and the fact that we are working with smooth functions and not jets) a folklore fact which easily follows from the Whitney extension theorem \ref{thm: whitney:ext} in charts and a gluing argument. Since this argument will be the basis of our construction we give full details.
 
\begin{proposition}[Whitney extension theorem for sections on a manifold]\label{prop: Whit:mfd}
The linear restriction map $\res_C \colon \Gamma_c (M,E) \rightarrow \Gamma_c (C,E)$ is surjective and endows $\Gamma_c (C,E)$ with a quotient topology such that 
\begin{equation}\label{eq: ex:vssect} 
\xymatrix@C1.9em{
0 \ar[r] & \mathcal{I}_c (C,E) \ar[r] \ar[d] & \Gamma_c (M,E) \ar[r]^{\res_C} \ar[d]^{\rho_\cU} & \Gamma_c (C,E) \ar[r] \ar[d]^r & 0 \\
 0 \ar[r] &   \bigoplus_i \mathcal{I}_{C_i} (U_i, \R^m) \ar[r] &  \bigoplus_{i} C^\infty_{\text{co}} (U_i, \R^m) \ar[r]^q & \bigoplus_{i} C^\infty (C_i, \R^m) \ar[r] & 0}
\end{equation}
is commutative with exact rows in the category of locally convex spaces.\\
Here $r\colon \Gamma_c (C,E) \rightarrow \bigoplus_{i \in I} C^\infty (C_i,\R^m)$ sends  $f \mapsto (\pr_2 \circ T\varphi_i \circ f|_{C_i})_{i\in I}$ and the spaces $C^\infty (C_i,\R^m)$ are topologised as in \ref{setup: WJ}.
\end{proposition}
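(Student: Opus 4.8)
The plan is to reduce the global statement to the local charts via the auxiliary data fixed in \ref{setup: PData}, and then invoke the Whitney extension theorem \ref{setup: whitney:ext} chart-by-chart. First I would unpack the right-hand column of \eqref{eq: ex:vssect}: the map $\rho_\cU\colon\Gamma_c(M,E)\to\bigoplus_i C^\infty_{\text{co}}(U_i,\R^m)$ is the natural one sending a section $\sigma$ to the tuple $(\pr_2\circ T\varphi_i\circ(\chi_i\sigma)\circ\varphi_i^{-1})_{i\in I}$ (i.e. localise each $\chi_i\sigma$ using the bundle trivialisation over $U_i$ induced by $\varphi_i$); since each $\chi_i\sigma$ has support in $\overline V_i\subseteq U_i$ compact, this lands in the direct sum, not just the product, and it is continuous by Lemma~\ref{lem: top:sect} and Remark~\ref{rem: loc:sect}. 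Similarly $r$ is defined in the statement, and one checks $q\circ\rho_\cU = r\circ\res_C$ by direct computation, since restriction commutes with multiplication by $\chi_i$ and with chart localisation; this gives commutativity of the right-hand square, and the left-hand square commutes because all maps are restrictions/localisations preserving the ideal conditions.

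Next I would establish exactness of the bottom row. For each fixed $i$, the sequence
\[
0 \to \mathcal{I}_{C_i}(U_i,\R^m) \to C^\infty_{\text{co}}(U_i,\R^m) \to C^\infty(C_i,\R^m) \to 0
\]
is exact in the category of \Frechet spaces: this is exactly the content of \ref{setup: WJ}, equation \eqref{eq: WSQ1}, applied to the closed regular set $\varphi_i(C_i)\subseteq\R^d$, which has no narrow fjords because $C$ does and the charts are locally bi-Lipschitz (so Lemma~\ref{lem: trans:nnf} applies) — note the cusp condition on $C$ transfers to $\varphi_i(C_i)$ by the corollary following Lemma~\ref{lem:transfer_cusp_condition}. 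A countable (or arbitrary) direct sum of short exact sequences of \Frechet spaces is again short exact in the locally convex category — algebraic exactness is clear termwise, and topological exactness (openness of the maps onto their images) for locally convex direct sums follows since the direct sum of open maps is open for the direct-sum topology. Hence the bottom row is exact.

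Then I would prove surjectivity of $\res_C$ together with exactness of the top row. Surjectivity is the gluing argument: given $f\in\Gamma_c(C,E)$, write $f = \sum_i \chi_i f$ (finite sum on the compact support of $f$); each $\chi_i f$, localised via $\varphi_i$, is a compactly supported smooth function on $\varphi_i(C_i)$, which by the bottom-row surjectivity extends to some $g_i\in C^\infty_{\text{co}}(U_i,\R^m)$, which we may take to have support in a slightly larger compact subset of $U_i$ still contained in $U_i$ (multiply by a cutoff that is $1$ on $\supp\chi_i$); transporting $g_i$ back to a section of $E$ over $U_i$ and extending by zero gives $\sigma_i\in\Gamma_c(M,E)$, and $\sigma\coloneq\sum_i\sigma_i$ is a well-defined compactly supported smooth section with $\res_C\sigma = \sum_i\chi_i f = f$. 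The kernel of $\res_C$ is $\mathcal{I}_c(C,E)$ by definition (vanishing of all iterated tangent functors on $C$ is testable in charts, as noted in the footnote to Definition~\ref{defn: sectsp:cl}), so the top row is algebraically exact; we then define the topology on $\Gamma_c(C,E)$ to be the quotient topology from $\res_C$, which makes the top row topologically exact on the left by construction and on the right because a quotient map is open.

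The main obstacle I expect is the bookkeeping around the direct sum versus the product and the compact-support conditions: one must check carefully that $\rho_\cU$ and $r$ genuinely land in the direct sum (using local finiteness of $\cV$ and compactness of supports), that the extension operators chosen chartwise can be arranged to preserve a uniform-enough support control so that the reassembled section is again compactly supported and smooth on all of $M$, and that the quotient topology on $\Gamma_c(C,E)$ coincides with the subspace topology inherited from $\bigoplus_i C^\infty(C_i,\R^m)$ via $r$ (so that $r$ is a topological embedding onto a closed subspace, matching the description in \ref{setup: WJ}). None of these steps is deep, but getting the compatibility of the two descriptions of the topology on $\Gamma_c(C,E)$ exactly right — quotient from above versus embedding from below — is where the care is needed, and it is what makes the commutative diagram genuinely useful for the subsequent proof of Theorem~A.
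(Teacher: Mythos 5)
Your overall strategy is the same as the paper's (chart-by-chart Whitney sequences via Lemma~\ref{lem: trans:nnf} and \ref{setup: WJ}, exactness of direct sums, partition-of-unity gluing for surjectivity, quotient topology on $\Gamma_c(C,E)$), but there is one concrete error in the setup of the diagram. You define $\rho_\cU(\sigma)_i = \pr_2\circ T\varphi_i\circ(\chi_i\sigma)\circ\varphi_i^{-1}$, i.e.\ you insert the partition of unity into the vertical map. With the maps $q$ (componentwise restriction to $C_i$) and $r$ (plain restriction, as fixed in the statement), the right-hand square of \eqref{eq: ex:vssect} then reads $q(\rho_\cU(\sigma))_i = (\chi_i\sigma)|_{C_i}$ versus $r(\res_C(\sigma))_i = \sigma|_{C_i}$, and these differ by the factor $\chi_i|_{C_i}$, which is not identically $1$ on $C_i$ wherever two or more charts overlap. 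So the square you claim commutes ``by direct computation'' does not commute. The fix is to take $\rho_\cU$ to be the plain restriction $\sigma\mapsto(\sigma|_{U_i})_{i}$ localised in the bundle charts, exactly as in Lemma~\ref{lem: top:sect}\,2.; this lands in the direct sum because $\cU$ is locally finite and $\supp\sigma$ is compact, it is a topological embedding, and commutativity of both squares is then immediate. The partition of unity belongs only in the surjectivity/gluing step --- which, tellingly, is where you yourself use it, and where your argument (extend $(\chi_i f)|_{C_i}$, cut off, push forward, sum) is a correct minor variant of the paper's (extend $f|_{C_i}$ first, then mix with the $\chi_j$ via the map $\mu$ of Lemma~\ref{lem: sm:mix}, which the paper reuses for the continuous splitting in Theorem~A).

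Two smaller remarks. First, the identification $\ker(\res_C)=\mathcal{I}_c(C,E)$ is not literally ``by definition'': it uses regularity of $C$ (if $\sigma$ vanishes on $C$ then it vanishes on the open dense subset $C^\circ$, hence all its derivatives vanish on $C^\circ$ and, by continuity, on $\overline{C^\circ}=C$); the paper instead deduces it from injectivity of $\rho_\cU$ together with exactness of the bottom row, which is why the correct $\rho_\cU$ matters there too. Second, your final worry --- that the quotient topology should coincide with the subspace topology induced by $r$ --- is not part of this proposition and should not be folded into its proof: the statement only requires continuity of $r$, which follows from commutativity and the universal property of the quotient topology. The paper explicitly defers the comparison of the two topologies to Lemma~\ref{lem: op:ext}, where it is obtained \emph{after} the continuous splitting of Theorem~A exists, and remarks that it is unclear whether the two topologies agree in the non-compact case without such a splitting.
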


\begin{proof}
Let us first deal with the lower row: Since $\varphi_i$ is a diffeomorphism, we can use precomposition by $\varphi_i$ to identify $C^\infty_{\text{co}} (U_i,\R^m) \cong C^\infty_{\text{co}} (\varphi_i(U_i),\R^m)$ and $C^\infty (C_i,\R^m) \cong C^\infty (\varphi_i(C_i),\R^m)$. Now $F_i :=\varphi_i (C_i)$ is a closed subset of the ambient space and $\varphi_i (U_i)$ is an open neighborhood of $F_i$. Moreover, since $C$ has no narrow fjords, Lemma \ref{lem: trans:nnf} implies that $F_i$ has no narrow fjords, whence \ref{setup: WJ} yields for every $i\in I$ an exact sequence 
$$\begin{xy}\xymatrix{
 0 \ar[r] &  \mathcal{I}_{C_i} (U_i, \R^m) \ar[r] &   C^\infty_{\text{co}} (U_i, \R^m) \ar[r]^q & C^\infty (C_i, \R^m) \ar[r] & 0}
\end{xy},$$
where we set $\mathcal{I}_{C_i} (U_i,\R^m) \cong \mathcal{I}_{F_i} (\varphi_i (U_i),\R^m)$, $C^\infty (C_i,\R^m) \cong C^\infty (F_i,\R^m)$ and suppress the identifications in the notation. Using that taking countable direct sums in the category of locally convex spaces is exact, we see that the lower row of \eqref{eq: ex:vssect} is exact.

By Lemma \ref{lem: top:sect} we have canonical embeddings $\rho_\cU$ of $\Gamma_c (M,E)$ into $\bigoplus_i \Gamma(E|_{U_i})$ and $\rho_\cV$ of $\Gamma_c (M,E)$ into $\bigoplus_i \Gamma (E|_{V_i})$. We identify $\Gamma (E|_{U_i}) \cong C^\infty_{\text{co}} (U_i ,\R^m)$ as in Remark \ref{rem: loc:sect} and suppress this in the notation. 
Since $\mathcal{I}_c (C, E)$ is clearly contained in the kernel of $\res_C$, we obtain a commutative diagram of vector spaces:
\begin{equation}\label{diag: res} 
 \xymatrix@C1.9em{
  0 \ar[r] &  \mathcal{I}_{c} (C, E) \ar@{-->}[d]^{\rho_\cU|_C} \ar[r] & \Gamma_c (M,E) \ar[r]^{\res_C} \ar[d]^{\rho_\cU} &\Gamma_c (C,E) \ar[d]^{r= (r_{C_i})_{i\in I}} & \\
  0 \ar[r] &   \bigoplus_i \mathcal{I}_{C_i} (U_i, \R^m) \ar[r] &  \bigoplus_{i} C^\infty_{\text{co}} (U_i, \R^m) \ar[r]^q & \bigoplus_{i} C^\infty (C_i, \R^m) \ar[r] & 0
  }
\end{equation} 
Here $r_{C_i} (f) \coloneq f|_{C_i},\ i\in I$ and $\rho_\cU|_C$ is induced from $\rho_\cU$ and realises $\mathcal{I}_{c} (C,E)$ as the closed subspace $\{(c_i)_{i\in I} \in \bigoplus_i \mathcal{I}_{C_i} (U_i, \R^m) \mid  c_i|_{U_i \cap U_j} = \Phi_{ij}(\id_M,c_j)|_{U_i\cap U_j}\}$.\footnote{Recall that point evaluations and postcomposition with fixed smooth functions are continuous in the compact open $C^\infty$-topology (see e.g.\ \cite{alas2012}). An easy adaption of the argument in \cite[proof of Lemma 3.21 (b)]{MR3328452} shows that the subspace indeed is a closed subspace of the direct sum.} Note that apart from the space $\Gamma_c (C,E)$ which is not yet topologised, \eqref{diag: res} is a commutative diagram in the category of locally convex spaces 

\paragraph{$\res_C$ is surjective.} Consider $f\in \Gamma_c (C,E)$ and choose a family $(g_i)_{i\in I} \in \bigoplus_{i\in I} C^\infty (U_i,\R^m)$ with $q((g_i)_i) = r (f)$. In general $(g_i)_{i\in I}$ will not be contained in the image of $\rho_\cU$, but we see that 
\begin{equation}\label{eq: cuts:equiv}
g_i|_{C_i\cap C_j} = f|_{C_i \cap C_j} = g_j|_{C_i \cap C_j} \quad \text{for every  }i,j\in I.
\end{equation}
Using the partition of unity from \ref{setup: PData}, we construct smooth functions on $M$ by continuing $\chi_j \cdot g_j|_{M\setminus U_j} \equiv 0$. Hence 
$$h_i \coloneq \sum_{j \in I} (\chi_j \cdot g_j)|_{V_i} \in C^\infty (V_i, \R^m),\quad i\in I,$$ 
By construction $(h_i)_{i\in I} \in \bigoplus_{i \in I} C^\infty (V_i,\R^m)$ and $h_i|_{V_i \cap V_j} = h_j|_{V_i\cap V_j}$ holds for every pair $(i,j)\in I^2$.
Thus $(h_i)_{i\in I}$ is contained in the image of $\rho_\mathcal{V}$ and we can choose $h \in \Gamma_c (M,E)$ with $\rho_V (h) = (h_i)_{i\in I}$.
Now \eqref{eq: cuts:equiv} implies that $h_i|_{V_i \cap C_j} =f|_{V_i \cap C_j}$. As the $V_i$ cover $C$ (see \ref{setup: PData}), we see that $\res_C (h)=f$. 
Thus $\res_C$ is surjective and we can endow $\Gamma_c (C,E)$ with the quotient topology, thus turning it into a locally convex space.

\paragraph{$r$ is continuous with respect to the quotient topology}
Follows directly from the commutativity of \eqref{diag: res} and the definition of the quotient topology. Note that $r$ is linear, whence \eqref{eq: ex:vssect} indeed is a commutative diagram in the category of locally convex spaces.

\paragraph{The upper row of diagram \eqref{eq: ex:vssect} is exact.}
In Remark \ref{rem: closedideal} we have seen that $\mathcal{I}_c (C,E)$ is a closed subspace and we know that $\res_C$ is surjective, open and continuous. Hence we only need to prove that its kernel coincides with $\mathcal{I}_c (C,E)$.
Consider $g \in \ker(\res_C)$. 
Since $\rho_\cU$ is injective, the commutativity of \eqref{diag: res} implies that $\rho_\cU (g)$ is contained in the kernel of $q$ and by exactness of the bottom row and the definition of $\rho_\cU|_C$ we must have $g \in \mathcal{I}_c (C,E)$. The converse inclusion is trivial and in conclusion \eqref{eq: ex:vssect} is exact in the category of locally convex spaces. Finally we remark that this implies that $\Gamma_c (C,E)$ is a Hausdorff space (as the quotient of a Hausdorff space modulo a closed linear subspace). 
\end{proof}

An important ingredient in the proof of the last lemma was the local to global argument using a partition of unity. We will see in Lemma \ref{lem: sm:mix} that this construction is continuous with respect to the function space topologies.

\begin{lemma}\label{lemma: r_is_injective}
The map $r \colon \Gamma_c (C,E) \rightarrow \bigoplus_{i} C^\infty (C_i,\R^m)$ is injective and its image is the closed subspace 
$$\mathcal{A} \coloneq \left\{(h_i) \in \bigoplus_{i \in I} C^\infty (C_i,\R^m) \middle| h_i|_{C_i \cap C_j} = \Phi_{ij} (\id_{C_j} , h_j)|_{C_i\cap C_j} \forall i,j \in \N\right\}.$$
If $C$ is compact, $r$ induces an isomorphism $\Gamma_c (C,E) \cong \mathcal{A}$.
\end{lemma}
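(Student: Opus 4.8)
The plan is to prove the three assertions — injectivity of $r$, the identification of $\im r$ with $\cA$, and the claim that $r$ is a topological isomorphism onto $\cA$ when $C$ is compact — by exploiting the commutative diagram \eqref{diag: res} from Proposition~\ref{prop: Whit:mfd} together with the gluing construction used there.

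\medskip

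\textbf{Injectivity of $r$.} First I would observe that $r = (r_{C_i})_{i \in I}$ with $r_{C_i}(f) = \pr_2 \circ T\varphi_i \circ f|_{C_i}$, so $r(f) = 0$ means that for each $i$ the section $f$ vanishes on $C_i = C \cap U_i$. Since the $V_i$ — and hence a fortiori the $U_i$ — cover $C$ by \ref{setup: PData}, this forces $f = 0$ on all of $C$. Alternatively, and more in keeping with the diagram, $\rho_\cU$ is injective and $r \circ \res_C = q \circ \rho_\cU$ is computed by first extending and then restricting in charts; chasing \eqref{diag: res} shows $\ker r \subseteq \res_C(\ker(q \circ \rho_\cU)) = \res_C(\cI_c(C,E)) = 0$. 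Either route is short.

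\medskip

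\textbf{Image of $r$ equals $\cA$.} The inclusion $\im r \subseteq \cA$ is just the cocycle/compatibility condition: if $\sigma \in \Gamma_c(C,E)$ then on overlaps $C_i \cap C_j$ the two local representatives of $\sigma$ are related by the bundle transition functions $\Phi_{ij}$, which is exactly the defining relation of $\cA$; this is an unwinding of definitions. For the reverse inclusion $\cA \subseteq \im r$ I would re-run the gluing argument from the surjectivity part of the proof of Proposition~\ref{prop: Whit:mfd}: given $(h_i) \in \cA$, the compatibility condition is precisely \eqref{eq: cuts:equiv}, so one can form $h_i' \coloneq \sum_j (\chi_j \cdot h_j)|_{V_i \cap C_i}$ using the partition of unity, obtain a well-defined compatible family over the $V_i$, hence (via $\rho_\cV$) a section $\sigma \in \Gamma_c(C,E)$ with $r(\sigma) = (h_i)$. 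The fact that $\cA$ is \emph{closed} in $\bigoplus_i C^\infty(C_i,\R^m)$ follows as in Remark~\ref{rem: closedideal} / the footnote to \eqref{diag: res}: point evaluations and postcomposition with the fixed smooth maps $\Phi_{ij}$ are continuous in the relevant topology, so $\cA$ is an intersection of closed conditions.

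\medskip

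\textbf{The compact case: $r$ is a topological isomorphism onto $\cA$.} With injectivity and the image identified, it remains to show that $r^{-1}\colon \cA \to \Gamma_c(C,E)$ is continuous when $C$ is compact. Here I would use the quotient topology on $\Gamma_c(C,E)$ from Proposition~\ref{prop: Whit:mfd}: $\res_C$ is open, and $r$ is continuous (already shown in that proof), so it suffices to produce a continuous right inverse, i.e.\ a continuous linear map $\cA \to \Gamma_c(M,E)$ lifting through $\res_C$. The gluing construction $(h_i) \mapsto$ (the section with $\rho_\cV$-image $(\sum_j \chi_j h_j|_{V_i})_i$) is manifestly linear, and when $C$ is compact the index set can be taken finite (only finitely many charts meet $\supp$), so this map is a finite linear combination of the continuous operations "restrict", "multiply by $\chi_j$", "transport by $\Phi_{ij}$" and "extend by zero", hence continuous into $\bigoplus_i C^\infty_{\mathrm{co}}(U_i,\R^m)$ and thence — via the closed embedding $\rho_\cV$ and Lemma~\ref{lem: top:sect} — into $\Gamma_c(M,E)$. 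Composing with $\res_C$ gives a continuous linear section of $r$, and an injective continuous linear map with a continuous linear section is a topological isomorphism onto its image.

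\medskip

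\textbf{Main obstacle.} The routine parts (injectivity, the cocycle inclusion, closedness of $\cA$) are genuinely routine. The one point requiring care is the continuity of the lifting map in the compact case: one must check that the ad hoc gluing $(h_i) \mapsto \sum_j \chi_j h_j$ really is continuous \emph{into the section space with its (fine) very strong topology}, not merely into each chart factor. Compactness of $C$ is what makes this work — it reduces the direct sum to a finite one and lets us invoke Lemma~\ref{lem: top:sect} (and the identification of Remark~\ref{rem: loc:sect}) to see that the target is a closed subspace of $\bigoplus_i \Gamma(E|_{V_i})$ onto which the finite sum of continuous chartwise operations lands; without compactness the fine very strong topology on $\Gamma_c(M,E)$ is strictly finer than the subspace topology from $\bigoplus_i C^\infty_{\mathrm{co}}(U_i,\R^m)$ and the argument breaks, which is exactly why the isomorphism statement is restricted to compact $C$.
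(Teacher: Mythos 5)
Your treatment of injectivity, of the inclusion $\im r \subseteq \cA$, and of the closedness of $\cA$ agrees with the paper's proof. For surjectivity onto $\cA$ the paper is more direct: a compatible family $(f_i)\in\cA$ glues pointwise, $f(x) \coloneq T\varphi_i^{-1} f_i(x)$ for $x\in C_i$, well-defined precisely because of the $\Phi_{ij}$-compatibility and compactly supported because only finitely many $f_i$ are nonzero and each $C_i\subseteq \overline{U}_i$; your partition-of-unity detour is unnecessary but does reproduce $(h_i)$ and so is harmless (modulo the slip that $\rho_\cV$ takes values in sections over the open sets $V_i$, not over $C\cap V_i$).

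The genuine gap is in the compact case. You propose a continuous linear right inverse $\cA\to\Gamma_c(M,E)$ of $r\circ\res_C$ built only from ``restrict, multiply by $\chi_j$, transport by $\Phi_{ij}$, extend by zero''. But each $h_j$ lives on the closed set $C_j=C\cap U_j$, so $\sum_j \chi_j h_j$ is a function on $C\cap V_i$ only; it is not the $\rho_\cV$-component of any section over the open set $V_i$, and extending it by zero across $\partial C$ into $V_i$ destroys smoothness. To lift into $\Gamma_c(M,E)$ you genuinely need an extension operator across the boundary of $C$ in each chart, i.e.\ Lemma \ref{lemma:splitting restriction in charts} and hence the cusp condition and Frerick's theorem --- at which point your argument becomes the proof of Theorem A (restricted to the finitely many relevant charts) combined with Lemma \ref{lem: op:ext}. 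That repair is available under the standing hypotheses of \ref{setup: prep:ThmA}, but it is not what you wrote, and it makes this lemma depend on the hard analytic input. The paper avoids extension operators here entirely: for $C$ compact, $\cA$ is a closed subspace of a finite product of \Frechet spaces, hence \Frechet and in particular ultrabornological, while $\Gamma_c(C,E)$ is webbed as a quotient of the webbed space $\bigoplus_i C^\infty_{\text{co}}(U_i,\R^m)$; De Wilde's open mapping theorem then shows that the continuous bijection $r$ onto $\cA$ is open. You should either adopt that route or explicitly insert the chartwise extension operators into your lifting map.
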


\begin{proof}
We already know that $r$ is continuous and it is clearly injective and takes its image in $\mathcal{A}$. Now every $C_i \subseteq C$ is
contained in the compact set $\overline{U}_i$. Hence for a family $(f_i) \in \mathcal{A}$, the obvious mapping
$$f \colon C \rightarrow E, \quad f(x) \coloneq T\varphi_i^{-1} f_i (x) , \text{ for } x \in C_i$$
makes sense and is a compactly supported smooth section over $C$, i.e.\ it is contained in $\Gamma_c(C,E)$. Hence $\mathcal{A}$ is the image of $r$. Again since point evaluation and postcomposition by fixed smooth mappings are continuous in the compact open $C^\infty$-topology, \eqref{eq: WSQ1}
shows that this is also the case for the \Frechet topology on $C^\infty (C,\R^m)$. An easy adaptation of the argument in \cite[proof of Lemma 3.21 (b)]{MR3328452} establishes closedness of $\mathcal{A}$.

Let us now assume that $C$ is compact. Then there are only finitely many $i\in I$ such that $C_i \neq \emptyset$ we conclude that $\mathcal{A}$ is a \Frechet space as a closed subspace of a finite product of such spaces. Furthermore, $\Gamma_c (M,E)$ is isomorphic to a closed subspace (cf.\ Lema \ref{lem: top:sect}) of the webbed space $\bigoplus_{i} C^\infty_{\text{co}} (U_i,\R^m)$, whence $\Gamma_c (C,E)$ is webbed as a qotient of a webbed space (\cite[Lemma 24.28]{MR1483073}). Now the open mapping theorem \cite[24.30]{MR1483073} shows that $r$ is open as a mapping into $\mathcal{A}$, whence $r$ induces the claimed isomorphism of locally convex spaces. 
\end{proof}

\begin{remark}
Note that the topology on $\Gamma_c (C,E)$ does not automatically turn $r$ into an isomorphism if $C$ is not compact.
Studying the above proof, the open mapping theorem is not applicable since $\mathcal{A}$ is not necessarily ultrabornological (as it is not clear that it would be a limit subspace of the direct sum).
In fact the authors do not know whether the quotient topology may be properly finer then the one induced by $r$ in the non-compact case.
\end{remark}

However, the problem mentioned in the last remark is not relevant for us, since we will only consider sets which allow continuous extension operators. In the presence of such a section, the two topologies coincide:
 
\begin{lemma}\label{lem: op:ext}
Assume that there exists a continuous section $s \colon \mathcal{A} \rightarrow \Gamma_c (M,E)$ of the map $r\circ \res_C$, then the quotient topology turns $r$ into an isomorphism $\Gamma_c (C,E)\cong \mathcal{A}$.
\end{lemma}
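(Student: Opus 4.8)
The plan is to exhibit an explicit continuous inverse to $r$ built out of the section $s$. From Lemma~\ref{lemma: r_is_injective} we already know that $r\colon \Gamma_c(C,E)\to \mathcal{A}$ is a linear bijection, and from Proposition~\ref{prop: Whit:mfd} that it is continuous for the quotient topology; so the entire content of the lemma is the continuity of $r^{-1}$, equivalently that $r$ is open onto $\mathcal{A}$. Equivalently again, one must show that the quotient topology coincides with the initial topology on $\Gamma_c(C,E)$ induced by $r$ from $\mathcal{A}$ (one inclusion being continuity of $r$, already in hand).

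The key step is to consider the composite $\res_C\circ s\colon \mathcal{A}\to \Gamma_c(C,E)$. It is continuous, being the composite of the continuous map $s$ with the quotient map $\res_C$, which is continuous by the very definition of the quotient topology. Moreover, since $s$ is a section of $r\circ \res_C$ we have $(r\circ\res_C)\circ s=\id_{\mathcal{A}}$, so $\res_C\circ s$ is a right inverse of $r$; as $r$ is injective this forces $\res_C\circ s = r^{-1}$. (Alternatively: applying $r$ to $\res_C\circ s\circ r$ gives $r\circ(\res_C\circ s\circ r)=(r\circ\res_C\circ s)\circ r=r$, and injectivity of $r$ yields $\res_C\circ s\circ r=\id_{\Gamma_c(C,E)}$.) Hence $r^{-1}$ is continuous, so $r$ is a topological isomorphism; linearity of $r^{-1}$ is automatic, being the set-theoretic inverse of a linear bijection, so no linearity hypothesis on $s$ is needed.

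I do not expect a genuine obstacle here: all the substantive work — surjectivity of $\res_C$, injectivity of $r$ and the identification $\im r=\mathcal{A}$, continuity of $r$, and closedness of $\mathcal{A}$ — is already established, and this lemma is a one-line diagram chase on top of it. The only point requiring a moment's care is invoking continuity in the correct direction, namely that $\res_C$ is continuous (as the quotient map), so that $\res_C\circ s$ is continuous; this is precisely what upgrades the section $s$ into a continuous inverse for $r$ and thereby collapses the potential gap between the two topologies flagged in the preceding remark.
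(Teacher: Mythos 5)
Your proof is correct and is essentially the paper's argument in unwound form: the paper notes that $r\circ\res_C$ is a quotient map (being a continuous surjection with continuous section $s$) and deduces continuity of $r^{-1}$ from the factorization $r^{-1}\circ(r\circ\res_C)=\res_C$, which is exactly your explicit identity $r^{-1}=\res_C\circ s$. All the supporting facts you invoke (bijectivity of $r$ onto $\mathcal{A}$, continuity of $r$ and of $\res_C$) are indeed already in place from Proposition~\ref{prop: Whit:mfd} and Lemma~\ref{lemma: r_is_injective}, so there is no gap.
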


\begin{proof}
Since $r\circ \res_C$ is continuous surjective and admits a (global) continuous section, it is a quotient map between locally convex spaces. As $r^{-1} \circ (r \circ \res_C) = \res_C$ we deduce that $r^{-1} \colon \mathcal{A} \rightarrow \Gamma_c (C,E)$ is continuous, whence $r$ induces and an isomorphism of locally convex spaces onto its image.
\end{proof}

Thus in the situation of Theorem A (to be proved in the end of the section) the topologies coincide.

\subsection*{Interlude: Patching by partition of unity}
\addcontentsline{toc}{subsection}{Interlude: Patching by partition of unity}
In this interlude, we discuss continuity properties for the map which patches mappings on a locally finite-covering by means of a partition of unity. 

\begin{setup}\label{setup: settingup}
Recall that for a given compact subset $K$ of $M$ only finitely many members of 
the locally finite open cover $\cU$ have a non-trivial 
intersection with $K$. Thus for each $i\in I$ we obtain a finite subset of 
$I$ by setting
\begin{displaymath}
 J_i \coloneq \{ j \in I \mid U_j \cap U_i \neq \emptyset\}  
 \end{displaymath}
\end{setup}

\begin{setup}\label{abuse_notation}
Fix $n\in \N$ and consider for $i\in I$ maps $f_j \in C^\infty (U_j,\R^n)$ for $j \in J_i$. Multiplying with the 
partition of unity \ref{setup: prep:ThmA}, we obtain for every such pair a smooth mapping 
$f_{ji} \coloneq \chi_j|_{V_j \cap U_i} \cdot f_j|_{V_j \cap U_i}$ defined on the (possibly empty) set $V_j \cap U_i$. Note that since $\supp \chi_j \subseteq V_j$, the mapping vanishes in a neighborhood of the boundary of $V_j \cap U_i$ in $U_i$. Thus we can extend $f_{ji}$ by $0$ to a smooth map on all of $U_i$ (or by a similar argument to all of $V_i$). In the following we will extend 
these mappings to all of $U_i$ (or similarly to $V_i$) and suppress the extension in the notation. 
\end{setup}

\begin{lemma}\label{lem: sm:mix}
 Using the notation from \ref{abuse_notation}, the mixing map 
 \begin{align*}
  \mu \colon \bigoplus_{i\in I} C^\infty_{co} (U_i,\R^n) & \longrightarrow \bigoplus_{i \in I} C^\infty_{co} (V_i,\R^n) \\
   (f_i)_{i \in I} & \mapsto \left(\sum_{j\in J_i} 
(\chi_j|_{V_i\cap V_j}) \cdot f_j|_{V_{i}\cap V_j}  \right)_{i\in I}
 \end{align*}
 is continuous linear. Its image is contained in the closed subspace $A \coloneq 
\{(g_i)_{i\in I} \mid g_j|_{V_i \cap V_j} = g_i|_{V_i\cap V_j} \forall 
i,j \in I \}$. 
\end{lemma}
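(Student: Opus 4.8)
The plan is to verify continuity by exploiting that the source and target are direct sums of the \Frechet spaces $C^\infty_{co}(U_i,\R^n)$ and $C^\infty_{co}(V_i,\R^n)$, and that a linear map out of a locally convex direct sum is continuous if and only if its restriction to each summand is continuous. So the first step is to fix $i_0 \in I$ and examine the composite
\[
  C^\infty_{co}(U_{i_0},\R^n) \hookrightarrow \bigoplus_{i\in I} C^\infty_{co}(U_i,\R^n) \xrightarrow{\ \mu\ } \bigoplus_{i\in I} C^\infty_{co}(V_i,\R^n).
\]
Given $f_{i_0} \in C^\infty_{co}(U_{i_0},\R^n)$, the image $\mu(f_{i_0})$ is the tuple whose $i$-th entry is $(\chi_{i_0}|_{V_i\cap V_{i_0}})\cdot f_{i_0}|_{V_i\cap V_{i_0}}$ when $i_0 \in J_i$ (equivalently $U_i\cap U_{i_0}\neq\emptyset$) and $0$ otherwise. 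Since $i_0 \in J_i$ holds only for the finitely many $i$ in $J_{i_0}$ (and for such $i$ the section $(\chi_{i_0} f_{i_0})|_{V_i\cap V_{i_0}}$ extends by $0$ to all of $V_i$ as noted in \ref{abuse_notation}), this image lands in the finite subsum $\bigoplus_{i\in J_{i_0}} C^\infty_{co}(V_i,\R^n)$, where the subspace topology agrees with the direct sum topology. So it suffices to check that for each such $i$, the map $f_{i_0} \mapsto (\chi_{i_0} f_{i_0})|_{V_i\cap V_{i_0}}$ (extended by zero) is continuous $C^\infty_{co}(U_{i_0},\R^n) \to C^\infty_{co}(V_i,\R^n)$.

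The second step is thus purely local: I would decompose that last map as restriction $C^\infty_{co}(U_{i_0},\R^n) \to C^\infty_{co}(V_i\cap V_{i_0},\R^n)$, followed by multiplication by $\chi_{i_0}$, followed by extension-by-zero $C^\infty_{co}(V_i\cap V_{i_0},\R^n) \to C^\infty_{co}(V_i,\R^n)$. Restriction to an open set is continuous for the compact-open $C^\infty$-topology, multiplication by a fixed smooth function is continuous linear (postcomposition/pointwise multiplication is continuous in this topology, cf.\ the facts from \cite{alas2012} already invoked in \ref{setup: whitney:ext} and Remark \ref{rem: closedideal}), and extension by zero is continuous because $\supp\chi_{i_0}\subseteq V_{i_0}$ is a proper closed subset, so the extended function and all its derivatives vanish near $\partial(V_i\cap V_{i_0})$ and one controls $C^k$-seminorms on any compact $K\subseteq V_i$ by the corresponding seminorms on the compact $K\cap\supp\chi_{i_0}$. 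Composing continuous linear maps, and then summing the finitely many of them, gives continuity of $\mu$ restricted to the $i_0$-summand; linearity is immediate from the formula, so $\mu$ is continuous linear.

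For the last assertion, I would observe that for any $i,j\in I$ and any $(f_i)_{i\in I}$, on $V_i\cap V_j$ the $i$-th component $\sum_{k\in J_i}(\chi_k f_k)|_{V_i\cap V_j}$ and the $j$-th component $\sum_{k\in J_j}(\chi_k f_k)|_{V_i\cap V_j}$ agree: a term indexed by $k$ contributes to the $i$-th component iff $U_k\cap U_i\neq\emptyset$ and is supported in $V_k$, so on $V_i\cap V_j$ both sums reduce to $\sum_{k}(\chi_k f_k)|_{V_i\cap V_j}$ ranging over the finitely many $k$ with $V_k\cap V_i\cap V_j\neq\emptyset$ — the constraints $U_k\cap U_i\neq\emptyset$ versus $U_k\cap U_j\neq\emptyset$ do not matter once we have further restricted to $V_i\cap V_j$. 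Hence $\im\mu\subseteq A$. That $A$ is closed follows exactly as in the analogous closedness statements already used (Remark \ref{rem: closedideal}, Lemma \ref{lemma: r_is_injective}, via \cite[proof of Lemma 3.21 (b)]{MR3328452}): $A$ is an intersection over pairs $(i,j)$ of kernels of the continuous linear maps $(g_i)\mapsto (g_i-g_j)|_{V_i\cap V_j}$. The step I expect to need the most care is the continuity of extension-by-zero, i.e.\ making precise that the vanishing of $\chi_{i_0}$ near the boundary lets one bound seminorms on compacts in $V_i$ that poke out of $V_i\cap V_{i_0}$; everything else is bookkeeping with finite subsums of direct sums.
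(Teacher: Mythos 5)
Your proof is correct and follows essentially the same route as the paper: continuity is reduced, via the universal property of the locally convex direct sum, to continuity of the local operators ``multiply by $\chi_j$ and extend by zero'', which the paper handles with the topological module structure of $C^\infty_{co} (U_j,\R^n)$ and the extension/restriction lemmas of \cite{math/0408008v1}, and your support argument showing $\im \mu \subseteq A$ is the paper's verbatim. The only differences are cosmetic: you organise the bookkeeping by source summand (restricting $\mu$ to each $C^\infty_{co}(U_{i_0},\R^n)$ and landing in a finite subsum), whereas the paper organises it by target component via auxiliary maps $m_i$ precomposed with a diagonal duplication map $B_{diag}$; and your ``extension by zero'' map should strictly be stated only on the closed subspace of functions supported in $\supp \chi_{i_0}$ (where the multiplication step lands), exactly the point you yourself flag and correctly resolve with the seminorm estimate over $K \cap \supp\chi_{i_0}$.
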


\begin{proof}
\textbf{The mapping $\mu$ makes sense.}
As argued in \ref{abuse_notation}, every component of $\mu (f_i)_{i \in I}$ is a smooth function as a finite sum of such functions. 
Note that every $i \in I$ appears only in finitely many of the 
sets $J_k, k \in I$. Thus every $f_i$ appears at most in finitely many of the sums of the definition of $\mu$, whence $\mu$ makes sense as a mapping 
between direct sums. Clearly $\mu$ is linear.

\paragraph{$\mu$ takes its image in $A$}
By construction we have $\supp \chi_k \subseteq V_k$. Hence if $\chi_k$ 
does not vanish on $V_i \cap V_j$ we must have $k \in J_i \cap J_j$. 
Thus the sum in $\sum_{k\in J_i} (\chi_k|_{V_k \cap V_i}) \cdot f_k|_{V_i \cap V_k}$ 
coincides on $V_i \cap V_j$ (up to vanishing 
summands) with the one summing over $J_j$. In conclusion, 
$\mu$ takes its image in $A$.

\paragraph{Continuity of the auxiliary mappings $m_i$}
Let us first 
fix $i\in I$ and consider the linear map
\begin{align*}
m_i \colon \bigoplus_{j \in 
J_i} C^\infty_{co} (U_j,\R^n) & \rightarrow C^\infty (V_i,\R^n)\\
(f_j)_j &\mapsto \sum_{j\in J_i} (\chi_j|_{V_i\cap V_j}) \cdot f_j|_{V_i \cap V_j} .
\end{align*}
As $J_i$ is finite and $C^\infty_{co} (V_i , \R^n)$ is a 
topological vector space it clearly suffices to establish smoothness for all of the mappings
\begin{align*}
  c_j\colon C^\infty_{co} (U_j,\R^n) &\rightarrow C^\infty_{co} (V_i, \R^n),\quad  j \in J_i\\ 
  f &\mapsto \chi_j|_{V_i\cap V_j} \cdot f|_{V_i \cap V_j}.
\end{align*}
Recall that the space $C^\infty_{co} (U_j,\R^n)$ is a topological $C^\infty_{co} (U_j,\R)$-module (see e.g.\ \cite[Corollary F.13]{math/0408008v1}). Thus the map $\kappa_j (f) \coloneq \chi_j|_{U_j} \cdot f$ is continuous, takes its image in the linear subspace $C^\infty_{\overline{V}_j} (U_j,\R^n) \subseteq C^\infty_{\text{co}} (U_j,\R^n)$ of smooth functions supported in $\overline{V}_j$.
Now \cite[Lemma 4.24 and Lemma 4.6]{math/0408008v1} 
extending functions in $C^\infty_{\overline{V}_j} (U_j, \R^n)$ by $0$ to all 
of $M$ and restricting then to $V_i\cap V_j$ yields a continuous linear map which, composed with $\kappa_j$, coincides with $c_j$. We conclude that $c_j$ and thus $m_i$ is continuous linear.

\paragraph{Continuity of $m$}
We define the mapping 
\[
\tilde{\mu} \coloneq \oplus_{i\in I} m_i \colon \bigoplus_{i \in I} \bigoplus_{j \in J_i} C^\infty_{co} 
(U_i ,\R^n) \rightarrow \bigoplus_{i \in I} C^\infty_{co} (V_i ,\R^n).
\]
This mapping is continuous linear, since the 
mappings $m_i$ are so by the previous step.
It follows from the universal property of the locally convex direct sum that 
\begin{align*}
 B_{diag} \colon \bigoplus_{i \in I} C^\infty (U_i, \R^n) &\rightarrow 
\bigoplus_{i\in I} \bigoplus_{\# \{k \in I \mid i \in J_k\}} C^\infty (U_i,\R^n)\\ & \cong 
\bigoplus_{i\in I} \bigoplus_{j \in J_i} C^\infty (U_j,\R^n)\\
(f_i)_{i\in I} &\mapsto\quad ( \oplus_{\# \{k \in I \mid i \in J_k\}} f_i)_{i\in I} \\ & \mapsto \quad ((f_j)_{j\in J_i})_{i\in I}
\end{align*}
is continuous linear (where due to the construction, there is a 
bijection between the index sets of both sums).
Now, we have $\mu = \tilde{\mu} \circ B_{diag}$ and thus $m$ is continuous linear as a composition of such mappings.
\end{proof} 

\subsection*{Global extensions of bundle sections on a closed set}
\addcontentsline{toc}{subsection}{Global extensions of bundle sections on a closed set}

We will now prove Theorem A from the introduction, whose statement we repeat here for convenience.

\begin{theorem}[Theorem A]\label{thmA}
Let $C\subseteq M$ be a closed set satisfying the cusp condition. Then the restriction map $\res_C \colon \Gamma_c (M,E) \rightarrow \Gamma_c (C,E)$ admits a continuous linear section $\mathcal{E}_C^M$.
\end{theorem}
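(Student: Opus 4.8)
The plan is to assemble the pieces already established in Sections \ref{sect: WElin}--\ref{sect: proof of thm A} into a single continuous linear section. The strategy mirrors the proof of Proposition~\ref{prop: Whit:mfd}: reduce to charts, use Frerick's splitting locally, and reassemble by a partition of unity — the crucial point being that every step is continuous linear on the relevant locally convex spaces.

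First I would construct, for each $i \in I$, a continuous linear splitting in charts. By Lemma~\ref{lemma:splitting restriction in charts} (applied via the relatively compact chart $\varphi_i\colon U_i \xrightarrow{\simeq} \R^d$, using that $C$ satisfies the cusp condition and hence so does $F_i = \varphi_i(C_i)$ by Lemmas~\ref{lem:transfer_cusp_condition} and \ref{lem: trans:nnf}), the restriction map $C^\infty_{\text{co}}(U_i,\R^m) \to C^\infty(C_i,\R^m)$ admits a continuous linear section $E_i$. Taking the direct sum, $\bigoplus_i E_i \colon \bigoplus_i C^\infty(C_i,\R^m) \to \bigoplus_i C^\infty_{\text{co}}(U_i,\R^m)$ is continuous linear (countable direct sums are exact and the direct-sum functor is continuous on the relevant categories). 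Precomposing with the continuous linear map $r\colon \Gamma_c(C,E) \to \bigoplus_i C^\infty(C_i,\R^m)$ of Proposition~\ref{prop: Whit:mfd}, we get a continuous linear map $\Gamma_c(C,E) \to \bigoplus_i C^\infty_{\text{co}}(U_i,\R^m)$ which lifts a section $\sigma$ to a family $(g_i)_i$ of local (chart-valued) extensions — but which will in general \emph{not} be a coherent family, i.e.\ not in the image of $\rho_\cU$.

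Next I would patch. The family $(g_i)_i$ satisfies $g_i|_{C_i \cap C_j} = g_j|_{C_i \cap C_j}$ (this is exactly the coherence recorded as \eqref{eq: cuts:equiv}, which holds because each $g_i$ restricts to the same section $\sigma$ on overlaps). Applying the mixing map $\mu$ of Lemma~\ref{lem: sm:mix} (after passing through the bundle trivialisations, identifying $\Gamma(E|_{V_i}) \cong C^\infty_{\text{co}}(V_i,\R^m)$ as in Remark~\ref{rem: loc:sect}), we obtain a coherent family $(h_i)_i$ with $h_i|_{V_i\cap V_j} = h_j|_{V_i\cap V_j}$, hence in the image of the canonical embedding $\rho_\cV\colon \Gamma_c(M,E) \hookrightarrow \bigoplus_i \Gamma(E|_{V_i})$; since $\rho_\cV$ is a closed embedding (Lemma~\ref{lem: top:sect}), its inverse on the image is continuous, producing $h \in \Gamma_c(M,E)$. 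The coherence \eqref{eq: cuts:equiv} together with $\sum_j \chi_j = 1$ forces $h|_{V_i \cap C_j} = \sigma|_{V_i \cap C_j}$ for all $i,j$, and since the $V_i$ cover $C$ this gives $\res_C(h) = \sigma$. Defining $\cE_C^M$ as the composite of all these continuous linear maps, it is continuous linear and a section of $\res_C$.

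The main obstacle I anticipate is \emph{bookkeeping the topologies rather than any genuine difficulty}: one must verify at each stage that the maps genuinely land in the claimed closed subspaces (the coherent families), that the embeddings $\rho_\cU$, $\rho_\cV$ are topological embeddings onto closed subspaces so that their partial inverses are continuous, and that the compatibility with bundle transition functions $\Phi_{ij}$ is preserved throughout — all of which is available from Lemma~\ref{lem: top:sect}, Remark~\ref{rem: loc:sect}, and the module/extension continuity results cited in the proof of Lemma~\ref{lem: sm:mix}. A secondary subtlety is that $\mu$ as stated in Lemma~\ref{lem: sm:mix} is phrased for $\bigoplus_i C^\infty_{co}(U_i,\R^n)$, so one should either first extend each $g_i$ trivially (as in \ref{abuse_notation}) or invoke the evident variant; this is routine. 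Once these identifications are in place, the section $\cE_C^M$ is simply the composite $\rho_\cV^{-1} \circ (\text{trivialise}) \circ \mu \circ (\text{trivialise})^{-1} \circ \big(\bigoplus_i E_i\big) \circ r$, and no further argument is needed.
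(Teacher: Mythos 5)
Your proposal is correct and follows essentially the same route as the paper: the composite $\rho_\cV^{-1}\circ \mu \circ \bigl(\bigoplus_i E_i\bigr)\circ r$ you write down is precisely the paper's diagram \eqref{diag: l:con}, with the chart-wise extensions supplied by Lemma~\ref{lemma:splitting restriction in charts} (the paper's Lemma~\ref{lem: multiFrerick}) and the verification that the result is a section resting on the coherence \eqref{eq: cuts:equiv} and $\sum_j\chi_j=1$, exactly as in the paper. The only detail you leave implicit is the trivial case $C_i=\emptyset$, which the paper handles by taking the zero extension.
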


\begin{proof}
We use the notation and data introduced in \ref{setup: prep:ThmA}.
For the proof we consider a commutative diagram of locally convex spaces (where the numbers indicate where the (continuous) linear map was constructed):

\begin{equation} \begin{aligned}
 \begin{xy}
  \xymatrix{
      \Gamma_c (C,E) \ar[r]^-{r}_-{\eqref{eq: ex:vssect}} \ar@/^1pc/@{-->}[dd]^{\mathcal{E}_C^M} &  \bigoplus_{i \in I} C^\infty (C_i)^m\ar[rr]^-{\text{extension}}_-{\text{Lemma \ref{lem: multiFrerick}}}  & & \bigoplus_{i \in I} C^{\infty}_{\text{co}} (U_i)^m  \ar[ldd]|-{\stackrel{\mu|^{\im(\rho_\cV)}}{\text{Lemma }\ref{lem: sm:mix}}} \ar[dd]^{\mu} \\
                &   &   &   \\
 \Gamma_c (M,E) \ar[uu]^{\res_C}   \ar[rr]^{\rho_\cV}   && \im(\rho_\cV) \ar[r]^-{\text{inc}} & \bigoplus_{i \in I} C^\infty_{\text{co}} (V_i,\R^m) 
  }
\end{xy}\end{aligned}\label{diag: l:con}
\end{equation}

We postpone the proof of the commutativity of \eqref{diag: l:con} to Lemma \ref{lem: multiFrerick} below, where also the extension map needed in the computation is defined.

Now $\mathcal{E}_C^M$ is defined via the right half of the diagram (using that $\rho_\cV$ is a topological embedding onto its image by Lemma \ref{lem: top:sect}). Since all the mappings in the definition are continuous and linear, $\mathcal{E}_C^M$ is a continuous linear section of $\res_C$.
\end{proof}

\begin{lemma}\label{lem: multiFrerick}
There exists a continuous linear extension map 
\[
\varep \colon \bigoplus_{i \in I}C^\infty(C_i)^m \rightarrow \bigoplus_{i\in I} C^\infty_{\text{co}} (\R^d)^m
\] 
which makes \eqref{diag: l:con} commutative.
\end{lemma}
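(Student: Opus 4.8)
The plan is to build the extension map $\varep$ componentwise. For each $i \in I$, recall from Proposition~\ref{prop: Whit:mfd} that $F_i = \varphi_i(C_i)$ is a closed subset of $\varphi_i(U_i) = \R^d$, and since $C$ satisfies the cusp condition with respect to $d^g$, Lemma~\ref{lem:transfer_cusp_condition} together with Lemma~\ref{lem: trans:nnf} (applied via the bi-Lipschitz, hence bi-H\"older, chart $\varphi_i$, using Remark~\ref{rem: standard:arguments}) shows that $F_i \subseteq \R^d$ satisfies the cusp condition in the Euclidean metric. Hence Corollary~\ref{cor:cusps!} applied in each of the $m$ coordinates (using the identification $C^\infty_{\text{co}}(W,\R^m) \cong C^\infty_{\text{co}}(W,\R)^m$) gives a continuous linear section $\varep_i \colon C^\infty(C_i,\R^m) \to C^\infty_{\text{co}}(\R^d, \R^m)$ of the restriction map $C^\infty_{\text{co}}(\R^d,\R^m) \to C^\infty(C_i,\R^m)$; equivalently, via $\varphi_i$, a continuous linear section of $C^\infty_{\text{co}}(U_i,\R^m) \to C^\infty(C_i,\R^m)$. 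Then I would set
\[
  \varep \coloneq \bigoplus_{i \in I} \varep_i \colon \bigoplus_{i \in I} C^\infty(C_i)^m \longrightarrow \bigoplus_{i \in I} C^\infty_{\text{co}}(\R^d)^m,
\]
which is continuous linear by the universal property of the locally convex direct sum, since each $\varep_i$ is.

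Next I would check commutativity of the diagram~\eqref{diag: l:con}. Start with $f \in \Gamma_c(C,E)$. Applying $r$ gives the family $(f_i)_{i\in I}$ with $f_i = \pr_2 \circ T\varphi_i \circ f|_{C_i} \in C^\infty(C_i)^m$, and applying $\varep$ produces $(g_i)_{i \in I}$ with $g_i = \varep_i(f_i) \in C^\infty_{\text{co}}(U_i)^m$, a smooth extension of $f_i$ to all of $U_i$. Now I trace this through $\mu$: by Lemma~\ref{lem: sm:mix}, the $i$-th component of $\mu((g_i)_i)$ is $\sum_{j \in J_i}(\chi_j|_{V_i \cap V_j})\cdot g_j|_{V_i \cap V_j}$. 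The key point is that $g_j$ and $g_i$ agree with $f$ on the overlaps $C_i \cap C_j$ (pulled back through the transition functions $\Phi_{ij}$), exactly as in the surjectivity argument in the proof of Proposition~\ref{prop: Whit:mfd}; this is precisely the content of equation~\eqref{eq: cuts:equiv} there. Because the $g_i$ are genuine extensions and the partition of unity is subordinate to $\cV$, the patched family $\mu((g_i)_i)$ lands in $A = \{(h_i) \mid h_i|_{V_i \cap V_j} = h_j|_{V_i\cap V_j}\} = \im(\rho_\cV)$ (identifying sections with their local representatives as in Remark~\ref{rem: loc:sect}), and its restriction to $C$ recovers $f$. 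In other words, $\rho_\cV^{-1} \circ \mu \circ \varep \circ r = \mathcal{E}_C^M$ is a well-defined section of $\res_C$; this is what makes the left triangle and the outer square commute. The factorisation $\mu|^{\im(\rho_\cV)}$ through $\im(\rho_\cV)$ is then automatic once the image is known to lie in $A$.

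The main obstacle I anticipate is the bookkeeping in the commutativity verification: one must carefully track the vector-bundle transition functions $\Phi_{ij}$ through the identifications $\Gamma(E|_{U_i}) \cong C^\infty_{\text{co}}(U_i,\R^m)$ (Remark~\ref{rem: loc:sect}), and confirm that a choice of \emph{local} extensions, each made independently in its own chart via Corollary~\ref{cor:cusps!}, still patches \emph{coherently} to a global section after multiplication by the partition of unity. The resolution is that coherence on the overlaps is only needed \emph{on $C$} (where all the $g_i$ agree with $f$), not on the full overlaps $U_i \cap U_j$, and the partition-of-unity sum $\sum_j \chi_j g_j$ produces a globally-defined smooth section regardless of the mismatch of the $g_j$ away from $C$ --- exactly the mechanism already used to prove surjectivity of $\res_C$ in Proposition~\ref{prop: Whit:mfd}. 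Since $\supp \chi_j \subseteq V_j$ and the sum is locally finite, each component is smooth on $V_i$, and the resulting family automatically satisfies the gluing condition defining $\im(\rho_\cV)$. Everything else --- continuity of $\varep$, of $\mu$ (Lemma~\ref{lem: sm:mix}), and of $\rho_\cV^{-1}$ on its image (Lemma~\ref{lem: top:sect}) --- is already in hand, so $\mathcal{E}_C^M$ is continuous and linear as a composite of such maps.
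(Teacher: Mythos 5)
Your proposal is correct and follows essentially the same route as the paper: build $\varep$ as the direct sum of chart-wise extension operators obtained from the cusp condition (you re-derive Lemma~\ref{lemma:splitting restriction in charts} from Corollary~\ref{cor:cusps!} and the metric-transfer lemmas, which is exactly how that lemma is proved), then verify commutativity by observing that the local extensions agree with $f$ on $C$ and that the partition-of-unity patching lands in $\im(\rho_\cV)$ and restricts back to $f$. The only detail the paper handles separately that you omit is the trivial case $C_i=\emptyset$, where one simply takes the zero map.
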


\begin{proof}
To construct $\varep$, we first construct continuous linear mappings $\varep_i \colon C^\infty (C_i)^m \rightarrow C^\infty_{\text{co}} (\R^d)^m$ and set $\varep \coloneq \oplus_{i \in I} \varep_i$. Thus $\varep$ will be continuous linear by the properties of the direct sum. For the construction we distinguish two cases depending on $i\in I$:

\paragraph{Case 1: $C_i = \emptyset$.} Since the chart does not intersect the domain of our map, only have to extend the empty function, whence $E_i$ is simply defined as the constant $0$-map in this case.

\paragraph{Case 2: $C_i\neq \emptyset$.} Due to our setup, the sets $C_i$ satisfy the assumptions made in the statement of Lemma \ref{lemma:splitting restriction in charts}. Hence in this case there is a continuous linear extension operator $C^\infty (C_i) \rightarrow C^\infty (U_i)$. We define $\varep_i$ as the $m$-fold product of this extension operator. 

This completes the construction of the extension map and all there is left is to prove that the diagram \eqref{diag: l:con} commutes. 
However, this is obvious from a trivial calculation if one recalls the following facts:
\begin{itemize}
\item For each pair $i,j \in I$ and $(f_k)_k \in \im r$ we have $f_i|_{C_i \cap C_j} = f_j|_{C_i \cap C_j}$,
\item the extension operators $\varep_i$ do not change the map on the $C_i$,
\item composition with $\rho_V^{-1}\mu$ is just mixing and restricting with a partition of unity and then reconstruction via the sheaf property of  smooth maps.
\end{itemize}
Composing again with the restriction map, the outer square of \eqref{diag: l:con} commutes.
\end{proof}

As a direct consequence of the above theorem, we obtain:

\begin{corollary}\label{splitting consequences}
If $C\subset M$ is a closed subset which satisfies the cusp condition in the geodesic metric on $M$, the exact sequence \eqref{eq: ex:vssect} splits, i.e.\ we have the isomorphim of topological spaces $\Gamma_c (M,E) \cong \mathcal{I}_c (C,N) \oplus \Gamma_c (C,N)$.
\end{corollary}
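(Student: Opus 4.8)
The plan is to deduce the corollary directly from Theorem~A (\ref{thmA}) together with the exactness of the upper row of \eqref{eq: ex:vssect}. Recall that a short exact sequence of locally convex spaces
\[
  0 \longrightarrow \mathcal{I}_c(C,E) \longrightarrow \Gamma_c(M,E) \xrightarrow{\ \res_C\ } \Gamma_c(C,E) \longrightarrow 0
\]
with $\res_C$ open, continuous and surjective (all established in Proposition~\ref{prop: Whit:mfd}) splits as a sequence of topological vector spaces as soon as $\res_C$ admits a continuous linear section. Theorem~A furnishes precisely such a section, namely $\mathcal{E}_C^M \colon \Gamma_c(C,E) \to \Gamma_c(M,E)$ with $\res_C \circ \mathcal{E}_C^M = \id$.

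First I would set $P \coloneq \id_{\Gamma_c(M,E)} - \mathcal{E}_C^M \circ \res_C$, a continuous linear idempotent on $\Gamma_c(M,E)$. Its image is contained in $\ker(\res_C) = \mathcal{I}_c(C,E)$ by exactness of the upper row of \eqref{eq: ex:vssect}, and conversely $P$ restricts to the identity on $\mathcal{I}_c(C,E)$ since $\res_C$ vanishes there; hence $P$ is a continuous linear projection of $\Gamma_c(M,E)$ onto $\mathcal{I}_c(C,E)$. Then the map
\[
  \Gamma_c(M,E) \longrightarrow \mathcal{I}_c(C,E) \oplus \Gamma_c(C,E), \qquad \sigma \mapsto \bigl(P(\sigma),\, \res_C(\sigma)\bigr),
\]
is continuous and linear, with continuous linear inverse $(\tau, f) \mapsto \tau + \mathcal{E}_C^M(f)$; one checks both composites are the identity using $P \circ \mathcal{E}_C^M = 0$ (which holds because $\res_C \circ \mathcal{E}_C^M = \id$) and $\res_C \circ P = 0$. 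This exhibits the claimed isomorphism of topological vector spaces $\Gamma_c(M,E) \cong \mathcal{I}_c(C,E) \oplus \Gamma_c(C,E)$, which is exactly the assertion that \eqref{eq: ex:vssect} splits.

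There is essentially no obstacle here: the corollary is a formal consequence of the existence of the continuous linear section from Theorem~A, and the only point requiring a word of care is that a continuous linear section of a short exact sequence of locally convex spaces automatically yields a topological (not merely algebraic) direct sum decomposition, which follows because the section, the restriction, and the induced projection $P$ are all continuous. I would also note in passing that, as the splitting is given by genuine continuous linear maps, the decomposition respects the locally convex structures, so the displayed isomorphism is one of locally convex spaces and not only of the underlying topological vector spaces. (One should also remark that here $N$ in the statement is a typo for $E$.)
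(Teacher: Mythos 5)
Your proof is correct and is exactly the argument the paper intends: the corollary is stated as a direct consequence of Theorem~A with no written proof, and your projection $P=\id-\mathcal{E}_C^M\circ\res_C$ is the standard way of spelling out that a continuous linear section of the continuous, open surjection $\res_C$ yields a topological direct sum decomposition. Your observation that $N$ should read $E$ in the statement is also correct.
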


\begin{remark}
As a consequence of Theorem \ref{thmA} and Lemma \ref{lem: op:ext} the map $r$ from Proposition \ref{prop: Whit:mfd} and \eqref{diag: l:con} is a topological embedding onto a closed subspace for every 
closed subset which satisfies the cusp condition.
\end{remark}

\section{Manifolds of mappings for manifolds with rough boundary}
 \label{sect: mfdmap:rb}
 
 In this section we recall some essentials on manifolds with rough boundary from \cite[Chapter 1.4]{GloecknerNeebBuch}. 
 Then we recall the classical construction of manifolds of mappings and how to apply them to the rough boundary case.
 
 \begin{definition}\label{defn: RBM}
We recall from \cite{GloecknerNeebBuch} (cf.\ \cite[Section 4]{alas2012}) that a \emph{manifold with rough boundary} modelled on a locally convex space~$E$ is a Hausdorff topological space $M$ with an atlas of smoothly compatible
homeomorphisms $\phi\colon U_\phi\to V_\phi$ from open subsets $U_\phi$ of $M$ onto locally convex subsets $V_\phi\subseteq E$ with dense interior (to distinguish from ordinary manifold charts, they are also called \emph{rough} $E$-charts).

If $x\in M$ we call $x$ a \emph{formal boundary point} if there is a rough $E$-chart $(U_\varphi, \varphi)$ around $x$ such that $\varphi(x) \in \partial \varphi(U_\varphi)$. Denote by $\partial M$ the \emph{(formal) boundary} of $M$, i.e.\ the set of formal boundary points of $M$. 

If each $V_\phi$ is open, $M$ is an ordinary manifold (without boundary). If each $V_\phi$ is relatively open in a closed hyperplane $\lambda^{-1}\!\left([0,\infty[\,\right)$, where $\lambda\in E'$ (the space of continuous linear functional on $E$), then $M$ is a \emph{manifold with smooth boundary}. In the case of a \emph{manifold with corners}, each $V_\phi$ is a relatively open
subset of $\lambda^{-1}_1\!\left([0,\infty[\,\right)\cap\cdots\cap \lambda^{-1}_n\!\left([0,\infty[\,\right)$, for suitable $n\in \N$ (which may depend on $\phi$) and linearly independent $\lambda_1,\ldots,\lambda_n\in E'$.
\end{definition}

The boundary of manifolds with rough boundary is characterised by the following.   

\begin{lemma}\label{lem: rb:boundary}
Let $M$ be a manifold with rough boundary and $(U_\varphi, \varphi)$, $(U_\psi,\psi)$ be rough $E$-charts around $x\in M$. Then $\varphi (x) \in \partial \varphi(U_\varphi)$ if and only if $\psi (x) \in \partial \psi (U_\psi)$.\footnote{A full proof is contained in the forthcoming \cite[Section 3]{GloecknerNeebBuch}. However here is a rough sketch: Argue by contradiction. In the chart where the image is in the boundary choose a convex neighborhood $W$. Now apply the Hahn-Banach theorem to separate the image of $x$ from the interior of $W$ by a functional $\lambda$. Taking the derivative $\kappa$ of the change of charts, one derives a contradiction by considering $\lambda \circ  \kappa^{-1}$ on $\kappa (W)$.}
\end{lemma}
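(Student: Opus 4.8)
The plan is to argue by contradiction, exactly as suggested in the footnote, so let me spell out the steps I would carry out. Suppose, towards a contradiction, that $\varphi(x)\in\partial\varphi(U_\varphi)$ but $\psi(x)$ lies in the interior of $\psi(U_\psi)$. Write $\kappa := \varphi\circ\psi^{-1}$ for the change of charts, which by hypothesis is a smooth diffeomorphism between locally convex subsets with dense interior, with smooth inverse $\kappa^{-1} = \psi\circ\varphi^{-1}$; by the chain rule for $C^\infty$-maps on regular sets (Lemma~\ref{lem: reschain}, applicable since rough $E$-charts have locally convex image) the differential $d\kappa(\varphi(x),\cdot)\colon E\to E$ is a well-defined continuous map, and differentiating $\kappa\circ\kappa^{-1}=\id$ shows it is a linear isomorphism of $E$ with inverse $d(\kappa^{-1})(\psi(x),\cdot)$.

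Next I would localize. Shrinking the charts, I may assume $\psi(U_\psi)$ contains a convex neighbourhood $W$ of $\psi(x)$ inside the interior of $\psi(U_\psi)$ (using that the interior is open and that $E$ is locally convex), and that $\kappa$ is defined on all of $W$. Since $\varphi(x)=\kappa(\psi(x))\in\partial\varphi(U_\varphi)$, the point $\varphi(x)$ is not an interior point of $\varphi(U_\varphi)\supseteq\kappa(W)$; in particular $\varphi(x)$ is not interior to the convex hull of $\kappa(W)$, equivalently not interior to $\kappa(W)$ after replacing $W$ by a smaller convex neighbourhood whose image we may take convex is not automatic — so instead I work directly: $\varphi(x)$ is a boundary point of $\varphi(U_\varphi)$, hence there is a net in $E\setminus\varphi(U_\varphi)$, a fortiori in $E\setminus\kappa(W)$, converging to $\varphi(x)$. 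By the Hahn--Banach separation theorem applied in the locally convex space $E$ to the point $\varphi(x)$ and the convex set $\varphi(U_\varphi)^\circ$ (whose closure contains $\varphi(x)$ since $\varphi(x)\in\partial\varphi(U_\varphi)$ and the image has dense interior), one obtains a nonzero $\lambda\in E'$ with $\lambda(\varphi(x)) \le \lambda(v)$ for all $v$ in the interior, i.e. $\varphi(x)$ is a support point.

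The contradiction then comes from transporting this support functional back through $\kappa^{-1}$. Consider $\mu := \lambda\circ d\kappa(\psi(x)^{\sharp})$ — more precisely $\mu := \lambda\circ \kappa'$ where $\kappa' := d\kappa(\psi(x),\cdot)\in\mathrm{GL}(E)$; then $\mu\in E'$ is nonzero because $\kappa'$ is invertible. Using a first-order Taylor / mean value estimate for $\kappa$ on the convex set $W$ (valid because $W$ is convex, so the usual rules of calculus including the Mean Value Theorem hold, as emphasized in the discussion after Definition of $C^k$-maps), one writes $\kappa(\psi(x)+h) = \varphi(x) + \kappa'(h) + o(h)$ for small $h$ with $\psi(x)+h\in W$; since $W$ is an open convex neighbourhood of $\psi(x)$, $h$ ranges over a neighbourhood of $0$ in $E$, and applying $\lambda$ gives $\lambda(\kappa(\psi(x)+h)) = \lambda(\varphi(x)) + \mu(h) + o(h)$. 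Choosing $h$ with $\mu(h)<0$ and scaling it small makes the left-hand side strictly less than $\lambda(\varphi(x))$; but $\kappa(\psi(x)+h)$ lies in $\kappa(W)\subseteq\varphi(U_\varphi)$, indeed in its interior for $h$ small since $\psi(x)+h$ is interior to $\psi(U_\psi)$ and $\kappa$ is an open map onto its image, contradicting that $\lambda$ supports $\varphi(U_\varphi)^\circ$ from below at $\varphi(x)$. Hence $\psi(x)\in\partial\psi(U_\psi)$, and by symmetry the equivalence holds. I expect the main obstacle to be the rigorous handling of the first-order expansion of $\kappa$ on a locally convex (not normed) model space — one cannot use norm estimates for the $o(h)$ term, so the argument must be phrased either via the fundamental theorem of calculus $\kappa(\psi(x)+h)-\varphi(x) = \int_0^1 d\kappa(\psi(x)+th,h)\,dt$ together with continuity of $d\kappa$ at $\psi(x)$, or directly via the openness of $\kappa$ combined with the fact that an interior point cannot map to a support point under an open map whose derivative is invertible; I would lean on the integral form to keep everything within Bastiani calculus on convex sets.
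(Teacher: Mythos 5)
Your overall strategy is the one the paper's footnote prescribes (argue by contradiction, separate with Hahn--Banach, use invertibility of the derivative of the change of charts), and the analytic core is handled correctly: reducing to the one-variable function $s\mapsto\lambda\bigl(\kappa(\psi(x)+sv)\bigr)$, or equivalently the integral form $\kappa(\psi(x)+h)-\varphi(x)=\int_0^1 d\kappa(\psi(x)+th,h)\,dt$, is exactly the right way to avoid norm estimates for the remainder. However, there is a genuine gap in the Hahn--Banach step: you separate $\varphi(x)$ from ``the convex set $\varphi(U_\varphi)^\circ$''. By Definition~\ref{defn: RBM} the image $V_\varphi=\varphi(U_\varphi)$ of a rough chart is only \emph{locally} convex with dense interior; neither $V_\varphi$ nor $V_\varphi^\circ$ need be convex (already the case $V_\varphi$ open, e.g.\ an annulus, or a set like $\{1<\lVert x\rVert\le 2\}$, shows this), so the separation theorem simply does not apply to that set. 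You in fact noticed the analogous difficulty for $\kappa(W)$ and stepped around it, only to run into the same obstruction one step later at the global level.

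The repair is precisely what the footnote indicates: place the convex neighbourhood in the chart whose image meets the boundary. By local convexity of $V_\varphi$ choose a convex neighbourhood $W'$ of $\varphi(x)$ in $V_\varphi$; its $E$-interior $(W')^\circ$ is a nonempty open convex set (nonempty because $V_\varphi^\circ$ is dense) which does not contain $\varphi(x)$, since $(W')^\circ\subseteq V_\varphi^\circ$ and $\varphi(x)\in\partial V_\varphi$. Hahn--Banach then yields $\lambda\neq 0$ with $\lambda(\varphi(x))\le\lambda(w)$ for all $w\in(W')^\circ$, hence for all $w\in W'\subseteq\overline{(W')^\circ}$ by continuity. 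Shrinking your convex neighbourhood $W\subseteq\psi(U_\psi)^\circ$ of $\psi(x)$ so that $\kappa(W)\subseteq W'$, your derivative argument goes through verbatim and produces the contradiction. A secondary remark: your claim that $\kappa(\psi(x)+h)$ lies in the $E$-interior of $\varphi(U_\varphi)$ because ``$\kappa$ is an open map onto its image'' only gives relative openness in $V_\varphi$; concluding interiority in $E$ from interiority in the other chart is essentially the statement of the lemma itself. Fortunately that claim is not needed, since membership of $\kappa(\psi(x)+h)$ in $W'$ already contradicts the separation inequality.
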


In essence Lemma \ref{lem: rb:boundary} shows that the formal boundary of $M$ arises from the topological boundary of the images of charts in the model space. 

\begin{remark}
By virtue of the chain rule Lemma \ref{lem: reschain} 2.\ we can define smooth mappings on manifolds with rough boundary in the usual way.

Direct products of manifolds with rough boundary, tangent spaces and tangent bundles\footnote{The definition of tangent vectors at $x$ as equivalence classes of vectors in the model space, i.e.\ $(\psi,x,v) \sim (\varphi,x,w)$ if and only if $d(\psi\circ \varphi^{-1}) (\varphi (x),v)=w$ where $\varphi,\psi$ are manifold charts, makes sense for a manifold with rough boundary and yields  the usual results and identifications for iterated tangent bundles, cf.\ also \cite[Section 2]{michor}.} as well as vector bundles may be defined as usual.

\end{remark}  
 
 We recall the compact open $C^\infty$-topology on the space $C^\infty (M,N)$ of smooth mappings from a manifold with rough boundary to a manifold without boundary.
 
 \begin{definition}\label{defn: topo:init}
 Let $M,N$ be manifolds with rough boundary.
 We define the compact-open $C^\infty$-topology on $C^\infty (M,N)$  as the initial topology induced by the mappings
  \begin{align*}
  T^k \colon C^\infty (M,N) & \rightarrow C_{\text{co}}(T^k M,T^k N)\quad \quad k \in \N_0,\\
  f & \mapsto T^k f
 \end{align*}
 where the right hand side carries the compact open topology. Denote by $C^\infty_{\text{co}} (M,N)$ the space with the compact open $C^\infty$-topology. 
\end{definition}

 \begin{remark} \label{rem: coinfty}
  If $N=\R^m$ the compact open $C^\infty$ topology coincides with topology described in \cite[Definition 3.21 and Section 4]{alas2012} by adapting the argument of \cite[Lemma 1.14]{MR1911979} to manifolds with rough boundary.
  In addition, if $M$ is a locally convex regular closed subset of $\R^m$ and $N=\R^m$, the compact open $C^\infty$-topology coincides with the identification topology from \ref{setup: WJ} (by an argument analogous to \ref{Whitney:fun}).
\end{remark}
 
 \begin{proposition}\label{prop: fcomp:cont}
  Let $f \colon N\rightarrow B$ and $g \colon A \rightarrow M$ be smooth mappings between finite-dimensional manifolds with rough boundary. Then
  \begin{align*}
  f^* g_* \colon C^\infty_{\text{co}}(M,N) & \rightarrow C^\infty_{\text{co}} (A,B) \\
   h & \mapsto f\circ h \circ g 
  \end{align*}
  is continuous.
 \end{proposition}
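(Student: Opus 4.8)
The plan is to reduce the statement to the already-established functoriality in charts plus the definition of the compact-open $C^\infty$-topology as an initial topology. First I would recall that, by Definition~\ref{defn: topo:init}, the topology on each mapping space $C^\infty_{\text{co}}(X,Y)$ is the initial topology with respect to the family $T^k\colon C^\infty(X,Y)\to C_{\text{co}}(T^kX,T^kY)$, $k\in\N_0$. Hence, to show $f^*g_*\colon C^\infty_{\text{co}}(M,N)\to C^\infty_{\text{co}}(A,B)$ is continuous, it suffices to show that $T^k\circ(f^*g_*)\colon C^\infty_{\text{co}}(M,N)\to C_{\text{co}}(T^kA,T^kB)$ is continuous for each $k$. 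Using the chain rule and functoriality of the iterated tangent functor (valid for manifolds with rough boundary, as noted in the \setup\ following Lemma~\ref{lem: rb:boundary}), one has $T^k(f\circ h\circ g)=T^kf\circ T^kh\circ T^kg$, so that $T^k\circ(f^*g_*) = (T^kf)_*\circ(T^kg)^*\circ T^k$. Since $T^k\colon C^\infty_{\text{co}}(M,N)\to C_{\text{co}}(T^kM,T^kN)$ is continuous by definition of the topology, it remains to prove that the map $(T^kf)_*\circ(T^kg)^*\colon C_{\text{co}}(T^kM,T^kN)\to C_{\text{co}}(T^kA,T^kB)$, $h\mapsto T^kf\circ h\circ T^kg$, is continuous on the spaces carrying the plain compact-open topology.

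The remaining statement is therefore the purely topological fact that pre- and post-composition with fixed continuous maps are continuous operations for the compact-open topology. Concretely, I would argue in two steps. For post-composition $(T^kf)_*$: given a subbasic open set $\lfloor L,\mathcal{O}\rfloor=\{\,\phi\mid \phi(L)\subseteq\mathcal{O}\,\}$ in $C_{\text{co}}(T^kA,T^kB)$ with $L\subseteq T^kA$ compact and $\mathcal{O}\subseteq T^kB$ open, its preimage under $(T^kf)_*$ is $\{\,\psi\mid (T^kf)(\psi(L))\subseteq\mathcal{O}\,\}=\lfloor L,(T^kf)^{-1}(\mathcal{O})\rfloor$, which is open since $T^kf$ is continuous. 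For pre-composition $(T^kg)^*$: the preimage of $\lfloor L,\mathcal{O}\rfloor$ (now with $L\subseteq T^kA$ compact, $\mathcal{O}\subseteq T^kN$ open) under $h\mapsto h\circ T^kg$ is $\{\,h\mid h(T^kg(L))\subseteq\mathcal{O}\,\}=\lfloor (T^kg)(L),\mathcal{O}\rfloor$, which is open since $(T^kg)(L)\subseteq T^kM$ is compact as the continuous image of a compact set. Composing these two continuous maps gives continuity of $(T^kf)_*\circ(T^kg)^*$, and hence of $T^k\circ(f^*g_*)$ for every $k$, which by the universal property of the initial topology yields continuity of $f^*g_*$.

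The only genuinely non-routine point is to make sure the iterated tangent functor is honestly functorial and continuous in this setting: one needs $T(h_1\circ h_2)=Th_1\circ Th_2$ and continuity of $T$ on manifolds that may have rough boundary (dense-interior locally convex charts). This is exactly what is recorded in the \setup\ paragraph after Lemma~\ref{lem: rb:boundary} — tangent bundles and the usual identifications carry over because the chain rule Lemma~\ref{lem: reschain}(2) applies to locally convex sets — together with Remark~\ref{rem: coinfty} for the identification of the topology in the local model. So I expect the main (modest) obstacle to be bookkeeping: verifying that $T^kg$ and $T^kf$ are well-defined continuous maps between the iterated tangent bundles $T^kA,T^kM,T^kN,T^kB$ (which themselves are manifolds with rough boundary), rather than any deep analytic difficulty. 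Once that is in hand, the argument is the standard exponential-law-free proof that composition is continuous for the compact-open $C^\infty$-topology.
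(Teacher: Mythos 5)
Your argument is correct and is precisely the ``usual proof'' that the paper invokes: the paper's own proof of Proposition~\ref{prop: fcomp:cont} consists of the single remark that the standard argument for manifolds without boundary (citing \cite[Lemma 5.5]{Wockel13}) carries over unchanged, and what you have written out --- reduction via the initial topology to $T^k\circ(f^*g_*)=(T^kf)_*\circ(T^kg)^*\circ T^k$, functoriality of $T^k$ via the chain rule for locally convex regular sets, and the elementary subbasis computations for pre- and post-composition in the compact-open topology --- is exactly that standard argument. You have also correctly identified the only point needing care in the rough-boundary setting, namely that $T^k$ remains functorial and produces continuous maps between the iterated tangent bundles, which the paper's setup following Lemma~\ref{lem: rb:boundary} guarantees.
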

 
 \begin{proof}
 The usual proof for manifolds without boundary (see e.g.\ \cite[Lemma 5.5.]{Wockel13}) carries over without any changes.
 \end{proof}

 \begin{lemma}\label{top: embedding}
 The initial topology turns the map 
 \begin{align*}
 \mathcal{T} \colon C^\infty_{\text{co}} (M,N)& \rightarrow \prod_{k \in\N_0} C_{\text{co}}(T^k M,T^k N)\\ 
 f& \mapsto (T^k f)_{k \in \N_0}
 \end{align*}
 into a topological embedding with closed image.
 \end{lemma}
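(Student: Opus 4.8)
The map $\mathcal{T}$ is the product of the maps $T^k$ defining the initial topology, so it is continuous by construction, and it is a topological embedding essentially by the definition of the initial topology: a basis of neighbourhoods of $f$ in $C^\infty_{\text{co}}(M,N)$ is obtained by pulling back finitely many basic open sets from the factors $C_{\text{co}}(T^kM,T^kN)$, which is precisely the trace on the image $\mathcal{T}(C^\infty_{\text{co}}(M,N))$ of the product topology. (One has to note that $\mathcal{T}$ is injective, which is clear since already $T^0 = \mathrm{id}_{C^\infty(M,N)}$, recovering $f$ itself.) So the only substantive point is that the image is \emph{closed}.

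To see closedness, I would characterise the image. A family $(g_k)_{k\in\N_0}$ with $g_k \in C_{\text{co}}(T^kM,T^kN)$ lies in $\mathcal{T}(C^\infty_{\text{co}}(M,N))$ if and only if (i) $g_0 \in C^\infty(M,N)$ (not merely continuous), and (ii) $g_{k+1} = T g_k$ for all $k$, where here one must read "$T g_k$" appropriately: the compatibility should be phrased using the canonical bundle projections and the flip/identification maps for iterated tangent bundles, so that what is really being asserted is $\pi_{T^{k+1}N}\circ g_{k+1} = g_k \circ \pi_{T^{k+1}M}$ together with the statement that $g_{k+1}$ is fibrewise linear over $g_k$ and is itself continuous — in other words $g_{k+1}$ is the tangent map of $g_k$. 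The plan is to show each of these conditions cuts out a closed subset of $\prod_k C_{\text{co}}(T^kM,T^kN)$, so the image, being their intersection, is closed.

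Concretely I would proceed chart-wise. Fix relatively compact rough charts on $M$ and charts on $N$; in such charts $T^kM$ and $T^kN$ are (locally) products of a piece of the model space with Euclidean-type factors, and the compact-open topology on $C_{\text{co}}(T^kM,T^kN)$ is controlled by evaluation on compact sets. The condition "$g_{k+1}$ is the tangent map of $g_k$" becomes, in charts, a condition of the form $g_{k+1}(x,v) = (g_k(x), dg_k(x,v))$ on the relevant domain; since evaluation maps $\mathrm{ev}_K\colon C_{\text{co}}(T^kM,T^kN)\to \ldots$ are continuous and the finite-dimensional derivative operator $g_k\mapsto dg_k$ is continuous for the $C^\infty$-topology (which is what the $(g_j)_{j\le k+1}$ already record), the locus where this identity holds is closed. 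Passing to the (countable, locally finite) cover and taking the intersection over all $k$ preserves closedness. The condition that $g_0$ be \emph{smooth} rather than just continuous is the limit of these compatibility conditions: a continuous $g_0$ all of whose successive "formal tangent maps" $g_k$ exist and are continuous is smooth by the very definition of smoothness via iterated differentials (using Lemma~\ref{lem: reschain}, part 2, for the rough-boundary chain rule to get well-definedness in charts). Assembling these observations gives that $\mathcal{T}(C^\infty_{\text{co}}(M,N))$ is closed.

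\textbf{Main obstacle.} The routine parts are injectivity and the embedding property; the genuine work is the bookkeeping for closedness of the image — in particular, stating the compatibility conditions between $g_k$ and $g_{k+1}$ correctly in terms of the canonical natural transformations of iterated tangent functors (projections and the canonical flips), and verifying in local charts that each such condition is closed in the compact-open topology. One must also be slightly careful that the charts involved have relatively compact domains so that compact-open control is effective, and that the rough-boundary chain rule (Lemma~\ref{lem: reschain}, case 2) legitimises the chart computations on the locally convex model sets with dense interior; this is exactly the place where the locally-convex-with-dense-interior hypothesis on the charts of $M$ is used.
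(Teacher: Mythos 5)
Your treatment of injectivity and the embedding property matches the paper's (both follow directly from the definition of the initial topology together with $T^0=\mathrm{id}$), but your strategy for closedness of the image has a genuine gap. You propose to exhibit the image as an intersection of closed conditions, among them ``$g_0$ is smooth'' and ``$g_{k+1}$ is the tangent map of $g_k$''. Neither of these is a closed condition in $\prod_k C_{\text{co}}(T^kM,T^kN)$ by the soft arguments you invoke: the smooth maps are not closed (indeed they are dense) in the continuous maps with the compact-open topology, and the relation ``$g_{k+1}=Tg_k$'' presupposes that $g_k$ is differentiable, which is exactly what is not yet known for a limit point of the image. The conditions that genuinely are closed under point evaluation (compatibility with the bundle projections, fibrewise linearity over $g_k$) are necessary but far from sufficient, so the intersection you describe is strictly larger than $\mathcal{T}(C^\infty_{\text{co}}(M,N))$ until an analytic input is supplied.

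The missing ingredient --- and the actual content of the paper's proof --- is the classical convergence theorem: if a net $f^\alpha$ of smooth maps satisfies $T^kf^\alpha \to g_k$ uniformly on compact sets for every $k$, then $g_0$ is $C^1$ with $Tg_0=g_1$, and inductively $g_k=T^kg_0$ for all $k$. This is proved in charts via the fundamental theorem of calculus (write $f^\alpha(x+th)-f^\alpha(x)=\int_0^1 df^\alpha(x+sth,th)\,\mathrm{d}s$ and pass to the limit on both sides), and it is here, not in any closedness-of-conditions argument, that one uses that the chart images are locally convex regular sets, so that the integral representation along segments is available. Your closing remark that ``a continuous $g_0$ all of whose formal tangent maps exist and are continuous is smooth by the very definition'' begs the question: the definition requires that $d(g_0|_{C^\circ})$ exists and extends continuously, and showing that the difference quotients of $g_0$ converge to the principal part of $g_1$ is precisely the step your proposal omits. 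With that argument inserted, the rest of your outline (chart-wise verification on relatively compact domains, Lemma~\ref{lem: reschain}) goes through and essentially recovers the paper's proof.
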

 
 \begin{proof}
 By definition of an initial topology, the map $\mathcal{T}$ is a topological embedding. Let now $(T^kf^\alpha)_{k\in \N_0, \alpha \in I}$ be a net in the image of $\mathcal{T}$ which converges to $(f_k)_{k \in \N_0}$.
 If we can prove that $f_k = T^k f_0$ holds for every $k\in \N$ then the image of $\mathcal{T}$ is closed. Clearly we can verify the formula $f_k = T^k f_0$ locally in charts. As the sequence converges with respect to the compact open $C^\infty$-topology, the usual inductive proof using the fundamental theorem of calculus (\cite[Theorem 1.5 and Lemma 1.7]{MR1911979} which is valid on locally convex regular subsets of $\R^d$!) carries over without any changes, see e.g.\ \cite[Lemma 5.13 and Theorem 5.14]{Wockel13}.
 \end{proof}

As a consequence of the above identification, we obtain the following completeness and metrisation results (which are well known in the case of a manifold without boundary).
 
 \begin{lemma}\label{lem: met:top}
 Let $M,N$ be manifolds with rough boundary, such that $M$ is locally compact\footnote{In contrast to manifolds without boundary, manifolds with rough boundary need not be locally compact. For example, recall that regular locally convex subsets of $\R^d$ are in general not locally compact, e.g.\ $\{(0,0)\} \cup \{(x,y) \in \R^2 \mid x >0\}$ in $\R^2$.} and $\sigma$-compact and $N$ metrisable and modelled on a metrisable space. Then $C^\infty_{\text{co}} (M,N)$ is metrisable.
 \end{lemma}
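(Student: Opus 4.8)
The plan is to reduce the statement to the classical fact that $C_{\mathrm{co}}(X,Y)$ is metrisable whenever $X$ is hemicompact and $Y$ is metrisable, applied to the building blocks $X=T^kM$, $Y=T^kN$ appearing in the embedding of Lemma~\ref{top: embedding}.

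First I would invoke Lemma~\ref{top: embedding}: the map $\mathcal{T}$ is a topological embedding of $C^\infty_{\mathrm{co}}(M,N)$ into the \emph{countable} product $\prod_{k\in\N_0}C_{\mathrm{co}}(T^kM,T^kN)$. Since a subspace of a metrisable space is metrisable and a countable product of metrisable spaces is metrisable, it then suffices to show that each factor $C_{\mathrm{co}}(T^kM,T^kN)$ is metrisable, for which I would verify the two hypotheses of the classical metrisation criterion on the factors separately.

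For the source, I would argue by induction on $k$ that $T^kM$ is again a manifold with rough boundary which is locally compact and $\sigma$-compact: in a rough chart it is modelled on a set of the form $V_\phi\times\R^{(2^k-1)d}$ with $V_\phi\subseteq\R^d$ a locally convex subset with dense interior (here one uses that $M$, being locally compact, is automatically finite-dimensional); such a product is locally compact precisely because $M$ --- hence $V_\phi$ --- is, and $\sigma$-compactness is inherited because a locally compact $\sigma$-compact manifold is second countable while $T^kM\to T^{k-1}M$ is locally trivial with second-countable fibre. A locally compact, $\sigma$-compact Hausdorff space is hemicompact, so $T^kM$ carries a cofinal exhaustion $K_1\subseteq K_2\subseteq\cdots$ by compacta. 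For the target, I would again induct on $k$: $T^kN$ is a manifold modelled on the metrisable space $F^{2^k}$ (with $F$ a metrisable model space of $N$), hence Hausdorff and locally metrisable; since $N$ is metrisable it is paracompact, and taking preimages under $T^kN\to T^{k-1}N$ of a locally finite open cover of $T^{k-1}N$ by trivialising charts exhibits $T^kN$ as a locally finite union of open metrisable subsets, which is paracompact by the sum theorem for paracompactness and therefore metrisable by Smirnov's metrisation theorem (compare the analogous statements for manifolds of mappings in \cite{michor}).

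Finally, with an exhaustion $(K_n)$ of $T^kM$ and a metric $\rho$ on $T^kN$ in hand, I would record that
\[
  d(f,g)\;=\;\sum_{n\in\N}2^{-n}\,\frac{\sup_{x\in K_n}\rho\!\left(f(x),g(x)\right)}{1+\sup_{x\in K_n}\rho\!\left(f(x),g(x)\right)}
\]
defines a metric on $C_{\mathrm{co}}(T^kM,T^kN)$ inducing the compact-open topology, which completes the argument. The point I expect to be most delicate is the metrisability of the iterated tangent bundle $T^kN$: unlike in the finite-dimensional case this does not follow formally from ``$N$ metrisable and modelled on a metrisable space'', and one must genuinely pass through paracompactness of an infinite-dimensional bundle (local-to-global/sum theorem plus Smirnov). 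The source side is routine, but it does use essentially the standing hypothesis that $M$ itself --- and not merely its model space --- is locally compact, since manifolds with rough boundary need not be locally compact in general.
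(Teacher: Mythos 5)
Your argument is correct and follows essentially the same route as the paper: embed $C^\infty_{\text{co}}(M,N)$ via Lemma~\ref{top: embedding} into the countable product $\prod_{k}C_{\text{co}}(T^kM,T^kN)$, check that $T^kM$ is locally compact and $\sigma$-compact (hence hemicompact) and that $T^kN$ is metrisable, and conclude by the classical metrisation criterion for the compact-open topology together with the fact that countable products and subspaces of metrisable spaces are metrisable. The only difference is one of packaging: the paper simply cites \cite{EG54} for the metrisability of $T^kN$ and \cite[Proposition 5.10 e)]{Wockel13} for the metrisability of each factor, whereas you reprove these ingredients by hand --- your paracompactness-plus-Smirnov argument for $T^kN$ is the standard proof of the cited result, modulo shrinking the locally finite open cover to a locally finite closed one before invoking the sum theorem for paracompactness.
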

 
 \begin{proof}
  For convenience let $d$ be the dimension of $M$. Since $M$ is locally compact, for every manifold chart $(\varphi, U)$ the domain $U$ is locally compact, whence $\varphi (U)$ is locally compact subset of $\R^d$. As $\R^d$ is second countable, $\varphi (U)$ is locally compact and second countable, whence $\sigma$-compact. 
Using the canonical atlas for the iterated tangent bundle, we see that locally over $U$ we get a bundle trivialisation $T^k M \supseteq T^k U \cong \varphi (U) \times (\R^d)^{2^k-1}$. Thus $T^kM$ is $\sigma$-compact for every $k \in \N_0$.
Since $N$ is metrisable and modelled on a metrisable space, $T^kN$ is metrisable by \cite{EG54}. Thus the spaces $C_{\text{co}}(T^kM,T^kN)$ are metrisable by \cite[Proposition 5.10 e)]{Wockel13} whence the embedding from Lemma \ref{top: embedding} indentifies $C^\infty_{\text{co}} (M,N)$ as a subspace of a metrisable space. 
 \end{proof}
 
 \begin{corollary}\label{cor: complete}
 Let $M$ be a locally compact manifold with rough boundary and $F$ a \Frechet space. Then $C^\infty_{\text{co}} (M,F)$ is a \Frechet space.
 \end{corollary}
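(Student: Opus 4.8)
The plan is to deduce this directly from Lemma~\ref{lem: met:top} together with the completeness result Lemma~\ref{top: embedding}, specialised to the target $N = F$. Since $F$ is a \Frechet space, it is metrisable and modelled on itself (a metrisable space), so Lemma~\ref{lem: met:top} applies and shows that $C^\infty_{\text{co}}(M,F)$ is metrisable. It is moreover a vector space under pointwise operations, and the compact-open $C^\infty$-topology is visibly a vector topology, so it only remains to check completeness and local convexity.

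First I would observe that $C^\infty_{\text{co}}(M,F)$ is locally convex: by Definition~\ref{defn: topo:init} its topology is initial with respect to the maps $T^k \colon C^\infty(M,F) \to C_{\text{co}}(T^kM, T^kF)$, and each target $C_{\text{co}}(T^kM,T^kF)$ is a space of continuous maps into the locally convex space $T^kF \cong F \times (F)^{2^k-1}$ with the compact-open topology, hence locally convex; an initial topology with respect to linear maps into locally convex spaces is locally convex. Thus $C^\infty_{\text{co}}(M,F)$ is a metrisable locally convex space, and to conclude it is \Frechet it suffices to show it is complete.

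For completeness, I would use the embedding $\mathcal{T}\colon C^\infty_{\text{co}}(M,F) \to \prod_{k\in\N_0} C_{\text{co}}(T^kM, T^kF)$ of Lemma~\ref{top: embedding}, which is a topological embedding with \emph{closed} image. Each factor $C_{\text{co}}(T^kM,T^kF)$ is complete: $T^kM$ is locally compact and $\sigma$-compact (as shown in the proof of Lemma~\ref{lem: met:top}), $T^kF$ is a \Frechet space, and the space of continuous functions from a locally compact $\sigma$-compact space to a complete locally convex space, with the compact-open topology, is complete (uniform convergence on compact sets). Hence the product $\prod_k C_{\text{co}}(T^kM,T^kF)$ is complete, and a closed subspace of a complete space is complete, so $C^\infty_{\text{co}}(M,F)$ is complete. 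Being metrisable, locally convex and complete, it is a \Frechet space.

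The main obstacle is really just the bookkeeping of identifying the factors $C_{\text{co}}(T^kM,T^kF)$ as complete locally convex spaces — one must note that $T^kF$ is again a \Frechet space (iterated tangent bundles of a \Frechet space are finite products of copies of it), and invoke the standard fact that $C_{\text{co}}(X,Y)$ is complete when $X$ is locally compact $\sigma$-compact and $Y$ is complete; everything else is an immediate application of the already-established Lemmas~\ref{top: embedding} and~\ref{lem: met:top}. Alternatively, one could avoid invoking completeness of the $C_{\text{co}}$ factors directly and instead cite the known \Frechet-space structure of $C^\infty_{\text{co}}(U, F)$ for $U$ an open subset of $\R^d$ and patch over a countable atlas, but the route via Lemma~\ref{top: embedding} is cleaner.
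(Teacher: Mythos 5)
Your proposal is correct and follows essentially the same route as the paper: metrisability via Lemma~\ref{lem: met:top}, then completeness by realising $C^\infty_{\text{co}}(M,F)$ as a closed subspace (Lemma~\ref{top: embedding}) of the complete product $\prod_k C_{\text{co}}(T^kM,T^kF)$. The only cosmetic difference is that you justify completeness of the factors via local compactness of $T^kM$, whereas the paper notes $T^kM$ is metrisable and hence a $k$-space before citing the completeness of $C_{\text{co}}$; both justifications are valid.
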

 
 \begin{proof}
 It is well known that the pointwise operations turn $C^\infty_{\text{co}} (M,F)$ into a topological vector space (cf.\ \cite[Section 2]{alas2012}).
 By Lemma \ref{lem: met:top} the space $C^\infty_{\text{co}} (M,F)$ is metrisable. In addition, \cite{EG54} implies that every $T^kM$ is metrisable, whence a $k$-space. Now $T^kF\cong F^{2^k}$ is complete and we infer from \cite[Remark 3.2 (a)]{MR1934608} that $C_{\text{co}} (T^kM,T^kF)$ is complete. Now Lemma \ref{top: embedding} identifies $C^\infty_{\text{co}} (M,F)$ with a closed subspace of the complete space $\prod_{k \in \N_0} C_{\text{co}} (T^kM,T^kF)$, whence $C^\infty_{\text{co}} (M,F)$ is a \Frechet space. 
 \end{proof}
 
 Finally we turn to smooth sections of bundles over a manifold with rough boundary.
 
 \begin{setup}\label{setup: sect:loc}
 Let $p \colon E \rightarrow M$ be a vector bundle with typical fibre $F$.
 Assume that $F$ is a \Frechet space and $M$ is a compact manifold with rough boundary. Then we define 
 $$\Gamma (M,E) \coloneq \{\sigma \in C^\infty (M,E) \mid p \circ \sigma = \\id_M\},$$
 and endow it with the subspace topology induced by $C^\infty_{\text{co}} (M,E)$. 
 
 Note that since $M$ is compact, we can choose an open cover $(W_i,\kappa_i)_{1\leq i \leq n}$ of domains of bundle trivialisations for $E$ and denote by $E|_{U_i}$ the restricted bundle over $U_i$. Then define the map 
 \begin{align*}
 \rho \colon \Gamma (M,E) & \rightarrow \prod_{1 \leq i \leq n} \Gamma (W_i,E|_{W_i)})\\ 
 \sigma & \mapsto (\sigma|_{W_i})_i.
 \end{align*}
 By Lemma \ref{prop: fcomp:cont} the map $\rho$ is continuous (as each of its components are given by mappings $(\iota_i)^*$, where $\iota_i \colon W_i \rightarrow M$ is the inclusion). Clearly $\rho$ is injective, linear and identifies $\Gamma (M,E)$ with the subspace $\mathcal{C}\coloneq \{(\gamma_i)_i \mid \gamma_i|_{W_i \cap W_j} = \gamma_j|_{W_i\cap W_j}\}$. Since the the evaluation map is continuous by \cite[Proposition 3.20]{alas2012}, $\mathcal{C}$ is closed. Working with subbasic neighborhoods, one can also prove that $\rho$ is open onto its image. We refer to \cite[Lemma 6.4]{Wockel13} for details.
 
 Now we obtain isomorphisms $\Gamma (W_i , E|_{W_i}) \rightarrow C^\infty_{\text{co}} (W_i,F),\ \sigma \mapsto \pr_2 \circ \psi_i \circ \sigma$ of topological vector spaces.
 Thus Corollary \ref{cor: complete} implies that $\Gamma (M,E)$ is a \Frechet space as a closed subspace of a direct product of \Frechet spaces.
 \end{setup}

Having discussed the topology on the manifold of mappings, we will now construct an infinite-dimensional manifold structure on $C^\infty_{\text{co}} (M,N)$ for $M$ a compact manifold with rough boundary and $N$ a manifold without boundary. 
If $M$ is a manifold with corners, such a construction (even for $M$ non-compact) can be found in \cite{michor}. Our proof follows the general idea of loc.~cit.\ but we avoid using an instance of the so called $\Omega$-Lemma. 
For the rest of this section $M$ will be a compact manifold with rough boundary and $N$ will be a \Frechet manifold without boundary which admits a local addition.

\begin{definition}
A manifold $N$ admits a local addition, if there is a \emph{local addition} $\Sigma_N$ on $N$, i.e.\ a smooth mapping $\Sigma_N \colon TN \supseteq \Omega \rightarrow N$  on an open $0$-neighborhood $\Omega$, such that 
\begin{enumerate}
\item $\Sigma_N \circ 0_N = \id_N$, where $0_N \colon N \rightarrow TN$ is the $0$-section.
\item $(\pi_{N}|_{\Omega}, \Sigma_N) \colon \Omega\rightarrow N \times N$ induces a diffeomorphism onto an open neighborhood of the diagonal in $N \times N$.
\end{enumerate}
\end{definition} 

Using the local addition, we obtain canonical charts for the mapping space:

\begin{setup}[Canonical charts]\label{setup: can:charts}
For $f \in C^\infty (M,N)$ we let $f^*TN$ be the pullback bundle of the tangent bundle of $N$ with associated bundle map $F\colon f^*TN \rightarrow TN$.
Via the bundle map $F$, we can identify the \Frechet space of sections $\Gamma (M,f^*TN)$ (cf.\ \ref{setup: sect:loc}) with 
$$C_f^\infty (M,TN) \coloneq \{g \in C^\infty (M,N) \mid \pi_N \circ g = f\} \subseteq C^\infty_{\text{co}} (M,TN)$$
in the subspace topology. In the following we will suppress this (harmless) identification without further notice.
Then define the sets 
\begin{align*}
V_f & \coloneq \{g \in C^{\infty} (M,E) \mid g(M) \subseteq \Omega\} \cap C^\infty_f (M,TN), \\
U_f & \coloneq \{g \in C^\infty (M,N) \mid (f,g)(M) \subseteq (\pi,\Sigma_N) (\Omega)\} \subseteq C^\infty_{\text{co}} (M,N)
\end{align*}
and note that both are open in the compact open $C^\infty$-topology. 
Now Proposition \ref{prop: fcomp:cont} implies that $\varphi_f \colon U_f \rightarrow W_f, g \mapsto (\pi_N, \Sigma_N)^{-1} \circ (f,g)$ is a homeomorphism with inverse $\varphi_f^{-1} (\tau) = (\Sigma_N)_* (\tau)$.
\end{setup}

\begin{theorem}\label{thm: mm:rb}
Let $M$ be a compact manifold with rough boundary and $N$ a metrisable manifold modelled on a \Frechet space which admits a local addition.
The atlas of canonical charts $(U_f,\varphi_f)_{f\in C^\infty (M,N)}$ turns $C^\infty_{\text{co}} (M,N)$ into a \Frechet manifold modelled on spaces of sections $\Gamma (M, f^*TN)$.
The manifold structure does not depend on the choice of local addition.
\end{theorem}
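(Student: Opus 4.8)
The plan is to follow the classical manifold-of-mappings construction (as in \cite{michor,Wockel13}) and verify the atlas axioms one chart-change at a time, carefully checking that nothing breaks when $M$ has rough boundary rather than corners. First I would check that the canonical charts $(U_f,\varphi_f)$ from \ref{setup: can:charts} have open domains that cover $C^\infty_{\text{co}}(M,N)$: coverage is immediate since $f\in U_f$ (because $(f,f)(M)\subseteq (\pi_N,\Sigma_N)(\Omega)$ by property~2 of a local addition, the image being a neighbourhood of the diagonal), and openness was already observed in \ref{setup: can:charts} via Proposition~\ref{prop: fcomp:cont}. By that same proposition $\varphi_f$ is a homeomorphism onto the open set $W_f=V_f\subseteq C^\infty_f(M,TN)\cong\Gamma(M,f^*TN)$, and the latter is a \Frechet space by \ref{setup: sect:loc} (using compactness of $M$ and Corollary~\ref{cor: complete}); so each chart takes values in an open subset of a \Frechet space, as required.

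Next comes the heart of the matter: smoothness of the chart changes $\varphi_g\circ\varphi_f^{-1}$ on their overlap. Writing this out, $\varphi_f^{-1}(\tau)=(\Sigma_N)_*(\tau)$ and then $\varphi_g$ post-composes with $(\pi_N,\Sigma_N)^{-1}\circ(g,-)$, so the transition map is push-forward by the smooth fibrewise map $\theta_{fg}\colon \mathcal{O}\to TN$, $\theta_{fg}(v)=(\pi_N,\Sigma_N)^{-1}(g(\pi_N(v)),\Sigma_N(v))$, defined on a suitable open neighbourhood $\mathcal{O}\subseteq TN$ of the zero section over the relevant subset. The standard argument shows $\tau\mapsto \theta_{fg}\circ\tau$ is smooth between the section spaces. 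Here is where I would diverge from \cite{michor}: rather than invoke an $\Omega$-lemma, I would reduce to charts on $M$ and $N$ and use that push-forward by a smooth map, viewed locally as $C^\infty_{\text{co}}(K,\mathbb{R}^n)\supseteq V\to C^\infty_{\text{co}}(K,\mathbb{R}^p)$ for $K=\varphi(U)$ a locally convex regular subset of $\mathbb{R}^d$, is Bastiani-smooth; this is exactly the kind of statement that holds on regular subsets by the chain rule Lemma~\ref{lem: reschain}~2.\ (legitimate because $K$ is locally convex) and the fundamental theorem of calculus on such subsets cited in the proof of Lemma~\ref{top: embedding}. The key point needing care is that all the bundle and chart identifications used in \ref{setup: sect:loc} and \ref{setup: can:charts} are through continuous linear (indeed smooth) maps, so smoothness in local section coordinates transfers to smoothness of $\varphi_g\circ\varphi_f^{-1}$; and that the model spaces $\Gamma(M,f^*TN)$ and $\Gamma(M,g^*TN)$ are genuinely \Frechet, so we really do get a \Frechet-manifold atlas.

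For the Hausdorff property I would note that $C^\infty_{\text{co}}(M,N)$ is Hausdorff already as a topological space — it embeds via $\mathcal{T}$ (Lemma~\ref{top: embedding}) into a product of spaces $C_{\text{co}}(T^kM,T^kN)$ which are Hausdorff since $T^kN$ is (being metrisable by \cite{EG54}) — so the manifold topology, which coincides with the compact-open $C^\infty$-topology because each $\varphi_f$ is a homeomorphism onto an open subset, is Hausdorff. Finally, independence of the manifold structure from the choice of local addition: given two local additions $\Sigma_N,\Sigma_N'$ with associated atlases, I would show the identity map is a diffeomorphism between the two resulting manifolds by checking that a chart change $\varphi'_g\circ\varphi_f^{-1}$ mixing the two additions is again a smooth push-forward by a smooth fibrewise map $TN\supseteq\mathcal{O}\to TN$ — the argument is verbatim the one used for same-addition chart changes — hence the two atlases are compatible. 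I expect the main obstacle to be the smoothness of the chart changes in the rough-boundary setting: one must be scrupulous that every place where \cite{michor} or \cite{Wockel13} implicitly uses the Mean Value Theorem, the fundamental theorem of calculus, or openness of chart images on $M$ is replaced by the versions valid on locally convex regular subsets of $\mathbb{R}^d$ (so that Lemma~\ref{lem: reschain} applies), and that compactness of $M$ is genuinely needed to keep the relevant section spaces \Frechet and to make the "finitely many charts" bookkeeping work.
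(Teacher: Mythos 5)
Your overall architecture matches the paper's: open canonical charts covering $C^\infty_{\text{co}}(M,N)$, model spaces $\Gamma(M,f^*TN)$ shown to be \Frechet via \ref{setup: sect:loc} and Corollary~\ref{cor: complete}, localisation of the transition map in finitely many bundle trivialisations over chart domains of $M$, Hausdorffness from the underlying compact-open topology, and independence of the local addition by running the same transition-map argument for a mixed pair of additions. The divergence — and the gap — is at the one step that carries all the analytic content: smoothness of the chart changes. You reduce this to the assertion that push-forward by a smooth fibrewise map, viewed locally as a map $C^\infty_{\text{co}}(K,\R^n)\supseteq V\to C^\infty_{\text{co}}(K,\R^p)$ with $K$ a locally convex regular subset, is Bastiani-smooth, and you say ``the standard argument shows'' this via the chain rule and the fundamental theorem of calculus. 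But that assertion \emph{is} the local $\Omega$-Lemma, and the paper explicitly flags (in the remark following Theorem~\ref{thm: mm:rb}) that an $\Omega$-Lemma for manifolds with rough boundary is not available in the literature — it is attributed to forthcoming work of Gl\"ockner — which is precisely why the paper's proof is structured to avoid it. Pointwise the chain rule tells you what $d(f_*)(\gamma,\eta)$ must be; it does not give you continuity of $f_*$ and of its candidate differentials as maps between the function spaces, which is where the real estimates (or a substitute for them) live.

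The paper's substitute is the exponential law for manifolds with rough boundary (\cite[Theorem B]{alas2012}): after localising, $h_i\colon O_{f,g}\to C^\infty(W_i,F)$ is smooth if and only if $h_i^\vee\colon O_{f,g}\times W_i\to F$ is smooth, and the latter is exhibited as a finite composition of manifestly smooth maps (the evaluation map $\Gamma(W_i,g^*TN|_{W_i})\times W_i\to g^*TN|_{W_i}$, which is smooth by \cite[Proposition 3.20]{alas2012}, followed by $\Sigma_N$ and $(\pi_N,\Sigma_N)^{-1}$). This is where compactness of $M$ enters essentially: it keeps the section spaces \Frechet and places everything inside the scope of the exponential law. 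To repair your argument you should either invoke this exponential-law reduction in place of the claimed local smoothness of push-forward, or else actually prove the local $\Omega$-Lemma on locally convex regular subsets (continuity of $f_*$ and of the maps $(\gamma,\eta)\mapsto df(\gamma(\cdot),\eta(\cdot))$ in the compact-open $C^\infty$-topology, plus convergence of the Taylor remainders), which is a genuine piece of work rather than an application of Lemma~\ref{lem: reschain}.
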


\begin{proof}
The compact open $C^\infty$-topology on $C^\infty (M,N)$ is clearly finer than the compact open topology, whence $C^\infty_{\text{co}} (M,N)$ is Hausdorff.
In \ref{setup: can:charts} we have already seen that the canonical charts form an atlas of homeomorphisms. Hence we only have to prove that the change of charts are smooth. 

\paragraph{Change of charts formula.} To this end, observe that $h \coloneq \varphi_f \circ \varphi_g^{-1}$ has an open domain, $O_{f,g} \subseteq \Gamma (M,g^*TN)$ for each pair $f,g \in C^\infty (M,N)$.
Let now $\tau$ be in $O_{f,g}$ and $x\in M$, then we obtain the formula
\begin{equation}\label{eq: chch:form}
h \colon O_{f,g} \rightarrow \Gamma (M,f^*TN),\quad h(\tau)(x) = (\pi_N, \Sigma_N)^{-1} (f(x) , \Sigma_N \circ \tau (x))
\end{equation}
\paragraph{Localisation in charts}
Choose an atlas $(W_i,\kappa_i^f)_{1\leq i\leq n}$ of local trivialisations for the bundle $f^*TN$. Adjusting our choices if necessary, we may assume that for each $1\leq i\leq n$ $W_i$ is \begin{enumerate}
\item a domain of a bundle trivialisations $(W_i,\kappa_i^g)$ of $g^*TN$ 
\item the domain of a manifold chart $(W_i,\psi_i)$ of $M$.
\end{enumerate}
Now as in \ref{setup: sect:loc} we construct the topological embedding $\rho \colon \Gamma (M,f^*TN) \rightarrow \prod_{1\leq i\leq n} C^\infty_{\text{co}} (W_i,F)$, where $F$ is the model space of $N$.
Since precomposition with a smooth is continuous in the compact open $C^\infty$-topology by Proposition \ref{prop: fcomp:cont}, we see that $C^\infty_{\text{co}} (W_i,F) \cong C^\infty_{\text{co}} (\psi_i (W_i),F)$ as locally convex spaces (similarly for $g^*TN$). 
We remark for later use that since $M$ is compact, $W_i$ and also $\psi_i (W_i)$ are locally compact. 

\paragraph{Smoothness via the exponential law.} 
Now $\rho$ is a topological embedding with closed image, whence $h$ will be smooth if and only if $\rho \circ h$ is smooth and this is the case if and only if for each $1\leq i \leq n$ the mapping 
\begin{align*}
h_i \colon O_{f,g} & \rightarrow C^\infty (W_i,F)\\ 
\tau & \mapsto \pr_2 \circ \kappa_i \circ h(\tau)|_{W_i}
\end{align*}
is smooth.
Now we recall that $O_{f,g} \subseteq \Gamma (M,g^*TN)$ is open and the spaces $\Gamma (M,g^*TN)$ and $C^\infty (W_i,F)$ are \Frechet spaces by \ref{setup: sect:loc}. Since $W_i$ is a manifold with rough boundary (being an open subset of $M$) the exponential law \cite[Theorem B]{alas2012} for smooth mappings on manifolds with rough boundary yields: The $h_i$ (and thus $h$) are smooth if and only if the mapping 
$$h_i^\vee \colon O_{f,g} \times W_i \rightarrow F,\quad h_i^\vee (\tau,x) \coloneq h_i (\tau)(x)$$
is smooth. However, \eqref{eq: chch:form} allows us to write 
\begin{equation}\label{eq: chch:exp}
h_i^\vee (\tau) (x) = \pr_2 \circ (\pi_N, \Sigma_N)^{-1} \circ (f, \Sigma_N \circ (\ev(\tau|_{W_i})) (x),
\end{equation}
where $\ev \colon \Gamma (W_i, g^*TN|_{W_i}) \times W_i \rightarrow g^*TN|_{W_i},\ (f,x)\mapsto f(x)$ is the evaluation map. 
Using that $g^*TN|_{W_i}$ is trivial we identify $\Gamma (W_i, g^*TN|_{W_i}) \cong C^\infty_{\text{co}} (\psi_i (W_i), F)$ and deduce from \cite[Proposition 3.20]{alas2012} that $\ev$ is smooth. As the restriction of $\tau$ is smooth by \ref{setup: sect:loc}, we deduce from \eqref{eq: chch:exp} that $h_i^\vee$ is smooth as a composition of smooth functions. 
Summing up the change of charts are smooth and the canonical charts form indeed a smooth atlas turning $C^\infty (M,N)$ into a \Frechet manifold

\paragraph{The construction is independent of the choice of local addition.} Replacing the local addition $\Sigma_N$ by $\tilde{\Sigma}_N$, the change of charts formula \eqref{eq: chch:form} shows that the change of charts between a chart with respect to $\Sigma_N$ and with respect to $\tilde{\Sigma}_N$ will be smooth. 
Hence the manifold structure does not depend on the choice of local addition.
\end{proof}

\begin{remark}
 A crucial ingredient in the proof of Theorem \ref{thm: mm:rb} was compactness of $M$ to endow the function space $C^\infty (M,N)$ with the compact open $C^\infty$ topology and to use the exponential law instead of the so called $\Omega$-Lemma \cite[8.7]{michor}. 
 Though the authors believe that for non-compact $M$, the space $C^\infty (M,N)$ can be endowed with a manifold structure along the lines described in \cite{michor}, this would involve two significant steps: One has to define a version of the fine very strong topology for mapping spaces on non-compact manifolds of mappings and reprove the results outlined in \cite{HS17} (mostly trivial with the notable exception of continuity of the composition). 
 Then one needs an analogue of the $\Omega$-Lemma for manifolds with rough boundary (which will be contained in \cite{GloecknerNeebBuch}, due to H.\ Gl\"{o}ckner, private communications). 
\end{remark}

\section{Submanifolds with rough boundary and the proof of Theorem B}\label{sect: MFDMAP}

In this section we establish the global version of the splitting of spaces of compactly supported sections. 
Our aim is to construct submersions between the infinite-dimensional manifolds of mappings. 
To this end we need to clarify first our concept of a submanifold with boundary sitting inside of manifolds without boundary. If we require the submanifold with rough boundary to be closed, then it will automatically have no narrow fjords. The authors believe that they will also automatically satisfy the cusp condition but were not able to prove the latter statement.

\begin{definition}[Submanifold with rough boundary]
Let $M$ be a finite-dimensional manifold (possibly with rough boundary). A subset $S \subseteq M$ is called \emph{(embedded) submanifold with rough boundary} of $M$ if for every $p \in S$ there is a chart $(U_p,\varphi_p)$ of $M$ with $p\in U_p$ and $\varphi_p (p)=0$ and a regular locally convex subset $R_p \subseteq \varphi_p(U_p)$ such that $\varphi_p (S \cap U_p) = R_p$. 
If for every $p \in S$ the regular locally convex set is a relatively open set in a quadrant $[0,\infty[^m \times \R^{d-m}$ (cf.\ Definition \ref{defn: RBM}), then we say that $S$ is an (embedded) \emph{submanifold with corners}.
If in addition $S$ is a closed subset which satisfies the above conditions, we say that $S$ is a \emph{closed submanifold with rough boundary} (or \emph{with corners}, respectively).  
\end{definition}

\begin{remark}
\begin{enumerate}
\item A submanifold with rough boundary inherits the structure of a manifold with rough boundary from the ambient manifold and this structure turns the inclusion $\iota_S \colon S \rightarrow M$ into a smooth embedding.
Thus submanifolds with rough boundary as defined here are initial submanifolds, i.e.\ a mapping $f \colon N \rightarrow S \subseteq M$ between manifolds with rough boundary is smooth as a map to $S$ if and only if it is smooth as a map to $M$. 
Note that closed submanifolds of $\sigma$-compact manifolds are again $\sigma$-compact.
\item We remark that our definition of an embedded submanifold with corners is a special case of a submanifold with corners as in \cite[2.5]{michor}. 
Since we are only interested in a very specialised case, we do not need the more general definition. 
In particular, we refrain from defining submanifolds of lower dimension (which could be done as usual but is not needed here).
\item Due to our definition an embedded submanifold with rough boundary $S\subseteq M$ is regular: Consider $x\in \partial S$ and let $(U_\varphi,\varphi)$ be a submanifold chart. 
Then $\varphi(x) \in \varphi (U_\varphi \cap S) = \varphi(U_\varphi) \cap C$ for a regular locally convex set $C$. 
Hence $W \coloneq \varphi^{-1}(\varphi (U_\varphi \cap C^\circ)) \subseteq S$ is an open set in $M$, whence contained in the interior of $S$. 
Choosing a sequence in W we can approximate $\varphi (x)$, whence $x \in \overline{S^\circ}$. 
Thus $S$ is regular and we see in addition that $x \in \partial S$ entails $\varphi (x) \in \partial \varphi(U_\varphi)$.  
\end{enumerate}  \label{rem: submfd:rb}
\end{remark}

Before we continue, let us construct a class of examples for submanifolds with rough boundary of a Riemannian manifold which will be used to prove Corollary D from the introduction.

\begin{lemma}\label{lem: sconv:rbsmfd}
Let $M$ be a Riemannian manifold and $C$ be a regular closed subset which is strongly convex, i.e.\ for every $p,q \in C$ exists a unique minimal geodesic segment $\overline{pq}$ connecting $p$ and $q$ such that $\overline{pq} \subseteq C$.
Then $C$ is a submanifold of $M$ with rough boundary.
\end{lemma}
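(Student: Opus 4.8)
The plan is to produce, for each point of $C$, a suitable chart of $M$ in which $C$ looks like a regular locally convex set, using normal coordinates centered at an interior point together with the strong convexity hypothesis. First I would handle the easy case: if $p \in C^\circ$, then an ordinary chart of $M$ around $p$ whose image is contained in $C$ exhibits $C$ locally as the whole model space, which is trivially regular and locally convex. So the real work is at a boundary point $p \in \partial C$.

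For $p \in \partial C$, fix a point $o \in C^\circ$ (which exists since $C$ is regular) and consider the Riemannian exponential chart. The key observation is that strong convexity is a statement precisely adapted to geodesics: by hypothesis, for any $q \in C$ the unique minimal geodesic $\overline{oq}$ lies entirely in $C$. Working in geodesic normal coordinates $\varphi = \exp_o^{-1}$ on a strongly convex geodesic ball $B$ around $o$ that contains $p$ (such balls exist by Whitehead's theorem; one may also need to shrink so that $\exp_o$ is a diffeomorphism onto its image and $B$ is itself strongly convex in $M$), the geodesic $\overline{oq}$ is sent to the straight Euclidean segment from $0$ to $\varphi(q)$. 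Hence $\varphi(C \cap B)$ is a \emph{star-shaped} set with respect to $0 = \varphi(o)$: every point can be joined to the origin by a Euclidean segment staying in the set. A star-shaped set with the origin in its interior (which holds here, since $o \in C^\circ$, so a whole ball around $0$ lies in $\varphi(C\cap B)$) is a regular locally convex set in the sense needed: it has dense interior because star-shaped sets with interior point are regular (any boundary point is a limit of scaled-down copies of itself which are interior), and it is locally convex at each of its points — at interior points trivially, and at boundary points because a small neighborhood intersected with the cone spanned from the interior ball is convex. Actually the cleanest route is: a star-shaped open-interior set need not be globally convex, but we only need \emph{local} convexity in the definition of a regular locally convex subset, and for the submanifold-chart definition we need $\varphi(S\cap U_p) = \varphi(U_p)\cap R_p$ with $R_p$ regular locally convex — so I would take $R_p$ to be the star-shaped set itself (or its closure intersected appropriately) and verify the two properties pointwise.

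The main technical points to nail down are: (i) that one can choose, around a given boundary point $p$, a geodesically convex normal-coordinate ball $B \ni p$ with $o \in C^\circ \cap B$ such that $\exp_o$ restricts to a diffeomorphism and geodesics through $o$ map to line segments — this is standard Riemannian geometry (Whitehead, existence of totally normal neighborhoods), but one must make sure $B$ can be taken to contain both $o$ and $p$; since $o$ can be chosen arbitrarily close to $p$ in $C^\circ$, shrinking works; (ii) verifying that a subset of $\R^n$ which is star-shaped with respect to an interior point is regular and locally convex — the regularity is the segment-scaling argument above, and local convexity at a boundary point $x$ follows because near $x$ the set contains the convex hull of $x$ with a small ball around $o$, hence contains a convex neighborhood of $x$ within the set... wait, that is not quite right either, so the honest statement to prove is just that $R_p$ is regular (dense interior) and that it is locally convex, where for the latter one may need to additionally invoke that $C$ is itself convex-like along all geodesics, not just those through $o$: in fact the full strong convexity hypothesis gives that $\varphi(C\cap B)$ is geodesically convex in $B$, and in normal coordinates geodesic convexity implies a strong star-shapedness from every point, which is enough. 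I expect step (ii) — pinning down exactly which elementary convexity property of $\varphi(C\cap B)$ holds in normal coordinates and checking it matches the definition of "regular locally convex set" — to be the main obstacle, since geodesic convexity does not transfer to Euclidean convexity under $\exp$, only a weaker star-shaped-type property, and one must confirm that weaker property suffices for the definition of submanifold with rough boundary.
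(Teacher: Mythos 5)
Your interior-point case and the overall strategy (normal coordinates plus the convexity hypothesis) are on the right track, but the boundary case has a genuine gap, and it is exactly the one you flag at the end without closing. Centering the chart at an interior point $o$ only gives you that $\varphi(C\cap B)$ is star-shaped with respect to $0=\varphi(o)$, and star-shapedness with respect to one interior point does \emph{not} imply local convexity at boundary points: the set $\{(x,y)\in\R^2 : y\leq |x|^{1/2}\}$ intersected with a disk is star-shaped from $(0,-1/2)$ and has dense interior, yet fails to have any convex neighborhood of the origin inside itself. Your proposed repair --- that geodesic convexity of $C$ gives ``star-shapedness from every point'' of the chart image --- does not rescue this, because in $\exp_o$-coordinates only the geodesics \emph{through $o$} become Euclidean segments; the geodesic joining two nearby boundary points is merely $C^2$-close to the chord, and the chord itself may leave the set, so local Euclidean convexity in this chart genuinely need not hold.

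The paper closes this gap by choosing the chart differently: it takes normal coordinates $\exp_q$ centered at the boundary point $q$ itself and considers the radial cone $V_q=\bigcup_{\lambda>0}\lambda\cdot\exp_q^{-1}(C^\circ)\subseteq T_qM$. The nontrivial geometric input, imported from the reference on convex sets in Riemannian manifolds (\cite[4.9.2]{MR0226542}, applied to the weakly convex set $C^\circ$ with $q\in\partial C=\partial C^\circ$), is that $V_q$ is a \emph{convex cone} with tip $0_q$; hence $W_q\cap\overline{V}_q$ is genuinely locally convex with dense interior, and a short sequence argument identifies $\exp_q(W_q\cap\overline{V}_q)$ with $C\cap\exp_q(W_q)$. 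Some such tangent-cone convexity statement (or an equivalent, e.g.\ that strongly convex sets have Lipschitz boundary) is the missing ingredient your argument would need; without it the verification of local convexity does not go through.
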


\begin{proof}
By standard Riemannian geometry, we can choose for every $p\in C$ an open $0$-neighborhood $W_p \subseteq T_pM$ such that the restriction of the Riemannian exponential map $\exp_p \coloneq \exp|_{W_p} \colon  W_p \rightarrow M$ induces a diffeomorphism onto an (open) $p$-neighborhood in $M$. 
We will show that the manifold charts $\exp_p$ induce suitable submanifold charts with rough boundary.

Clearly if $p \in C^\circ$ we can just shrink $W_p$ to obtain such a submanifold chart. For $q \in \partial C$ (the boundary of $C$) we have to work harder. 
Define for $q \in \partial C$ the set
$$V_q \coloneq \{\lambda w \in T_q M \mid 0< \lambda < \infty, w\in W_q \text{ and } \exp_q (w) \in C^\circ\}.$$
We observe that $V_q = \bigcup_{0 < \lambda < \infty }\lambda \cdot \exp_q^{-1} (C^\circ)$ is an open subset of $T_q M$.
Now we exploit the geometric properties of strongly convex sets following \cite{MR0226542}, where these sets are called "konvex".\footnote{Loc.cit.\ assumes that $M$ is a complete Riemannian manifold. 
We do not assume completeness as the parts of \cite{MR0226542} needed here do not rely on the completeness of $M$. In fact, the geodesic segments needed in the proofs exist since we are working in the strongly convex set $C$.}
As $C$ is strongly convex, also the interior $C^\circ$ is strongly convex \cite[Korollar 4.5.1]{MR0226542}, whence it is also "schwach konvex" (weakly convex) in the terminology of loc.~cit.. 
Now regularity of $C$ implies that $q \in \partial C = \partial C^\circ$. 
We can thus copy the argument in the proof of \cite[4.9.2]{MR0226542} verbatim (note that the geodesics occuring there are only needed locally in a small neighborhood around $q$!) to establish that $V_q$ is a convex cone in $T_qM$ whose tip is $0_q$. 
In particular $\overline{V}_q$ is a convex cone, i.e.\ a closed subset with dense interior that is (locally) convex.

\paragraph{Claim:} $\exp_q (W_q \cap \overline{V}_q) = C \cap \exp_q (W_q)$.
If this is true then $\exp_q$ restricts to a submanifold chart (with rough boundary) for $C$ around $q$ as $W_q \cap \overline{V}_q$ has dense interior (namely $W_q \cap V_q$) and is locally convex as an intersection of two (locally) convex sets in $T_pM$.
We conclude that $C$ is a closed embedded submanifold with rough boundary of the Riemannian manifold $M$.

\paragraph{Proof of the claim:}
 Observe that $\exp_q (V_q \cap W_q) = \exp_q (W_q) \cap C^\circ$ by construction of $V_q$ (and the diffeomorphism property of $\exp_q$). 
Now let $x \in W_q \cap \overline{V}_q$. Since the interior of this subset is the dense set $W_q \cap V_q$, we can choose and fix a sequence $(x_n)_{n\in \N} \subseteq  V_q \cap W_q$ with $\lim_{n} x_n = x$. By continuity of $\exp_q$ and since $\exp_q (x_n) \in C^\circ \cap \exp_q (W_q)$ we have 
$\exp_q (x) \in \exp_q (W_q) \cap \overline{C^\circ} = \exp_q (W_q) \cap C$. Conversely, if $p_n \in C^\circ \cap \exp_q (W_q)$ is a sequence converging to $p \in C \cap \exp_q (W_q)$ we use continuity of $\exp_q^{-1}$ to see that $p \in \exp_q (W_q \cap \overline{V}_q)$.
Summing up, the claim follows.
\end{proof}

\begin{remark}\label{rem: boundary: strong:conv}
Actually, strongly convex subsets have very nice boundary behaviour. For example, it is known that they have Lipschitz boundary (cf.\ e.g.\ \cite{MR638797}). 
However, we are not aware of another source in the literature where submanifold charts of the above kind are explicitely constructed. 

In light of Example \ref{ex: cusp} this implies that a strongly convex regular closed subset satisfies the cusp condition.
\end{remark}

Encouraged by these results, we shall now prove that every closed submanifold with rough boundary satisfies the cusp condition, Definition~\ref{def:cusp_condition}. 

\begin{proposition}\label{prop: rsub:nnf}
Let $M$ be a Riemannian manifold and $C$ be a closed submanifold with rough boundary. Then $C$ satisfies the cusp condition.
\end{proposition}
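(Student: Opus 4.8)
The plan is to check the two parts of Definition~\ref{def:cusp_condition} separately, reducing each to a statement about a regular locally convex subset of $\R^n$. By Remark~\ref{rem: submfd:rb}.3 the set $C$ is regular, and around each $p\in C$ there is a submanifold chart $(U_p,\varphi_p)$ with $\varphi_p(C\cap U_p)=\varphi_p(U_p)\cap R_p$ for a regular locally convex set $R_p\subseteq\R^n$; shrinking $U_p$ we may take it relatively compact, and by Remark~\ref{rem: standard:arguments} we may take $\varphi_p$ to be bi-Lipschitz between the geodesic metric $d^g$ and the Euclidean metric, uniformly on $\overline{U_p}$. Since both halves of the cusp condition are essentially local (the no narrow fjords condition manifestly so, and the polynomial outward cusp condition only involving, for a given compact $K$, finitely many such charts whose closures cover $K\cap\partial C$), it suffices to verify the cusp condition for the Euclidean sets $\varphi_p(C\cap U_p)$ and then transfer back via Lemma~\ref{lem: trans:nnf} and Lemma~\ref{lem:transfer_cusp_condition}; as the metric comparison is uniform on the compact sets at issue, the completeness hypothesis of the latter lemma is not needed (cf.\ the remark following it).

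Thus let $R\subseteq\R^n$ be regular and locally convex. For the \emph{no narrow fjords} condition, fix $a\in R$, a convex neighbourhood $N_0$ of $a$ in which $R$ is convex, and an interior point $w_0\in R^\circ\cap N_0$; for $x,y$ in a small compact neighbourhood $K\subseteq N_0$ of $a$, let $m$ be the midpoint of $[x,y]$ and put $w\coloneq(1-t)m+tw_0$ with $t>0$ small, so that $w\in R^\circ$, the broken path $\gamma$ from $x$ to $y$ through $w$ meets $\partial R$ only at its two endpoints, and $\ell(\gamma)\leq\tfrac32\lVert x-y\rVert$. This gives $\lVert x-y\rVert\geq\tfrac23\,\ell(\gamma)$, i.e.\ the bounded turning condition, hence no narrow fjords (with exponent $1$). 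For the \emph{polynomial outward cusp} condition we record the corkscrew estimate: having shrunk $\delta_0$ so that $B(w_0,\delta_0)\subseteq R^\circ\cap N_0$ and $\delta_0\leq\tfrac12\lVert w_0-z\rVert$ for all boundary points $z$ in the relevant compact set (possible since $\lVert w_0-z\rVert$ is bounded below there), convexity of $R$ on $N_0$ shows that the convex hull of $\{z\}$ and $B(w_0,\delta_0)$ lies in $R$; for small $\varep>0$ set $x\coloneq(1-s)z+sw_0$ with $s\coloneq\varep/(2\lVert w_0-z\rVert)\leq 1$, so that $d(x,z)=\varep/2<\varep$ and $B(x,s\delta_0)\subseteq R^\circ$ with $s\delta_0\leq\varep/4$, whence $B(x,s\delta_0)\subseteq B(z,\varep)$. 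So Definition~\ref{def:cusp_condition}.2 holds with $r=1$ and $\rho$ comparable to $\delta_0/\lVert w_0-z\rVert$. (Alternatively, a regular locally convex set has locally Lipschitz boundary, so is locally a Lipschitz domain, and Example~\ref{ex: cusp} applies.)

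It remains to make the constants uniform over a given compact $K\subseteq M$. Cover $K\cap\partial C$ by finitely many of the relatively compact submanifold charts above; inside each, cover the relevant compact piece of $\partial R$ by finitely many convex neighbourhoods $N_0$, pass to a common Lebesgue number, and take the minimum of the resulting $\delta_0$ and $\varep_0$ and a uniform upper bound $L$ on the distances $\lVert w_0-z\rVert$; transferring back to $(M,d^g)$ as in the first paragraph (and observing that the ball produced in a chart lies inside $C\cap U_p\subseteq C$) yields constants valid for all $z\in K\cap\partial C$. I expect the main obstacle to be precisely this last bookkeeping step — keeping the geometric constants uniform while $z$ ranges over a compact set of boundary points that may lie in several charts, and carrying out the metric transfer without assuming $(M,d^g)$ complete; the underlying geometric estimates are elementary consequences of local convexity, and the regularity of $C$ ensures there is nothing extra to check at interior points.
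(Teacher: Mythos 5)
Your proposal is correct and follows essentially the same route as the paper's proof: localize in relatively compact, bi-Lipschitz submanifold charts, reduce to a compact convex neighbourhood inside the regular locally convex model set $R\subseteq\R^n$, verify both halves of the cusp condition there, and transfer back using the two metric-comparison lemmas. The only difference is one of explicitness: where the paper appeals to the fact that convex sets have Lipschitz boundary and calls the remaining verification ``easy but tedious,'' you actually carry out the broken-path (bounded turning) and corkscrew estimates from convexity, together with the bookkeeping that makes the constants uniform over a compact set of boundary points.
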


\begin{proof}
Following Remark \ref{rem: submfd:rb} we already know that $C$ is a regular closed set. We first have to check the no narrow fjord condition. To this end, fix $x\in C$ together with a manifold chart $\varphi \colon U \rightarrow \varphi (U) \subseteq \R^n$ around $x$ and $\varphi (U\cap C) = \varphi(U)\cap R$ for a suitable regular locally convex set $R$.
Shrinking the chart if necessary, we can assume that it is bi-Lipschitz with respect to the geodesic length metric and the Euclidean metric (cf.\ Remark~\ref{rem: standard:arguments}). Then arguing as in Lemma~\ref{lem: trans:nnf}, it suffices to check the no narrow fjords condition for $\varphi (x)$ as an element of the locally convex subset $\varphi(U) \cap R \subseteq \R^n$ (note that this set need not be closed whence it does not make sense to say that it has no narrow fjords!)

Now since $U \cap C$ is the intersection of an open and a closed subset of a locally compact space, it is locally compact, whence $\varphi(U\cap C) = \varphi (U) \cap R$ is locally compact. 
We can thus choose a compact neighborhood $K$ of $\varphi(x)$ in $\varphi (U) \cap R$.
Then local convexity of $\varphi (U) \cap R$ allows us to choose a neighborhood $\varphi(x) \in W \subseteq K$ which is convex. 
Denote now by $K_{\varphi(x)} = \overline{W} \subseteq K$ the closure of $W$. 
It is again a convex set and compact by construction. 
Now an easy but tedious computation involving metric estimates, convexity of $K_{\varphi(x)}$ and the boundary behavior observation $\partial (\varphi (U) \cap R) \cap K_{\varphi(x)} \subseteq \partial K_{\varphi(x)}$ yields the constants needed to verify the no narrow fjords condition for $K_{\varphi (x)}$. 
Alternatively, observe that convex sets have Lipschitz boundary (cf.\ again \cite{MR638797}) which implies that $K_{\varphi (x)}$ has no narrow fjords.

To check the outward polynomial cusp condition, we use again the compact, convex neighborhood $K_{\varphi(x)}$ of $\varphi(x)$ in $\varphi(C\cap U)$, and a diffeomorphism $\varphi(U) \simeq \R^n$ (which is locally bi-Lipschitz). Since the image of the compact convex neighborhood with Lipschitz boundary yields the required estimates for the polynomial outward cusp condition, the diffeomorphism transfers them from $\varphi(U)$ to $\R^n$ (and so satisfying Frerick's version of the definition). Invoking Lemma \ref{lem:transfer_cusp_condition}, we see that $ C\cap U $ satisfies the condition.
Hence $C$ has at worst polynomial outward cusps, completing the proof.
\end{proof}

For regular closed subsets which are at the same time submanifolds with rough boundary we prove now that for every vector bundle the space of sections from Definition~\ref{defn: sectsp:cl} can canonically be identified with the sections of the corresponding pullback bundle over the submanifold with rough boundary. 

\begin{proposition}\label{prop: quot:secsmfd}
Let $\pi_E \colon E \rightarrow M$ be a rank $k$ vector bundle over a manifold without boundary.
 Let $C \subseteq M$ be a closed submanifold with rough boundary. Assume in addition that 
 \begin{enumerate}
\item $C$ is a submanifold with corners, or
\item $C$ is compact.
\end{enumerate}
Then the pullback $p_C \colon \Gamma_c (M,E) \rightarrow \Gamma_c (C, \iota_C^* E),\ \sigma \mapsto \sigma \circ \iota_C$ by the inclusion $\iota_C \colon C \rightarrow M$ is a linear quotient map.
\end{proposition}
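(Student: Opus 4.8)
The plan is to reduce the statement to the already-established Theorem~A and the bundle-section gluing apparatus of Section~\ref{sect: proof of thm A}. First I would observe that, by Proposition~\ref{prop: rsub:nnf}, the closed submanifold with rough boundary $C$ satisfies the cusp condition, so Theorem~\ref{thmA} applies and gives a continuous linear section $\mathcal{E}_C^M$ of $\res_C\colon\Gamma_c(M,E)\to\Gamma_c(C,E)$, where $\Gamma_c(C,E)$ is the space from Definition~\ref{defn: sectsp:cl}. The remaining task is to identify $\Gamma_c(C,E)$ with $\Gamma_c(C,\iota_C^*E)$ in such a way that $\res_C$ becomes $p_C$, i.e.\ to check that the natural map $\Gamma_c(C,E)\to\Gamma_c(C,\iota_C^*E)$, $\sigma\mapsto \sigma$ viewed as a section of the pullback, is a topological isomorphism; then $p_C = (\text{that iso})\circ\res_C$ is a linear quotient map because $\res_C$ is (surjective, open, continuous by Theorem~\ref{thmA} together with Proposition~\ref{prop: Whit:mfd} and Lemma~\ref{lem: op:ext}).

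Concretely, I would proceed as follows. A section of $\iota_C^*E$ over $C$ is, by definition of the pullback bundle, the same set-theoretic data as a map $\sigma\colon C\to E$ with $\pi_E\circ\sigma=\iota_C$; the content is entirely about the smooth structures and the topologies on the two function spaces. For the smooth structure: $\Gamma_c(C,E)$ uses $C^\infty(C,E)$ in the sense of testing in charts of the ambient manifolds $M$ and (the total space) $E$, which are manifolds without boundary; $\Gamma_c(C,\iota_C^*E)$ uses the intrinsic manifold-with-rough-boundary structure on $C$ and $\iota_C^*E$. Because $C$ is an embedded submanifold with rough boundary, Remark~\ref{rem: submfd:rb}(1) tells us $C$ is an \emph{initial} submanifold of $M$, and the same holds for $\iota_C^*E$ inside $E$ (its charts are obtained by intersecting bundle charts of $E$ with submanifold charts of $C$, using the chain rule Lemma~\ref{lem: reschain}.2 in the locally convex-set case, valid since the model sets are regular locally convex); hence a map into $\iota_C^*E$ is smooth in the intrinsic sense iff it is smooth as a map into $E$, which is exactly the ambient-chart notion. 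So the two notions of smooth section coincide as sets. For the topology: both carry (locally) the compact-open $C^\infty$-topology — on $\Gamma_c(C,E)$ this is the identification/subspace topology coming via $D$ and Proposition~\ref{prop: Whit:mfd}/Lemma~\ref{lemma: r_is_injective}, and on $\Gamma_c(C,\iota_C^*E)$ it is the subspace topology from $C^\infty_{\text{co}}(C,\iota_C^*E)$ as in \ref{setup: sect:loc} (using Theorem~\ref{thm: mm:rb} / Corollary~\ref{cor: complete} in the compact case, and the local description in the corners case). One checks that restricting to a submanifold chart $(U_p,\varphi_p)$ of $M$ and a bundle trivialisation of $E$ over $U_p$, both topologies are the initial topology with respect to the local maps $C\cap U_p\to \R^k$ together with all iterated derivatives, i.e.\ the identification topology of Remark~\ref{rem: coinfty}; so the identity map between the two spaces is a homeomorphism locally, hence globally after patching with the partition of unity $(\chi_i)$ exactly as in Lemma~\ref{lemma: r_is_injective} and Lemma~\ref{lem: sm:mix}.

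Having this identification, I would finish by writing $p_C$ as the composite $\Gamma_c(M,E)\xrightarrow{\res_C}\Gamma_c(C,E)\xrightarrow{\ \cong\ }\Gamma_c(C,\iota_C^*E)$ and noting each factor is a linear quotient map: $\res_C$ is surjective, continuous, linear, and open onto the (Hausdorff, locally convex) space $\Gamma_c(C,E)$ — it is continuous and linear by construction, surjective by Proposition~\ref{prop: Whit:mfd}, and it is open because by Theorem~\ref{thmA} it admits the continuous linear section $\mathcal{E}_C^M$, so it is a (topological) quotient map in the strong sense; the second factor is an isomorphism of locally convex spaces by the previous paragraph. The composite of a quotient map and an isomorphism is a quotient map, which is the claim.

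The main obstacle, and where I would spend most of the care, is the identification step in the middle paragraph — specifically verifying that the \emph{topology} on $\Gamma_c(C,E)$ (defined via the Whitney-jet identification and the global gluing of Proposition~\ref{prop: Whit:mfd}) agrees with the subspace topology from $C^\infty_{\text{co}}(C,\iota_C^*E)$. The set-level smoothness matching is routine from initiality (Remark~\ref{rem: submfd:rb}), and the partition-of-unity patching is exactly the machinery already developed; but one must be attentive that in the non-compact case the two global topologies (quotient vs.\ subspace, cf.\ the remark after Lemma~\ref{lemma: r_is_injective}) are reconciled precisely by the existence of the continuous section from Theorem~\ref{thmA} and Lemma~\ref{lem: op:ext}, and that the corners hypothesis versus compactness hypothesis is what guarantees the local convexity needed for Lemma~\ref{lem: reschain}.2 so that the intrinsic manifold-with-corners/rough-boundary structure on $C$ (hence on $\iota_C^*E$) is available and behaves well. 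With those caveats handled, the proof is essentially a bookkeeping assembly of earlier results.
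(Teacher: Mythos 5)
Your plan inverts the paper's logical order, and this creates a genuine gap at the step you yourself flag as the crux. You propose to first establish the topological isomorphism $\Gamma_c(C,E)\cong\Gamma_c(C,\iota_C^*E)$ and then write $p_C$ as that isomorphism composed with $\res_C$. But in the paper that identification is Proposition~\ref{prop: ident:sectsp}, and it is \emph{deduced from} Proposition~\ref{prop: quot:secsmfd} (both $\res_C$ and $p_C$ are quotient maps with the same kernel and the same underlying set-level target, hence the two topologies agree). Your sketch of an independent proof of the identification --- ``both topologies are locally the compact-open $C^\infty$-topology in charts, hence the identity is a homeomorphism globally after patching'' --- is not closed. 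In the compact case it is salvageable via \ref{setup: sect:loc} and finitely many trivialisations. In the non-compact corners case, however, you would need the analogue of Lemma~\ref{lem: top:sect} for the manifold with corners $C$ itself, i.e.\ that the box-direct-sum/patching topology on the space $\mathcal{A}$ agrees with the fine very strong (LF)-topology on $\Gamma_c(C,\iota_C^*E)$; the paper only has this (via \cite[F.19]{math/0408008v1}) for manifolds \emph{without} boundary, and neither the paper nor your argument supplies the corners version. So the ``hence globally'' is exactly the missing content, not bookkeeping.

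The paper's proof avoids this entirely and is worth contrasting. Surjectivity is obtained as you do (Proposition~\ref{prop: rsub:nnf} gives no narrow fjords, then Proposition~\ref{prop: Whit:mfd}). But continuity of $p_C$ is proved \emph{directly}: in the corners case from properness of $\iota_C$ and the continuity (indeed smoothness) of $\iota_C^*$ on $C^\infty_{\text{fS}}$-spaces \cite[Theorem 7.3]{michor}, composed with the canonical isomorphism $\mathcal{D}_{\iota_C}(C,E)\cong\Gamma_c(C,\iota_C^*E)$; in the compact case by localising to the finitely many bundle trivialisations meeting $C$ and invoking Proposition~\ref{prop: fcomp:cont}. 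Openness then comes for free from the open mapping theorem \cite[24.30]{MR1483073}, since $\Gamma_c(M,E)$ is an (LF)-space (webbed and ultrabornological) and the target is an (LF)-space or a \Frechet space in the two respective cases. Note in particular that the paper's proof of this proposition does not use Theorem~A's continuous section at all for the quotient property --- only for surjectivity via Proposition~\ref{prop: Whit:mfd}. If you want to keep your route, you must either restrict to the compact case or first prove the corners analogue of Lemma~\ref{lem: top:sect}; otherwise the cleaner fix is to prove continuity of $p_C$ directly as above and get openness from the open mapping theorem.
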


\begin{proof}
 Observe that since $C \subseteq M$ is closed, the inclusion $\iota_C$ is a proper mapping (i.e.\ preimages of compact sets are compact).
 Thus by our definition of submanifold with rough boundary (or with corners), every section  $\sigma \in \Gamma_c (M,E)$ induces a smooth pullback section $\sigma \circ \iota_C \in \Gamma_c (C,\iota_C^* E)$ and the pullback map $p_C$ makes sense and is clearly linear. 
Further, taking canonical identifications, a section $\tau \in \Gamma_c (C,\iota_C^* E)$ clearly coincides with a smooth mapping $\tilde{\tau}\colon C \rightarrow E$ with compact support such that $\pi_E \circ \tilde{\tau} = C$ (here we use $\tilde{\tau}$ to mark the difference in the codomain). 
Now Proposition \ref{prop: rsub:nnf} implies that $C$ has no narrow fjords, whence by Proposition \ref{prop: Whit:mfd} there is $\hat{\tau} \in \Gamma_c (M,E)$ which restricts to $\tilde{\tau}$ on $C$. We deduce that $p_C (\hat{\tau}) = \tau$, whence $p_C$ is surjective.

To establish continuity we have to distinguish the two cases, due to the difference in the function space topologies.
\begin{enumerate}
\item ($C$ is a submanifold with corners) The map $\iota_C^* \colon C^\infty_{\text{fS}} (M,E) \rightarrow C^\infty_{\text{fS}} (C,E), f \mapsto f\circ \iota_C$ is continuous by \cite[Theorem 7.3]{michor} (and even smooth) as $\iota_C$ is proper. 
Consider the linear subspace 
$\mathcal{D}_{\iota_C} (C,E) \coloneq \{g \in C^\infty (C,E) \mid \pi_E \circ g = \iota_C, g \equiv 0 \text{ off some compact set in } C\} $ of $C^\infty_{fS} (C,E)$.
It is easy to see that $\iota_C^*$ restricts to a continuous mapping 
$I \colon \Gamma_c (M,E) \rightarrow \mathcal{D}_{\iota_C} (C,E).$
However, due to the definition of the pullback bundle (see \cite[1.18 and 1.19]{michor}), the space $\mathcal{D}_{\iota_C} (C,E)$ is isomorphic as a linear and topological space to $\Gamma_c (C,\iota_C^* (E))$, composing $I$ with this isomorphism we obtain $p_C$ which is thus continuous.
\item ($C$ is compact) Since $M$ is $\sigma$-compact, we can choose and fix a locally finite (countable) atlas of bundle trivialisations $(W_i,\kappa_i)_{i\in \N}$ for $E$ such that every $W_i$ is relatively compact. Since $C$ is compact, only finitely many $W_i$ intersect $C$. After reordering, we may assume that $W_i \cap C \neq \emptyset$ iff $1\leq i \leq n$ for some $n\in \N$. 
Observe that since $W_i$ is an open subset of $M$ and $C$ is an embedded submanifold (with rough boundary), for $i\leq n$, the set $\iota_C^{-1} (W_i) = W_i \cap C$ is a submanifold with rough boundary of $W_i$---as submanifold charts $\varphi \colon U \rightarrow \varphi (U)$ are bijections, we find $\varphi(W_i \cap U \cap C) = \varphi (W_i \cap U) \cap R$. 
Hence Proposition \ref{prop: fcomp:cont} implies that 
\begin{align*}
p_i \colon C^\infty_{\text{co}} (W_i,\R^k) & \rightarrow C^\infty_{\text{co}} (W_i\cap C, \R^k)\\
f & \mapsto f|_{C \cap W_i}
\end{align*} is continuous linear for $i \leq n$. 
Thus we obtain a continuous (linear) map 
\begin{align*}
q \colon \bigoplus_{i \in \N} C^\infty_{\text{co}} (W_i, \R^d) &\rightarrow \bigoplus_{i \leq n}  C^\infty_{\text{co}} (W_i \cap C, \R^d)\\
(f_i)_i & \mapsto (p_i(f_i))_{1\leq i \leq n}.
\end{align*}
Following Lemma \ref{lem: top:sect} and \ref{setup: sect:loc} we obtain a commutative diagram 
\[
\xymatrix@C2em{
    \Gamma_c (M,E) \ar[r]^-\rho \ar[d]^{p_C} & \displaystyle\bigoplus_{i\in \N} \Gamma (W_i, E|_{W_i})  \ar[r]^\cong & \displaystyle \bigoplus_{i\in \N}  C^\infty_{\text{co}} (W_i,\R^d) \ar[d]^{q} \\
     \Gamma (C,E) \ar[r]^-\rho & \displaystyle\bigoplus_{1\leq i\leq n} \Gamma (W_i \cap C, (\iota_C^* E)|_{W_i \cap C}) \ar[r]^-\cong & \displaystyle \bigoplus_{1\leq i\leq n} C^\infty (W_i\cap C, \R^d)
    }
\]
where the the image the $\rho$ are topological embeddings with closed images. Hence $p_C$ is continuous in this case.
\end{enumerate}
Finally, let us establish that $p_C$ is a quotient map, i.e.\ $p_C$ is open.
To this end, recall from Appendix \ref{App: topo:funcsp} that $\Gamma_c (M,E)$ is an (LF)-space, i.e. webbed and ultrabornological. 
Further, if $C$ is a manifold with corners, also $\Gamma_c (C,\iota_C^* E)$ is an (LF) space. 
If $C$ is compact and a manifold with rough boundary, then $\Gamma (C,\iota_C^*E)$ is even a \Frechet space by \ref{setup: sect:loc}. 
In both cases, $p_C$ is open by the open mapping theorem \cite[24.30]{MR1483073}. 
\end{proof}

\begin{proposition}\label{prop: ident:sectsp}
Let $M$ be a manifold and $E \rightarrow M$ be a rank $k$ vector bundle.
If $C$ is closed submanifold with rough boundary which satisfies the assumptions of Proposition \ref{prop: quot:secsmfd}, then $\Gamma_c (C,E) = \Gamma_c (C,\iota_C^*E)$ as locally convex vector spaces.
\end{proposition}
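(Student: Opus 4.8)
The plan is to exhibit a canonical linear bijection $\iota\colon\Gamma_c(C,\iota_C^*E)\to\Gamma_c(C,E)$ and to show it is a homeomorphism by observing that on both sides the topology is the quotient topology induced from $\Gamma_c(M,E)$ by one and the same map.

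First I would set up $\iota$. Post-composition with the canonical bundle map $F\colon\iota_C^*E\to E$ covering $\iota_C$ sends a section $s$ of $\iota_C^*E$ to the map $F\circ s\colon C\to E$ with $\pi_E\circ(F\circ s)=\iota_C$; conversely any map $g\colon C\to E$ with $\pi_E\circ g=\iota_C$ lifts uniquely to the section $x\mapsto(x,g(x))$. This is a linear bijection from $\Gamma_c(C,\iota_C^*E)$ onto the space $\mathcal D_{\iota_C}(C,E)$ of compactly supported smooth maps $C\to E$ over $\iota_C$ used in the proof of Proposition~\ref{prop: quot:secsmfd}, and that proof shows it is even a topological isomorphism. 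What remains for the set-level identification is that $\mathcal D_{\iota_C}(C,E)$ coincides with $\Gamma_c(C,E)$, i.e.\ that such a $g$ is smooth in the sense of a map between the manifolds with rough boundary $C$ and $E$ exactly when it is smooth in the sense defining $\Gamma_c(C,E)$, namely as a map on the regular closed subset $C\subseteq M$ tested in charts of $M$ and of $E$. Since $C$ has full dimension in $M$, every submanifold-with-rough-boundary chart of $C$ is the restriction of a chart $(\varphi,U)$ of $M$ with $\varphi(C\cap U)=\varphi(U)\cap R$ for a regular locally convex $R\subseteq\R^d$; the interior of this set in $\varphi(U)$ is $\varphi(C^\circ\cap U)$, and for a local representative $\psi\circ g\circ\varphi^{-1}$ on $\varphi(C\cap U)$ \emph{both} notions unwind to the identical requirement of Bastiani-smoothness on that interior together with continuous extension of all iterated differentials to $\varphi(C\cap U)$. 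Chart-independence on the ambient side (Lemma~\ref{lem: reschain}, condition~1) and on the rough-boundary side (Lemma~\ref{lem: reschain}, condition~2) lets one restrict to this common sub-atlas of submanifold charts, so the two notions of smoothness agree; thus $\Gamma_c(C,E)=\mathcal D_{\iota_C}(C,E)=\Gamma_c(C,\iota_C^*E)$ as vector spaces, with $\iota$ realising the identification.

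Next I would observe $\res_C=\iota\circ p_C$: for $\sigma\in\Gamma_c(M,E)$ the pullback section $p_C(\sigma)=\sigma\circ\iota_C$ is carried by $\iota$ to $x\mapsto\sigma(x)$ on $C$, which is $\sigma|_C=\res_C(\sigma)$. By Proposition~\ref{prop: Whit:mfd} the topology on $\Gamma_c(C,E)$ is the quotient topology for $\res_C$, and by Proposition~\ref{prop: quot:secsmfd} the map $p_C$ is a topological quotient map. Hence, for $W\subseteq\Gamma_c(C,E)$, we have $W$ open $\iff\res_C^{-1}(W)$ open $\iff p_C^{-1}(\iota^{-1}(W))$ open $\iff\iota^{-1}(W)$ open, so $\iota$ is a linear homeomorphism and the two locally convex spaces coincide.

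I expect the only substantial step to be the comparison of the two a priori different notions of smoothness for a map $C\to E$, where the full-dimensionality of $C$ (so that its submanifold charts are genuine $M$-charts) and the precise form of the two ``$C^k$ on a subset with dense interior'' definitions have to be matched up carefully; the identity $\res_C=\iota\circ p_C$ and the comparison of quotient topologies are then purely formal.
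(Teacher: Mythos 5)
Your proposal is correct and takes essentially the same route as the paper: identify the two section spaces as sets via the canonical bijection (the paper asserts this in one sentence from the embedded-submanifold property, where you carefully unwind the two smoothness notions in full-dimensional submanifold charts), and then conclude the topologies agree because $\res_C$ and $p_C$ are both quotient maps out of $\Gamma_c(M,E)$ that factor through one another by the identity. The only difference is one of detail, not of method.
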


\begin{proof}
Since $C$ is an embedded submanifold, a section in the pullback bundle is smooth if and only if it is a smooth as a mapping $C \rightarrow E$. Thus as sets we canonically identify $\Gamma_c (C,E) = \Gamma_c (C,\iota_C^*E)$.  
Now Proposition \ref{prop: quot:secsmfd} and Proposition \ref{prop: Whit:mfd} yield a commutative diagram 
\begin{displaymath}
\begin{xy}
\xymatrix{
    & \Gamma_c (M,E)\ar[ld]_{\res_C} \ar[rd]^{p_C} &\\   
    \Gamma_c (C,E) \ar[rr]^\id & &\ar@<1ex>[ll]^\id \Gamma_c (C,\iota_C^* E)}
\end{xy}
\end{displaymath} 
where the diagonal arrows are quotient mappings.
\end{proof}

\noindent
\textbf{Manifolds of mappings for non-compact source manifolds}\\
Assume that $C$ is a (sub-)manifold with corners which is possibly non-compact. 
Then the function space $C^\infty (C,N)$ can be endowed with an infinite-dimensional manifold structure which constructed similarly to the construction outlined in Section \ref{sect: mfdmap:rb}: One endows $C^\infty (C,N)$ withe the $\mathcal{FD}$-topology described in \cite{michor} (a Whitney type topology). 
In the boundaryless case \cite{HS17}, this topology is also called the fine very strong topology and therefore we denote by $C^\infty_{\text{fS}} (C,N)$ the function space with the fine very strong (=$\mathcal{FD}$-) topology.

Choosing a local addition on $N$, the construction of manifold charts is completely analogous to the construction outlined in \ref{setup: can:charts} with the notable exception that one has to restrict to $\Gamma_c (C,f^*TN)$ and one has to intersect $U_f$ with
$$\{g \in C^\infty (C,N) \mid \exists K \text{ compact, such that } \forall x \in C\setminus K\ f(x)=g(x)\}.$$
The rest of the construction is completely analogous to the one outlined in \ref{setup: can:charts} and yields the same structure as in Theorem \ref{thm: mm:rb} if $C$ is compact (note that we will thus also write $C^\infty_{fS} (C,N) = C^\infty_{\text{co}} (C,N)$ if $C$ is compact with rough boundary).

We are now ready to prove Theorem B which we restate here for the reader's convenience. Recall that $M$ is equipped with a Riemannian metric.

\begin{theorem}\label{thm: Bproof}
For $C\subset M$ a submanifold with corners, or compact and a submanifold with rough boundary, then the restriction map $\res_C^M \colon C^\infty_{\text{fS}} (M,N) \rightarrow C^\infty_{\text{fS}} (C,N)$ is a submersion of locally convex manifolds.
\end{theorem}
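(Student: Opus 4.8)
The plan is to check the submersion property at each point $f\in C^\infty_{\text{fS}}(M,N)$ by producing, from the canonical charts of Theorem~\ref{thm: mm:rb} (and its non-compact analogue for manifolds with corners), a chart around $f$ in which $\res_C^M$ becomes literally a projection out of a product. Write $g\coloneq\res_C^M(f)=f|_C$, which is smooth because $C$ is an initial submanifold. Recall that the canonical chart $\varphi_f\colon U_f\to V_f\subseteq\Gamma_c(M,f^*TN)$ sends $h\mapsto(\pi_N,\Sigma_N)^{-1}\circ(f,h)$ (identified via the bundle map with a section of $f^*TN$), and similarly $\varphi_g\colon U_g\to V_g\subseteq\Gamma_c(C,g^*TN)$. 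First I would verify that $\res_C^M(U_f)\subseteq U_g$ --- immediate from the defining conditions on $U_f$ and $U_g$, and in the fine very strong / $\mathcal{FD}$-topology case one additionally uses that $\iota_C$ is proper, so that agreement of $h$ with $f$ off a compact subset of $M$ restricts to agreement off a compact subset of $C$ --- and then compute the chart representation: for $h\in U_f$,
\[
\varphi_g\bigl(h|_C\bigr)=(\pi_N,\Sigma_N)^{-1}\circ(f|_C,h|_C)=\bigl((\pi_N,\Sigma_N)^{-1}\circ(f,h)\bigr)\big|_C=\res_C\bigl(\varphi_f(h)\bigr),
\]
so that in these charts $\res_C^M$ is represented by the \emph{linear} restriction of sections $\res_C\colon\Gamma_c(M,f^*TN)\to\Gamma_c(C,g^*TN)$. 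In particular $\res_C^M$ is continuous, hence smooth.

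Next I would identify the relevant function spaces. Since $C$ is a submanifold with corners, or compact and a submanifold with rough boundary, Proposition~\ref{prop: ident:sectsp} applies to $E=f^*TN$ and, via the canonical bundle identification $g^*TN=(f|_C)^*TN\cong\iota_C^*(f^*TN)$, gives an isomorphism of locally convex spaces $\Gamma_c(C,g^*TN)\cong\Gamma_c(C,f^*TN)$ compatible with $\res_C$. Moreover $C$ satisfies the cusp condition by Proposition~\ref{prop: rsub:nnf}, so Theorem~\ref{thmA} applies to the finite-rank bundle $f^*TN\to M$: the map $\res_C\colon\Gamma_c(M,f^*TN)\to\Gamma_c(C,f^*TN)$ admits a continuous linear section $\mathcal{E}_C^M$, and hence by Corollary~\ref{splitting consequences} the assignment
\[
\Gamma_c(M,f^*TN)\xrightarrow{\ \sim\ }\mathcal{I}_c(C,f^*TN)\times\Gamma_c(C,f^*TN),\qquad\sigma\longmapsto\bigl(\sigma-\mathcal{E}_C^M(\res_C\sigma),\,\res_C\sigma\bigr),
\]
is an isomorphism of locally convex spaces (with inverse $(\eta,\tau)\mapsto\eta+\mathcal{E}_C^M(\tau)$) under which $\res_C$ is the coordinate projection $\pr_2$.

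Composing $\varphi_f$ with this linear isomorphism produces a chart $\tilde\varphi_f$ around $f$, modelled on $\mathcal{I}_c(C,f^*TN)\times\Gamma_c(C,f^*TN)$, in which, by the two displays above, $\varphi_g\circ\res_C^M\circ\tilde\varphi_f^{-1}=\pr_2$. Replacing $U_f$ by $\tilde\varphi_f^{-1}(W_1\times W_2)$ for a basic product neighbourhood $W_1\times W_2$ of $\tilde\varphi_f(f)=(0,0)$, and correspondingly $U_g$ by $\varphi_g^{-1}(W_2)$, then yields a genuine submersion chart in the sense of \cite{1502.05795v4} (note that $\pr_2$ maps $W_1\times W_2$ \emph{onto} $W_2$, so the domains match up); since this can be arranged at every $f$, $\res_C^M$ is a submersion. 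The two cases enter only through which construction equips $C^\infty_{\text{fS}}(C,N)$ with its manifold structure (Michor's construction \cite{michor} in the corners case, Section~\ref{sect: mfdmap:rb} in the compact rough-boundary case) and which earlier result certifies that $\res_C^M$ is well defined (Proposition~\ref{prop: fcomp:cont}, respectively properness of $\iota_C$ together with \cite[Theorem 7.3]{michor}); the splitting argument itself is uniform. I expect the only real difficulty to lie in this first step --- making precise that the canonical-chart domains and the fine-very-strong/$\mathcal{FD}$ conditions defined on $M$ restrict compatibly to $C$ --- since all the genuine functional-analytic content, namely the existence of a continuous linear extension operator, has already been isolated in Theorem~A.
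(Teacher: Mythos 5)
Your proposal is correct and follows essentially the same route as the paper: conjugating $\res_C^M$ through the canonical charts to the linear restriction map on section spaces, identifying the target via Proposition~\ref{prop: ident:sectsp}, and invoking Proposition~\ref{prop: rsub:nnf}, Theorem~A and Corollary~\ref{splitting consequences} to split it as a projection onto a complemented closed subspace. Your version is in fact slightly more explicit than the paper's about checking $\res_C^M(U_f)\subseteq U_g$ and the chart-level computation, but the substance is identical.
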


\begin{proof}
Let $\iota_C \colon C\rightarrow M$ be the canonical inclusion, which is smooth as $C$ is an embedded submanifold. Hence $\res_C^M=\iota_C^*$ is smooth by \cite[Theorem 7.3]{michor} (if $C$ is a submanifold with corners). 
Since the compact-open $C^\infty$ topology is coarser than the fine very strong topology (cf.\ \cite{HS17}), Proposition \ref{prop: fcomp:cont} implies that $\res_C^M$ is continuous if $C$ is compact and a submanifold with rough boundary (note that $M$ might be non-compact).
Hence to establish smoothness and the submersion property, it suffices to construct submersion charts for $\res_C^M$.

Let now $F\in C^\infty (M,N)$ and $f \coloneq \res_C^M (F)$. Then we use that $C$ satisfies the cusp condition and consider the canonical charts $(U_F,\varphi_F)$ and $(U_f,\varphi_f)$ (cf.\ \ref{setup: can:charts}) to obtain a commutative diagram
\begin{displaymath}
\xymatrix{
  C^\infty_{\text{fS}} (M,N)  \supseteq U_F \ar[d]_{\res_C^M}\ar[r]^-{\varphi_F} & \Gamma_c (M,F^*E) \ar[r]^-{\cong}_-{\text{Cor }\ref{splitting consequences}} \ar[d]^{\res_C}&  \mathcal{I}_c (C,M) \oplus \Gamma_c (C,F^*TN) \ar[d]^{\pr_2} \\ 
  C^\infty_{\text{fS}} (C,N) \supseteq U_f \ar[r]^-{\varphi_f} & \Gamma_c (C,f^*TN) \ar[r]^-\cong_-{\text{Prop } \ref{prop: ident:sectsp}} &  \Gamma_c (C,F^*TN)
    }
\end{displaymath}
Observe that $\res_C^M$ is a smooth submersion as the canonical charts conjugate it to a projection onto a complemented closed subspace, which is continuous  linear.
\end{proof}

Note that Corollary D from the introduction follows from the results in this section and Corollary C in the wash as Lemma \ref{lem: sconv:rbsmfd} asserts that strongly convex regular closed subsets of Riemannian manifolds are submanifolds with rough boundary.

\appendix 
 \section{ Essentials on infinite-dimensional calculus and function spaces}\label{app: calculus}
 In this appendix we collect the necessary background on the theory of manifolds
that are modelled on locally convex spaces and how spaces of smooth maps can be
equipped with such a structure. Let us first recall some basic facts concerning
differential calculus in locally convex spaces.
  \subsection*{Calculus in locally convex spaces}

 We base our investigation on the so called Bastiani calculus \cite{bastiani} and our exposition here follows \cite{MR1911979,MR2261066}. 

\begin{definition}\label{defn: deriv} Let $E, F$ be locally convex spaces, $U \subseteq E$ be an open subset,
$f \colon U \rightarrow F$ a map and $r \in \N_{0} \cup \{\infty\}$. If it
exists, we define for $(x,h) \in U \times E$ the directional derivative
$$df(x,h) \coloneq D_h f(x) \coloneq \lim_{t\rightarrow 0} t^{-1} \big(f(x+th) -f(x)\big).$$
We say that $f$ is $C^r$ if the iterated directional derivatives
\begin{displaymath}
d^{(k)}f (x,y_1,\ldots , y_k) \coloneq (D_{y_k} D_{y_{k-1}} \cdots D_{y_1}
f) (x)
\end{displaymath}
exist for all $k \in \N_0$ such that $k \leq r$, $x \in U$ and
$y_1,\ldots , y_k \in E$ and define continuous maps
$d^{(k)} f \colon U \times E^k \rightarrow F$. If $f$ is $C^\infty$ it is also
called smooth. We abbreviate $df \coloneq d^{(1)} f$ and for curves $c \colon I \rightarrow M$ on an interval $I$, we also write $\dot{c} (t) \coloneq \frac{\dd}{\dd t} c (t) \coloneq dc(t,1)$.
\end{definition}

We will frequently want to work with smooth mappings on non-open sets. Contrary to the treatment in the main body of the text we restrict ourselves here to regular sets which are locally convex.

\begin{definition}[Differentials
on non-open sets]\label{defn: nonopen}
\begin{enumerate}
\item A subset $U$ of a locally convex space $E$ is \emph{locally
convex} if every $x \in U$ has a convex neighborhood $V$ in $U$.
\item Let $U\subseteq E$ be a locally convex subset with dense interior and $F$ a locally convex space.
A continuous mapping $f \colon U \rightarrow F$ is called $C^r$ if
$f|_{U^\circ} \colon U^\circ \rightarrow F$ is $C^r$ and each of the
 $d^{(k)} (f|_{U^\circ}) \colon U^\circ \times E^k \rightarrow F$
admits a continuous extension
$d^{(k)}f \colon U \times E^k \rightarrow F$ (which is then necessarily
unique).
\end{enumerate}
\end{definition}

Note that for $C^r$-mappings on regular locally convex subsets the chain rule holds (whereas it becomes false in general without requiring local convexity, cf.\ Lemma \ref{lem: reschain}).
Hence there is an associated concept of locally
convex manifold with rough boundary.

 \subsection*{Topologies on function spaces with non-compact source}\addcontentsline{toc}{subsection}{Topologies on function spaces with non-compact source}\label{App: topo:funcsp}
 In this appendix we recall some basic facts on the topology of spaces of smooth sections in vector bundles over a non-compact manifold.
 For the rest of this section we let $M,N$ be finite-dimensional manifolds  and $p \colon E \rightarrow M$ be a vector bundle over $M$.
 Further, we denote by $\Gamma(M,E)$ the vector space of all smooth sections of the bundle and by $\Gamma_c (M,E) \subseteq \Gamma(M,E)$ the space of compactly supported sections.
 
 \begin{setup}\label{setup: topo:open}
 For the space of smooth mappings between manifolds with corners $C^\infty (M,N)$ we consider the so called $\mathcal{FD}$-topology or \emph{fine very strong topology} and write $C^\infty_{\text{fS}} (M,N)$ for the space endowed with this topology.
 This is a Whitney type topology controlling functions and their derivatives on locally finite families of compact sets. Before we describe a basis of the fine very strong topology, we have to construct a basis for the strong topology which we will then refine. To this end, we recall the construction of the so called basic neighborhoods (see \cite{HS17}). Consider $f$ smooth, $A$ compact, $\varepsilon >0$ together with a pair of charts $(U, \psi)$ and $(V,\varphi)$ such that $A \subseteq V$ and $\psi \circ f \circ \varphi^{-1}$ makes sense.
 Then we use multiindex notation \ref{defn: multiindex} to define an \emph{elementary $f$-neighborhood} $\mathcal{N}^{r} \left( f; A , \varphi,\psi,\epsilon \right)$ as
 $$
\left\{\substack{\displaystyle g \in C^\infty (M,N), \quad \psi \circ g|_A \quad\text{ makes sense,}\\ \displaystyle\sup_{\alpha \in \N_0^d, |\alpha|\leq r } 
 \sup_{x \in \varphi (A)}\lVert \partial^\alpha \psi \circ f \circ \varphi^{-1}(x) - \partial^\alpha\psi \circ g \circ \varphi^{-1}(x)\rVert < \varepsilon}\right\}.
 $$ 
 A basic neighborhood of $f$ arises now as the intersection of (possibly countably many) elementary neighborhoods $\mathcal{N}^{r} \left( f; A_i , \varphi_i,\psi_i,\epsilon_i \right)$ where the family $(V_i,\varphi_i)_{i\in I}$ is locally finite. We remark that basic neighborhoods form the basis of the very strong topology (see \cite{HS17} for more information). To obtain the fine very strong topology, one declares the sets  
 \begin{equation}\label{eq: cpt:nbhd}
 \{g \in C^\infty (M,N) \mid \exists K \subseteq M \text{ compact, s.t. } \forall x \in M\setminus K,\ g(x) =f(x) \} \tag{$\star$}
 \end{equation}
 to be open and constructs a subbase of the fine very strong topology as the collection of sets $\eqref{eq: cpt:nbhd}$ (where $f \in C^\infty (M,N)$) and the basic neighborhoods of the very strong topology. 
   
 Note that in \cite{HS17} the fine very strong topology was only considered for manifolds without boundary (and coincides with the $\mathcal{FD}$-topology, see \cite[Appendix C]{HS17}). For manifolds with corners, we refer to \cite{michor} for more information on this topology. 
  
 If $M$ is compact, all topologies mentioned above coincides with the compact open $C^\infty$-topology from Definition \ref{defn: topo:init}. Further, the fine-very strong topology turns $C^\infty (M,N)$ into an infinite-dimensional manifold (cf.\ \cite{michor} and \cite{HS17}).
 If $N = \R^n$ then the pointwise operations turn $C^\infty_{\text{fS}} (M,\R^n)$ into a locally convex vector space (which is in not a \Frechet space if $M$ is not compact).
 \end{setup}

We now turn to the space of compactly supported sections of a vector bundle.

\begin{setup}{Compactly supported sections of a vector bundle.}\label{setup: cs}
Let $p \colon E \rightarrow M$ be a finite rank vector bundle over the finite dimensional manifold $M$ (possibly with corners). We consider three spaces of sections 
\begin{align*}
\Gamma (M,E) &\coloneq \{f \in C^\infty (M,E) \mid p \circ \sigma = \id_M\},\\
\Gamma_K (M,E) &\coloneq \{f \in \Gamma (M,E) \mid \text{supp } f \subseteq K \text{for } K \subseteq M \text{ compact}\},\\
 \Gamma_c (M,E) &\coloneq \bigcup_{K \subseteq M \text{ compact}} \Gamma_K (M,E). \end{align*}
Endow $\Gamma (M,E)$, $\Gamma_K (M,E)$ and $\Gamma_c (M,E)$ with the subspace topology from $C^\infty_{\text{fS}} (M,E)$; we obtain locally convex vector spaces \cite[Proposition 4.8 and Remark 4.11]{michor}.
Moreover, we remark that by compactness of $K$, the topology on $\Gamma_K (M,E)$ coincides with the subspace topology induced by the compact open $C^\infty$-topology, whence one can prove that $\Gamma_K (M,E)$ is a \Frechet space (cf.\ e.g.\ \cite[Section 3.1]{MR1934608}). 
Further, by \cite[Proposition 4.8 and Remark 4.11]{michor} $\Gamma_c (M,E)$ is the inductive limit (in the category of locally convex spaces) of the \Frechet spaces $\Gamma_K (M,E)$, where $K$ runs through a compact exhaustion of $M$. Thus $\Gamma_c (M,E)$ is an (LF)-space.
\end{setup}  

If $M$ is a manifold without boundary, a different description of the topology on $\Gamma_c (M,E)$ is considered in \cite[Appendix F]{math/0408008v1}.
By the following lemma, this topology on \( \Gamma_c (M,E) \) is equivalent to the fine very strong subspace topology.
\begin{lemma}\label{lem: top:sect}
  If $M$ is a manifold without boundary, the following describe equivalent locally convex topologies on \( \Gamma_c (M,E) \):
  \begin{enumerate}
  \item Give \( \Gamma_c (M,E) \subseteq C^\infty_{\text{fS}}(M,E) \) the subspace topology.
    \item Let \( \cU =\left\lbrace U_i \right\rbrace_{i \in I} \) be a locally finite cover of \( M \) by relatively compact open subsets \( U_i \subseteq M \).
      Equip each \( \Gamma(U_i,E|_{U_i}) \subseteq C^\infty_{\text{co}} (U_i,E|_{U_i}) \) with the subspace topology, and give \( \Gamma_c (M,E) \) the topology induced by
      \begin{align*}
        \rho_{\cU} \colon \Gamma_c (M,E) \to \bigoplus_{i \in I} \Gamma(U_i,E|_{U_i}),
      \end{align*}
      where the direct sum is given the box topology, and \( \rho_i \colon \Gamma_c (M,E) \to \Gamma(U_i,E|_{U_i}) \) is the restriction map.
      \item For \( K \subseteq M \) compact, give 
      \begin{align*}
        \Gamma_K(M,E) = \left\lbrace s \in \Gamma(M,E) : \operatorname{supp} s \subseteq K \right\rbrace \subseteq C^\infty (M,E)_{\text{co}}
      \end{align*}
      the subspace topology.
      Now equip \( \Gamma_c (M,E) \) with the final locally convex vector space topology with respect to the inclusion maps $\iota_K \colon \Gamma_K(M,E) \to \Gamma_c (M,E)$ as \( K \) ranges through the compact subsets of \( M \).
  \end{enumerate}
\end{lemma}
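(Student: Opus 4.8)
Denote by $\tau_1,\tau_2,\tau_3$ the three candidate topologies on $\Gamma_c(M,E)$ in the order listed. The plan is to show $\tau_1=\tau_2$ by realising $\rho_{\cU}$ as a topological embedding, and then to close the loop via the two comparisons $\tau_3\supseteq\tau_1$ (which is formal) and $\tau_1\supseteq\tau_3$ (which falls out of the same embedding). Throughout fix a smooth partition of unity $(\chi_i)_{i\in I}$ subordinate to $\cU$ with $\supp\chi_i\subseteq U_i$; since each $U_i$ is relatively compact, each $\supp\chi_i$ is compact. Besides $\rho_{\cU}$, the second player is the \emph{reconstruction map}
\[
  R\colon \bigoplus_{i\in I}\Gamma(U_i,E|_{U_i})\longrightarrow \Gamma_c(M,E),\qquad
  R\big((s_i)_i\big)=\sum_{i\in I}\widehat{\chi_i s_i},
\]
where $\widehat{\chi_i s_i}\in\Gamma_c(M,E)$ denotes the extension by zero of $\chi_i\cdot s_i\in\Gamma(U_i,E|_{U_i})$ (smooth because $\supp\chi_i\subseteq U_i$). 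The sum is locally finite, $R$ is linear, and $\sum_i\chi_i\equiv 1$ gives $R\circ\rho_{\cU}=\id_{\Gamma_c(M,E)}$.

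First I would dispose of the routine points: a section with compact support meets only finitely many $U_i$ by local finiteness of $\cU$, so $\rho_{\cU}$ lands in the algebraic direct sum, and $\rho_{\cU}$ is injective because $\cU$ covers $M$. Next, continuity of $\rho_{\cU}\colon(\Gamma_c(M,E),\tau_1)\to\bigoplus_i\Gamma(U_i,E|_{U_i})$ for the box topology. A basic $0$-neighbourhood there is a product $\bigoplus_i V_i$, each $V_i$ a $0$-neighbourhood of the Fréchet space $\Gamma(U_i,E|_{U_i})$ with its compact-open $C^\infty$ topology, hence cut out by controlling finitely many derivatives to within some $\varepsilon_i>0$ on finitely many compacta of $U_i$ (read off in finitely many charts of $U_i$ and trivialisations of $E|_{U_i}$). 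Because $\overline{U_i}$ is compact and $\cU$ is locally finite, the family of all these compacta, over $i\in I$, is locally finite in $M$; with the attached bounds it therefore assembles into a single basic $0$-neighbourhood for the (fine) very strong topology on $C^\infty_{\mathrm{fS}}(M,E)$, whose intersection with $\Gamma_c(M,E)$ is by construction carried into $\bigoplus_i V_i$ by $\rho_{\cU}$. (On $\Gamma_c(M,E)$ the very strong and fine very strong topologies coincide, since any two compactly supported sections agree outside a compact set, so the $(\star)$-sets of \ref{setup: topo:open} are vacuous here.)

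The crux is the continuity of $R$, which I would prove with the target carrying $\tau_1$ and note verbatim for the target carrying $\tau_3$. Each component $R_i\colon\Gamma(U_i,E|_{U_i})\to\Gamma_c(M,E)$, $s_i\mapsto\widehat{\chi_i s_i}$, is multiplication by the fixed $\chi_i$ followed by extension by zero; it takes values in the Fréchet space $\Gamma_{\supp\chi_i}(M,E)$ and is continuous into $C^\infty_{\mathrm{fS}}(M,E)$ (multiplication by a fixed smooth function and extension by zero being continuous for the compact-open $C^\infty$ topologies, as already used in the proof of Lemma~\ref{lem: sm:mix}), and likewise into $(\Gamma_c(M,E),\tau_3)$ since $\Gamma_{\supp\chi_i}(M,E)=\Gamma_K(M,E)$ for $K=\supp\chi_i$ and $\iota_K$ is continuous by definition of $\tau_3$. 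Now let $W$ be any absolutely convex $0$-neighbourhood in the target and set $V_i\coloneq R_i^{-1}(2^{-i}W)$, a $0$-neighbourhood in $\Gamma(U_i,E|_{U_i})$ by continuity of $R_i$. If $(s_i)_i\in\bigoplus_i V_i$ has (necessarily finite) support $F\subseteq I$, then absolute convexity gives $R\big((s_i)_i\big)=\sum_{i\in F}R_i(s_i)\in\sum_{i\in F}2^{-i}W\subseteq W$, so $R$ is continuous. Combining: $\rho_{\cU}$ is continuous on $(\Gamma_c(M,E),\tau_1)$ with continuous left inverse $R$ into $(\Gamma_c(M,E),\tau_1)$, hence a homeomorphism onto its image; as $\tau_2$ is the initial topology induced by $\rho_{\cU}$, we get $\tau_2=\tau_1$. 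Furthermore $R\circ\rho_{\cU}=\id$ together with $R$ continuous into $(\Gamma_c(M,E),\tau_3)$ and $\rho_{\cU}$ continuous into the box topology shows $\id\colon(\Gamma_c(M,E),\tau_1)\to(\Gamma_c(M,E),\tau_3)$ continuous, i.e.\ $\tau_1\supseteq\tau_3$; and $\tau_3\supseteq\tau_1$ holds because $\tau_1$ is a locally convex vector topology on $\Gamma_c(M,E)$ for which every inclusion $\Gamma_K(M,E)\hookrightarrow\Gamma_c(M,E)$ is continuous (on $\Gamma_K(M,E)$ the compact-open $C^\infty$ topology equals the subspace topology from $C^\infty_{\mathrm{fS}}(M,E)$ by compactness of $K$, cf.\ \ref{setup: cs}), while $\tau_3$ is the finest such. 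Thus $\tau_1=\tau_2=\tau_3$.

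The genuine obstacle is the continuity of $\rho_{\cU}$ out of the fine very strong topology: one must check carefully that the ``finitely many seminorms per factor, but all factors simultaneously'' datum of a box neighbourhood repackages as a single locally finite family of compacta with derivative bounds — the step where relative compactness of the $U_i$ and local finiteness of $\cU$ are both used. The reverse direction, through the reconstruction map, is comparatively soft once one has the geometric-series device $V_i=R_i^{-1}(2^{-i}W)$, which is exactly what is needed to tame a linear map defined on a box-topology direct sum; the comparison with $\tau_3$ is then purely formal.
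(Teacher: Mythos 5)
Your proof is correct, but it is a genuinely different (and much more self-contained) route than the one in the paper. The paper disposes of the lemma in two sentences by citation: it observes that the topology in item 3 is the locally convex inductive limit of the Fr\'echet spaces $\Gamma_K(M,E)$, so that $1=3$ is exactly the statement recalled in \ref{setup: cs} from Michor's book, and $2=3$ is \cite[F.19]{math/0408008v1}. You instead prove $1=2$ directly by exhibiting $\rho_\cU$ as a topological embedding with the continuous left inverse $R$ built from a partition of unity, and then get $1=3$ from the universal property of the inductive limit in one direction and from $\id=R\circ\rho_\cU$ in the other. What your approach buys is a complete argument in place of two external references (and it makes visible exactly where relative compactness of the $U_i$ and local finiteness of $\cU$ enter, namely in repackaging a box neighbourhood as a single locally finite family of compacta with derivative bounds); what the paper's approach buys is brevity, since the patching argument you carry out is essentially the content of Gl\"ockner's F.19. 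Two small points to make explicit: your use of the weights $2^{-i}$ presumes the index set is countable, which holds here because $M$ is $\sigma$-compact and $\cU$ is a locally finite cover by relatively compact (hence only countably many nonempty) sets; and your argument for $\tau_3\subseteq\tau_1$ via continuity of $R$ into $(\Gamma_c(M,E),\tau_3)$ uses that $\tau_3$ is a locally convex vector topology, i.e.\ it reads item 3 as the locally convex inductive limit rather than the bare topological final topology --- but this is exactly the reading the paper itself adopts, and is forced by the lemma's assertion that all three are locally convex topologies.
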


\begin{proof}
  Note that the topology described in 3.\ is the locally convex inductive limit topology induced by the inductive system $\{\iota_K\}_{K \subseteq M \text{ is compact}}$ (ordered by inclusion of compact sets). Hence the topologies described in 1.\ and 3. coincide by \ref{setup: cs}. However, also the topologies 2.\ and 3.\ coincide by \cite[F.19]{math/0408008v1}.
\end{proof}

\begin{remark}\label{rem: loc:sect}
Assume that the sets $U_i$ from Lemma \ref{lem: top:sect} 2. are domains of manifold charts $\varphi_i \colon U_i \rightarrow \R^m$ of $M$. Then we define for each $X \in \Gamma(U_i,E|_{U_i})$ the local representative
$$X_{\varphi_i} \coloneq \pr_2 \circ T\varphi_i \circ X \varphi_i^{-1} \in C^\infty (\varphi_i (U_i) , \R^m),$$
where $\pr_2 \colon \varphi_i (U_i) \times \R^m \rightarrow \R^m$ is the canonical projection. This mapping yields an isomorphism of locally convex spaces
$$\Gamma (U_i,E|_{U_i}) \rightarrow C^\infty_{\text{co}} (\varphi_i (U_i),\R^m),\quad  X \mapsto X_{\varphi_i}.$$
\end{remark}

\section{ The space of (smooth) Whitney jets}\label{app: Whitney}

In this appendix we recall some details from Whitney's approach to the the extension problem for smooth functions on a closed subset of $\R^n$. Though the exposition in the main part of the article does not need these results as such (since we will only cite their consequences from \cite{MR2300454}); the authors think that a quick recollection of these constructions will be beneficial to understand the underlying ideas. Our exposition follows here \cite[Section 2]{MR2300454}; we mention that a more in-depth treatment can be found in \cite[Chapter 2]{MR2882877}.

 \begin{setup}\label{defn: multiindex}
 Throughout this section we use standard multiindex notation,\\ i.e.\ $\alpha = (\alpha_1,\alpha_2,\ldots, \alpha_d) \in \N_0^d$, $|\alpha| = \sum_{i=1}^d \alpha_i$, $\alpha! = \alpha_1! \cdots \alpha_d!$ and $\partial^\alpha = \partial_1^{\alpha_1} \cdots \partial_d^{\alpha_d}$.
 \end{setup}
 
 \begin{definition}
  Let $K \subseteq \R^d$ be compact. For a family $f = (f^\alpha) \in \prod_{\alpha \in \N^d_0} C(K)$ and $x \in K$ we define the formal Taylor polynomial 
  $$\Tay_x^m f (y) \coloneq \sum_{|\alpha|\leq m} \frac{f^{\alpha}(x)}{\alpha!} (y-x)^\alpha. $$
The formal Taylor remainder $R^m_x f \in \prod_{|\alpha| \leq m} C(K)$ is then defined by 
  $$(R_x^m f)^\alpha \coloneq f^\alpha - \partial^\alpha (\Tay_x^{m}f)|_{K} = f^\alpha - \Tay^{m-|\alpha|}_x (f^{\alpha +\beta})_{|\beta| \leq m -\alpha}|_K,\ |\alpha| \leq m.$$ 
  \end{definition}
  
  \begin{setup}\label{setup: remainder:id}
    For $|\alpha| < m$ one easily checks the identities 
   \begin{align*}
   \partial^\alpha \Tay_x^m f = \Tay_x^{m-|\alpha|} ((f^{\alpha+\beta})_{\beta \in \N_0^{m -|\alpha|}}) \\
   (R_x^m f)^\alpha = (R_x^{m-|\alpha|} (f^{\alpha +\beta})_{\beta \in \N_0^{m -|\alpha|}})^0
   \end{align*}
  \end{setup}
  
  We now define seminorms on the spaces of jets which allow us to define a \Frechet topology on the space of Whitney jets (cf.\ Definition \ref{defn: Whitneyjet} below).
  
  \begin{definition}
  For $f = (f^\alpha) \in \prod_{\alpha \in \N_0^d} C(K)$ and $m \in \N_0$ we define the seminorms
  \begin{align*}
  |f|_{m,K} \coloneq \sup_{x \in K} \sup_{|\alpha| \leq m} |f^\alpha (x)|.
  \end{align*}
  \end{definition}
  
  The seminorms $|\cdot|_{m,K}$ are closely connected to the compact open $C^\infty$-topology on the space $C^\infty (U, \R)$ as we will discuss in \ref{Whitney:fun} below. 
    \begin{definition}
 For $f = (f^\alpha) \in \prod_{\alpha \in \N_0^d} C(K)$, $m \in \N_0$ and $t>0$, we define 
  \begin{align*}&q_m (f,t) \coloneq q_m (K,f,t)\\ \coloneq& \sup \{|R_x^m (f)^\alpha (y)||y-x|^{|\alpha| - m} \colon x,y \in K, 0 < |x-y|\leq t, |\alpha|\leq m\}
  \end{align*}
  and 
  \begin{align*}
  \lVert \cdot\rVert_{m,K} \coloneq |\cdot|_{m,K} + \sup_t q_m (K, \cdot ,t)
  \end{align*}
  \end{definition}
  
  \begin{setup}\label{Whitney:fun}
 Let $U \subseteq \R^d$ be open and $K_1 \subseteq K_2 \subseteq \ldots$ a fundamental sequence of compact sets.\footnote{i.e.\ for all $n\geq 1$, $K_n \subseteq K_{n+1}^\circ$ and $U = \bigcup_l K_l$.} 
  Then we recall from Remark \ref{rem: coinfty} that the compact open $C^\infty$-topology is initial with respect to the map 
  $$\partial \colon C^\infty (U,\R) \rightarrow \prod_{\alpha \in \N_0^d} C(U),\quad f \mapsto (\partial^\alpha f).$$
 Pulling back the seminorms on the jet space by the mappings $\partial (\cdot)|_{K_l}$, we obtain two families of seminorms:
  \begin{enumerate}
  \item $\{|\partial (\cdot)|_{K_l}|_{m,K_l} \mid l \in \N, m \in \N_0\}$. These seminorms are the classical seminorms which induce the compact open $C^\infty$-topology (cf.\ e.g.\ \cite{MR1934608,MR2261066}).
  \item  $\{\lVert \partial (\cdot)|_{K_l} \rVert_{m,K_l} \mid l \in \N, m\in \N_0\}$. Also these seminorms induce the compact open $C^\infty$-topology (this is easily seen by the usual estimate for the $m$th Taylor remainder using the $m+1$st derivative on closed balls covering $K_l$) (see e.g.\ \cite[\S2]{MR2300454}).
  Further we notice that $f \in \prod_{|\alpha| \leq m} C(U)$ is contained in the image of $\partial$ if and only if $\lim_{t \rightarrow 0} q_m (K,f,t)=0$ for all $m\in \N_0$ and $K\subseteq U$ compact.
  \end{enumerate}
 \end{setup}
 
 These considerations lead to the following definition.
 
 \begin{definition}\label{defn: Whitneyjet}
 Let $K \subseteq \R^d$ be compact. We say $f \in \prod_{\alpha \in \N_0^d} C(K)$ is a \emph{Whitney jet (of order $\infty$)}, if $\lim_{t \rightarrow 0} q_m (K,f,t)=0$ for all $m \in \N$.
 We denote the \emph{\Frechet space of all Whitney jets (of order $\infty$)} equipped with the seminorms $\lVert \cdot\rVert_m, m\in \N$ by $\mathcal{E}(K)$.
 \end{definition}

    Since smooth functions on an open set $U$ restrict to Whitney jets on every $K \subseteq U$ compact (cf.\ \cite[p.\ 125]{MR2300454}), we have (as sets)
   \begin{equation}\label{eq: projlim}
    C^\infty (U,\R) = \proj_n \mathcal{E}(K_n)
   \end{equation}
   for every fundamental sequence of compact sets $K_1 \subseteq K_2\subseteq \ldots \subseteq U$.
   However, we can also view this limit in the category of locally convex spaces and it follows from Remark \ref{Whitney:fun} that the locally convex topology on the left hand side of \eqref{eq: projlim} coincides with the compact-open $C^\infty$-topology described in Definition \ref{defn: topo:init} (cf.\ Remark \ref{rem: coinfty}).
   
 \begin{definition}\label{defn: closed}
 Let $U \subseteq \R^d$ be an open set with $(K_l)_{l\in \N}$ a fundamental sequence of compact sets and $F \subseteq U$ a closed set.
 Then we define the \emph{space of Whitney jets on $F$} as the projective limit
 $\mathcal{E}(F) \coloneq \proj_{l \in \N} \mathcal{E}(F\cap K_l)$ (in the category of locally convex spaces).
 \end{definition}

Note that the definition of $\mathcal{E}(F)$ is independent of the choice of fundamental sequence used to define it. 
 
 Using the canonical identification, we have $C^\infty_{\text{co}}  (U,
 \R^m) \cong C^\infty_{\text{co}} (U,\R)^m$ as \Frechet spaces. Hence it makes sense to define Whitney jets of vector valued functions as the Whitney jets of the components of the functions:
 
 \begin{definition}\label{defn: WJm}
 Define $\mathcal{E}(K,\R^m)$ to be the closed subspace of the product $\prod_{\alpha \in \N_0^d} C(K,\R^m)$ which corresponds to $\mathcal{E}(K)^m$ under the identification $\prod_{\alpha \in \N_0^d} C(K,\R^m) \cong (\prod_{\alpha \in \N_0^d} C(K))^m$.  
 Similarly for $(K_l)_l$ and $F$ as in Definition \ref{defn: closed} we define $\mathcal{E}(F,\R^m) \coloneq \proj_{l\in \N} \mathcal{E} (K_l,\R^m)$.
 \end{definition}

\nocite{*}
\bibliographystyle{cdraifplain}
\bibliography{extension}
\end{document}